\documentclass[12pt]{article}
\usepackage{latexsym, amssymb, amsthm}
\usepackage{dsfont}

\textheight=25cm
\textwidth = 6.375 true in
\topmargin=-16mm
\marginparsep=0cm
\oddsidemargin=-0.7cm
\evensidemargin=-0.7cm
\headheight=13pt
\headsep=0.8cm
\parskip=0pt
\baselineskip=27pt
\hfuzz=4pt
\widowpenalty=10000

\DeclareMathAlphabet\gothic{U}{euf}{m}{n}

\setlength{\marginparwidth}{1 true in}

%%%%
%%%% The next lines remove the column space in an eqnarray
%%%%

\makeatletter
\def\eqnarray{\stepcounter{equation}\let\@currentlabel=\theequation
\global\@eqnswtrue
\tabskip\@centering\let\\=\@eqncr
$$\halign to \displaywidth\bgroup\hfil\global\@eqcnt\z@
  $\displaystyle\tabskip\z@{##}$&\global\@eqcnt\@ne
  \hfil$\displaystyle{{}##{}}$\hfil
  &\global\@eqcnt\tw@ $\displaystyle{##}$\hfil
  \tabskip\@centering&\llap{##}\tabskip\z@\cr}

\def\endeqnarray{\@@eqncr\egroup
      \global\advance\c@equation\m@ne$$\global\@ignoretrue}

\def\@yeqncr{\@ifnextchar [{\@xeqncr}{\@xeqncr[5pt]}}
\makeatother

\begin{document}
\bibliographystyle{tom}

\newtheorem{lemma}{Lemma}[section]
\newtheorem{thm}[lemma]{Theorem}
\newtheorem{cor}[lemma]{Corollary}
\newtheorem{prop}[lemma]{Proposition}

\theoremstyle{definition}

\newtheorem{remark}[lemma]{Remark}
\newtheorem{exam}[lemma]{Example}
\newtheorem{definition}[lemma]{Definition}

\newcommand{\gota}{\gothic{a}}
\newcommand{\gotb}{\gothic{b}}
\newcommand{\gotc}{\gothic{c}}
\newcommand{\gote}{\gothic{e}}
\newcommand{\gotf}{\gothic{f}}
\newcommand{\gotg}{\gothic{g}}
\newcommand{\gothh}{\gothic{h}}
\newcommand{\gotk}{\gothic{k}}
\newcommand{\gotm}{\gothic{m}}
\newcommand{\gotn}{\gothic{n}}
\newcommand{\gotp}{\gothic{p}}
\newcommand{\gotq}{\gothic{q}}
\newcommand{\gotr}{\gothic{r}}
\newcommand{\gots}{\gothic{s}}
\newcommand{\gott}{\gothic{t}}
\newcommand{\gotu}{\gothic{u}}
\newcommand{\gotv}{\gothic{v}}
\newcommand{\gotw}{\gothic{w}}
\newcommand{\gotz}{\gothic{z}}
\newcommand{\gotA}{\gothic{A}}
\newcommand{\gotB}{\gothic{B}}
\newcommand{\gotG}{\gothic{G}}
\newcommand{\gotL}{\gothic{L}}
\newcommand{\gotS}{\gothic{S}}
\newcommand{\gotT}{\gothic{T}}

\newcounter{teller}
\renewcommand{\theteller}{(\alph{teller})}
\newenvironment{tabel}{\begin{list}%
{\rm  (\alph{teller})\hfill}{\usecounter{teller} \leftmargin=1.1cm
\labelwidth=1.1cm \labelsep=0cm \parsep=0cm}
                      }{\end{list}}

\newcounter{tellerr}
\renewcommand{\thetellerr}{(\roman{tellerr})}
\newenvironment{tabeleq}{\begin{list}%
{\rm  (\roman{tellerr})\hfill}{\usecounter{tellerr} \leftmargin=1.1cm
\labelwidth=1.1cm \labelsep=0cm \parsep=0cm}
                         }{\end{list}}

\newcounter{tellerrr}
\renewcommand{\thetellerrr}{(\Roman{tellerrr})}
\newenvironment{tabelR}{\begin{list}%
{\rm  (\Roman{tellerrr})\hfill}{\usecounter{tellerrr} \leftmargin=1.1cm
\labelwidth=1.1cm \labelsep=0cm \parsep=0cm}
                         }{\end{list}}

\newcounter{proofstep}
\newcommand{\nextstep}{\refstepcounter{proofstep}\vertspace \par 
          \noindent{\bf Step \theproofstep} \hspace{5pt}}
\newcommand{\firststep}{\setcounter{proofstep}{0}\nextstep}

\newcommand{\Ni}{\mathds{N}}
\newcommand{\Qi}{\mathds{Q}}
\newcommand{\Ri}{\mathds{R}}
\newcommand{\Ci}{\mathds{C}}
\newcommand{\Ti}{\mathds{T}}
\newcommand{\Zi}{\mathds{Z}}
\newcommand{\Fi}{\mathds{F}}

\renewcommand{\proofname}{{\bf Proof}}

\newcommand{\vertspace}{\vskip10.0pt plus 4.0pt minus 6.0pt}

\newcommand{\simh}{{\stackrel{{\rm cap}}{\sim}}}
\newcommand{\ad}{{\mathop{\rm ad}}}
\newcommand{\Ad}{{\mathop{\rm Ad}}}
\newcommand{\alg}{{\mathop{\rm alg}}}
\newcommand{\clalg}{{\mathop{\overline{\rm alg}}}}
\newcommand{\Aut}{\mathop{\rm Aut}}
\newcommand{\arccot}{\mathop{\rm arccot}}
\newcommand{\capp}{{\mathop{\rm cap}}}
\newcommand{\rcapp}{{\mathop{\rm rcap}}}
\newcommand{\diam}{\mathop{\rm diam}}
\newcommand{\divv}{\mathop{\rm div}}
\newcommand{\dom}{\mathop{\rm dom}}
\newcommand{\codim}{\mathop{\rm codim}}
\newcommand{\RRe}{\mathop{\rm Re}}
\newcommand{\IIm}{\mathop{\rm Im}}
\newcommand{\tr}{{\mathop{\rm Tr \,}}}
\newcommand{\Tr}{{\mathop{\rm Tr \,}}}
\newcommand{\Vol}{{\mathop{\rm Vol}}}
\newcommand{\card}{{\mathop{\rm card}}}
\newcommand{\rank}{\mathop{\rm rank}}
\newcommand{\supp}{\mathop{\rm supp}}
\newcommand{\sgn}{\mathop{\rm sgn}}
\newcommand{\essinf}{\mathop{\rm ess\,inf}}
\newcommand{\esssup}{\mathop{\rm ess\,sup}}
\newcommand{\Int}{\mathop{\rm Int}}
\newcommand{\lcm}{\mathop{\rm lcm}}
\newcommand{\loc}{{\rm loc}}
\newcommand{\HS}{{\rm HS}}
\newcommand{\Trn}{{\rm Tr}}
\newcommand{\n}{{\rm N}}
\newcommand{\WOT}{{\rm WOT}}

\newcommand{\at}{@}

\newcommand{\mod}{\mathop{\rm mod}}
\newcommand{\spann}{\mathop{\rm span}}
\newcommand{\one}{\mathds{1}}

\hyphenation{groups}
\hyphenation{unitary}

\newcommand{\tfrac}[2]{{\textstyle \frac{#1}{#2}}}

\newcommand{\ca}{{\cal A}}
\newcommand{\cb}{{\cal B}}
\newcommand{\cc}{{\cal C}}
\newcommand{\cd}{{\cal D}}
\newcommand{\ce}{{\cal E}}
\newcommand{\cf}{{\cal F}}
\newcommand{\ch}{{\cal H}}
\newcommand{\chs}{{\cal HS}}
\newcommand{\ci}{{\cal I}}
\newcommand{\ck}{{\cal K}}
\newcommand{\cl}{{\cal L}}
\newcommand{\cm}{{\cal M}}
\newcommand{\cn}{{\cal N}}
\newcommand{\co}{{\cal O}}
\newcommand{\cp}{{\cal P}}
\newcommand{\cs}{{\cal S}}
\newcommand{\ct}{{\cal T}}
\newcommand{\cx}{{\cal X}}
\newcommand{\cy}{{\cal Y}}
\newcommand{\cz}{{\cal Z}}

\thispagestyle{empty}

\vspace*{1cm}
\begin{center}
{\Large\bf H\"older kernel estimates for Robin operators  \\[1mm]
and Dirichlet-to-Neumann operators} \\[10mm]

\large A.F.M. ter Elst and M.F. Wong

\end{center}

\vspace{5mm}

\begin{center}
{\bf Abstract}
\end{center}

\begin{list}{}{\leftmargin=1.7cm \rightmargin=1.7cm \listparindent=10mm 
   \parsep=0pt}
\item
Consider the elliptic operator
\[
A = - \sum_{k,l=1}^d \partial_k \, c_{kl} \, \partial_l
   + \sum_{k=1}^d a_k \, \partial_k
   - \sum_{k=1}^d \partial_k \, b_k
   + a_0
\]
on a bounded connected open set $\Omega \subset \Ri^d$ with Lipschitz 
boundary conditions,
where $c_{kl} \in L_\infty(\Omega,\Ri)$ and 
$a_k,b_k,a_0 \in L_\infty(\Omega,\Ci)$, subject to 
Robin boundary conditions $\partial_\nu u + \beta \, \Tr u = 0$, 
where $\beta \in L_\infty(\partial \Omega, \Ci)$ is complex valued.
Then we show that the kernel of the semigroup generated by $-A$
satisfies Gaussian estimates and H\"older Gaussian estimates.
If all coefficients and the function $\beta$ are real valued, 
then we prove Gaussian lower bounds.

Finally, if $\Omega$ is of class $C^{1+\kappa}$ with $\kappa > 0$,
$c_{kl} = c_{lk}$ is H\"older continuous, $a_k = b_k = 0$ 
and $a_0$ is real valued,
then we show that the kernel of the semigroup associated to the 
Dirichlet-to-Neumann operator corresponding to $A$ has H\"older 
Poisson bounds.
\end{list}

\vspace{1cm}
\noindent
AMS Subject Classification: 35K05, 35B45, 35J25.

\vspace{5mm}
\noindent
Keywords: 
Robin boundary conditions, Dirichlet-to-Neumann operator,
heat kernel estimates.

\vspace{15mm}

\noindent
{\bf Home institution:}    \\[3mm]
\begin{tabular}{@{}l}
 Department of Mathematics   \\
 University of Auckland   \\
 Private bag 92019  \\
 Auckland  \\
 New Zealand  
\end{tabular}

\newpage

\section{Introduction} \label{Srobin1}

Second-order strongly elliptic operators in divergence form with 
real measurable bounded coefficients, subject to Dirichlet, Neumann
or mixed boundary conditions, are well studied on a bounded Lipschitz
domain~$\Omega$.
The kernel of the associated semigroup satisfies Gaussian kernel bounds
\cite{Aro}, \cite{Dav2}, \cite{AE1}, \cite{Daners}.
For Dirichlet and Neumann boundary conditions
it was proved that the kernel is even H\"older continuous
with the appropriate H\"older Gaussian bounds \cite{AT4}.
Recently, also H\"older Gaussian kernel bounds have been proved for operators
with mixed boundary conditions \cite{ERe2}.
The situation is different if the operator has Robin boundary conditions
$\partial_\nu u + \beta \, u = 0$,
where $\beta \in L_\infty(\partial \Omega)$.
If $\beta \geq 0$, then Gaussian kernel bounds were proved in 
\cite{AE1} Theorem~4.9, with a differentiability condition on the 
first-order coefficients and in \cite{Daners} Theorem~6.1 without 
the differentiability condition.
The condition $\beta \geq 0$ is replaced by $\beta \in L_\infty(\partial \Omega,\Ri)$
in \cite{Daners8} Theorem~2.2 and Lemma~3.2.
No H\"older Gaussian kernel bounds are known if $\beta \neq 0$.
In this paper we show that the kernel has both Gaussian kernel bounds 
and H\"older Gaussian bounds for any $\beta \in L_\infty(\partial \Omega)$,
even complex valued.
Also the lower-order coefficients can be complex, but we still require
that the principal coefficients are real valued (and measurable, although they do 
not have to be symmetric).

The first main theorem of this paper is as follows.

\begin{thm} \label{trobin101}
Let $\Omega \subset \Ri^d$ be a bounded Lipschitz domain.
For all $k,l \in \{ 1,\ldots,d \} $ let 
$c_{kl} \colon \Omega \to \Ri$ and $a_k,b_k,a_0 \colon \Omega \to \Ci$ 
be bounded and measurable.
Let $\beta \colon \partial \Omega \to \Ci$ be bounded and measurable.
Suppose there exists a $\mu > 0$ such that 
$\RRe \sum_{k,l=1}^d c_{kl}(x) \, \xi_k \, \overline{\xi_l} \geq \mu \, |\xi|^2$
for all $x \in \Omega$ and $\xi \in \Ci^d$.
Define the sesquilinear form $\gota_\beta \colon W^{1,2}(\Omega) \times W^{1,2}(\Omega) \to \Ci$
by 
\begin{eqnarray*}
\gota_\beta(u,v)
& = & \int_\Omega \sum_{k,l=1}^d c_{kl} (\partial_k u) \, \overline{\partial_l v} 
   + \int_\Omega \sum_{k=1}^d a_k \, (\partial_k u) \, \overline v
   + \int_\Omega \sum_{k=1}^d b_k \, u \, \overline{\partial_k \, v}   \\* 
& & \hspace*{20mm} {}
+ \int_\Omega a_0 \, u \, \overline v
+ \int_{\partial \Omega} \beta \, (\Tr u) \, \overline{\Tr v}
.  
\end{eqnarray*}
Let $A$ be the operator associated with the closed sectorial form~$\gota_\beta$.
Then the semigroup generated by $-A$ has a kernel~$K$.
Moreover, for all $\tau > 0$ and $\tau' \in (0,1)$ there exist $\kappa \in (0,1)$
and $b,c,\omega > 0$ such that 
\[
|K_t(x,y)| 
\leq c \, t^{-d/2} \, e^{-b\frac{|x-y|^2}{t}} \, e^{\omega t}
\]
and
\[|K_t(x,y) - K_t(x',y')| 
\leq c\, t^{-d/2} \, \Big( \frac{|x-x'|+|y-y'|}{t^{1/2} + |x-y|} \Big)^{\kappa} 
       \, e^{-b\frac{|x-y|^2}{t}} \, e^{\omega t} 
\]
for all $x,x',y,y' \in \Omega$ and $t>0$ with 
$|x-x'|+|y-y'| \leq \tau \, t^{1/2} + \tau' \, |x-y|$.
\end{thm}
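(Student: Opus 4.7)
I would proceed in two main stages: first establish the Gaussian upper bound on $K$ by Davies' exponential perturbation method, and then derive the H\"older bound via a De Giorgi--Nash--Moser type H\"older estimate for the parabolic equation $\partial_s u + Au = 0$ with Robin boundary condition. For the Gaussian bound, the starting point is that $\gota_\beta$ is a closed sectorial form on $W^{1,2}(\Omega)$: ellipticity of $(c_{kl})$, continuity of the trace $W^{1,2}(\Omega) \to L_2(\partial\Omega)$, and boundedness of the lower-order coefficients and $\beta$ give $\RRe \gota_\beta(u,u) + \omega_0 \|u\|_2^2 \geq \tfrac{\mu}{2}\|\nabla u\|_2^2$ for a suitable $\omega_0 > 0$. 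Hence $-A$ generates a holomorphic semigroup $S$ on $L_2(\Omega)$, and the coercivity combined with Sobolev embedding (or Nash's inequality) yields ultracontractivity $\|S_t\|_{L_2 \to L_\infty} \leq C t^{-d/4} e^{\omega t}$. For any bounded Lipschitz $\psi \colon \Omega \to \Ri$ with $\|\nabla\psi\|_\infty \leq 1$ and any $\rho \in \Ri$, consider the twisted form $\gota_{\beta,\rho}(u,v) = \gota_\beta(e^{-\rho\psi}u, e^{\rho\psi}v)$ on $W^{1,2}(\Omega)$; expansion of the twist in the principal part produces $\rho$-dependent first-order corrections absorbed into $\mu\|\nabla u\|_2^2$ by Young's inequality, while the Robin integral $\int_{\partial\Omega}\beta\,|\Tr u|^2$ is unchanged by the twist. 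This gives $\RRe \gota_{\beta,\rho}(u,u) + C(1+\rho^2)\|u\|_2^2 \geq \tfrac{\mu}{4}\|\nabla u\|_2^2$, so the semigroup $S^{(\rho)}$ associated with $\gota_{\beta,\rho}$, which is the conjugation of $S$ by $e^{\rho\psi}$, inherits $\|S^{(\rho)}_t\|_{L_2\to L_\infty} \leq Ct^{-d/4}e^{(\omega+C\rho^2)t}$. Choosing $\psi$ to interpolate between two points $x$ and $y$, optimising $\rho$, and combining with duality at time $t/2$ and the semigroup property yields the pointwise Gaussian kernel bound.

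For the H\"older bound, I would establish a local parabolic H\"older estimate for $L_\infty$-bounded weak solutions of $\partial_s u + Au = 0$ with the Robin condition on parabolic cylinders possibly meeting $\partial\Omega$, with quantitative constants depending only on $\mu$, the $L_\infty$-norms of the coefficients and $\beta$, and the Lipschitz character of $\Omega$. Interior H\"older continuity is the classical De Giorgi--Nash--Moser result for measurable real principal coefficients with bounded lower-order perturbations. At the boundary, Caccioppoli estimates on half-balls around boundary points produce a Robin boundary integral absorbed by the trace inequality $\|\Tr u\|_{L_2(\partial\Omega)}^2 \leq \varepsilon\|\nabla u\|_2^2 + C(\varepsilon)\|u\|_2^2$, and a Moser iteration together with a Nash-type oscillation decay then gives the H\"older modulus, following the strategy of \cite{AT4} and \cite{ERe2}. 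Applied to $u(s,x) = K_{s+t/2}(x,y)$ on $(0,t/2)\times\Omega$, with $\|u\|_\infty$ controlled by the Gaussian bound from the first stage and after rescaling to the natural parabolic scale $r \sim t^{1/2} \wedge |x-y|$, this gives H\"older continuity in~$x$; the analogous estimate in~$y$ follows from the same argument applied to the adjoint semigroup via $K_t(x,y) = \overline{K^*_t(y,x)}$. The hypothesis $|x-x'|+|y-y'| \leq \tau t^{1/2} + \tau'|x-y|$ with $\tau' < 1$ is precisely what ensures the Gaussian factor $e^{-b|x-y|^2/t}$ is comparable between $(x,y)$ and its perturbation, so that it can be pulled out as a common prefactor.

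The principal obstacle I anticipate is the boundary H\"older estimate on a general Lipschitz domain with complex Robin data. Lipschitz charts do not allow $\partial\Omega$ to be straightened to a coordinate hyperplane while preserving a standard oblique-derivative problem, so the Caccioppoli and oscillation arguments must be carried out directly in the variational formulation with the Robin boundary integral present. Furthermore, the complex lower-order coefficients and complex $\beta$ obstruct the usual De Giorgi--Nash iteration, which is naturally formulated for nonnegative solutions of real equations; this has to be handled by splitting $u$ into its real and imaginary parts and exploiting the sectoriality of $\gota_\beta$ to dominate the imaginary first-order and boundary contributions by the coercive real part, in the spirit of \cite{ERe2}.
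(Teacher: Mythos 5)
Your Gaussian-bound stage matches the paper's strategy in outline: the paper also uses Davies' exponential perturbation (Lemma~\ref{lrobin302} and the form~(\ref{erobin301;5})), and indeed the Robin integral is invariant under the twist exactly as you observe. The Gaussian kernel bound is then extracted from uniform $L_2 \to L_\infty$ bounds for $S^\rho_t$ via Lemma~A.1 of \cite{EO2}, which also folds in the H\"older modulus and the Gaussian weight simultaneously, rather than by the separate duality-and-optimisation argument you sketch; but this is a difference of packaging, not of substance.

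The H\"older stage is a genuinely different route, and here there is a real gap. You propose running a parabolic De Giorgi--Nash--Moser iteration directly on weak solutions of $\partial_s u + Au = 0$ with the Robin condition, on parabolic cylinders meeting $\partial\Omega$, with complex lower-order coefficients and complex $\beta$. You correctly identify the obstruction yourself in your third paragraph: Moser iteration hinges on truncations of real, sign-definite solutions, and for a complex equation the system for $(\RRe u, \IIm u)$ is coupled through the lower-order terms, so ``splitting into real and imaginary parts'' does not yield two scalar De Giorgi--Nash problems. Your proposed fix---dominating the imaginary contributions by the sectorial real part---is not worked out and, as stated, would require proving a parabolic oscillation lemma for a $2\times 2$ system with only the principal part diagonal; this is not the classical theory and is not established in the sources you cite. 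The paper avoids precisely this issue: it never applies a De Giorgi-type oscillation lemma to the full parabolic problem. Instead, at each fixed $t$, the identity $\gota_p(S^\rho_t u,v) = (f,v) + \sum_i(f_i,\partial_i v) + \int_\Gamma \beta\, \Tr g\,\overline{\Tr v}$ treats the time derivative, all lower-order (possibly complex) terms, \emph{and} the Robin boundary term as forcing data on the right-hand side. The elliptic De Giorgi estimates of \cite{ERe2}~Lemma~5.1 are then applied only to the homogeneous pure second-order real problem $\gota_p(u-v,\cdot) = 0$, where the classical (real, scalar) theory applies, and the forcing terms are absorbed by $L_2$ estimates and a Poincar\'e inequality. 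This is exactly the point of Proposition~\ref{probin301}, whose novelty relative to \cite{ERe2} is the extra boundary term carrying $\beta\,\Tr g$. Regularity at the optimal scale then follows by bootstrapping the pointwise Morrey and Campanato indices from $\gamma = 0$ up to near~$d$ (Lemma~\ref{lrobin304}), using the analyticity estimate $\|A^{(\rho)}S^\rho_t u\|_2 \lesssim t^{-1} e^{\omega(1+\rho^2)t} \|u\|_2$ to control the time derivative as a forcing term, followed by the Campanato--H\"older embedding; no parabolic oscillation lemma and no system De Giorgi theory is needed. Your approach, if it could be pushed through, would in effect have to reconstruct exactly this perturbative Campanato machinery, just in the parabolic rather than elliptic setting.

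A further point worth noting: the paper emphasises that the constants in the conclusion are uniform in $\mu$, $M = \max(\|C\|_\infty,\|a\|_\infty,\|b\|_\infty,\|a_0\|_\infty,\|\beta\|_\infty)$ and the Lipschitz character of $\Omega$ (Theorem~\ref{trobin306}). This uniformity is built into the Campanato bootstrap because constants there depend only on $\mu$, $M$, the chart constants $K$, and the De Giorgi constant $c_{DG}$. A direct parabolic Moser iteration would also need to be carried out with explicit control of these dependencies if the uniform statement is to be recovered, which is another piece of bookkeeping your sketch does not address.
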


For the proof we use a modification of the technique of Auscher 
\cite{Aus1} to use Morrey and Campanato spaces
to deduce H\"older Gaussian kernel bounds.
Here we will use a pointwise version of Morrey and Campanato seminorms
as in \cite{ERe2}.
Estimates are obtained
separately for the interior and regions near to the boundary. 
Then the Gaussian bounds follow from a similar result in \cite{EO2}.
We also prove that the constants in Theorem~\ref{trobin101} can be chosen uniformly 
with respect to the ellipticity constant and the $L_\infty$-norm of the coefficients
and~$\beta$.

If the lower order coefficients are real valued and the function $\beta$ is real 
valued, then the kernel $K$ is real valued.
If in addition the operator is self-adjoint, then $K$ satisfies Gaussian lower bounds.

\begin{thm} \label{trobin103}
Adopt the notation and assumptions of Theorem~\ref{trobin101}.
Assume in addition that the $a_k$, $b_k$, $a_0$ and $\beta$ are real valued.
Moreover, assume that the operator $A$ is self-adjoint. 
Then there are $b,c,\omega > 0$ such that 
\[
K_t(x,y) 
\geq c \, t^{-d/2} \, e^{-b\frac{|x-y|^2}{t}} \, e^{-\omega t}
\]
for all $x,y \in \Omega$ and $t > 0$.
\end{thm}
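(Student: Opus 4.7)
The plan is to follow the classical Nash--Aronson strategy for Gaussian lower bounds, with the H\"older Gaussian upper bound of Theorem~\ref{trobin101} as the main input. First I would verify that the semigroup $(S_t)_{t \geq 0}$ generated by $-A$ is positivity preserving: since all coefficients and $\beta$ are real, the off-diagonal form value $\gota_\beta(u_+, u_-)$ vanishes for real $u \in W^{1,2}(\Omega)$ (the bulk terms vanish because the supports of $\nabla u_+$ and $\nabla u_-$ are disjoint, and the boundary term vanishes because $(\Tr u_+)(\Tr u_-) = 0$), so the Beurling--Deny criterion applies and $K_t(x,y) \geq 0$ throughout. The H\"older kernel estimate also implies that $(S_t)$ is strongly continuous on $C(\overline{\Omega})$, so that $S_t \one \to \one$ uniformly on $\overline{\Omega}$; in particular $(S_{t/2} \one)(x) \geq \tfrac{1}{2}$ uniformly in $x \in \Omega$ for $t$ small enough.

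The second step is the on-diagonal short-time lower bound $K_t(x,x) \geq c\, t^{-d/2}$ for $t \in (0, t_0]$. By self-adjointness and Cauchy--Schwarz on $B := B(x, R\sqrt{t}) \cap \Omega$,
\[
K_t(x,x) = \int_\Omega K_{t/2}(x,y)^2\, dy
  \geq \frac{1}{|B|} \Bigl(\int_B K_{t/2}(x,y)\, dy\Bigr)^{\!2}.
\]
The Gaussian upper bound from Theorem~\ref{trobin101} gives $\int_{\Omega \setminus B} K_{t/2}(x,y)\, dy \leq C_1 e^{-b_1 R^2}$, which together with the uniform lower bound on $S_{t/2} \one$ yields $\int_B K_{t/2}(x,y)\, dy \geq 1/4$ once $R$ is chosen sufficiently large. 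As $|B| \leq C_d R^d t^{d/2}$, this produces $K_t(x,x) \geq c_0\, t^{-d/2}$. Combining with the H\"older continuity from Theorem~\ref{trobin101}, for $|x-y| \leq \eta \sqrt{t}$ with $\eta$ small one obtains the short-range off-diagonal bound $K_t(x,y) \geq K_t(x,x) - C_2\, \eta^\kappa\, t^{-d/2} \geq \tfrac{c_0}{2}\, t^{-d/2}$.

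To propagate to all $x, y \in \Omega$ and $t > 0$, I would apply the Aronson chaining trick. For $t \in (0, t_0]$ and $|x-y| > \eta \sqrt{t}$, pick $n \sim \lceil |x-y|^2 / (\eta^2 t) \rceil$ evenly spaced intermediate points in $\Omega$, iterate the Chapman--Kolmogorov identity, and integrate over balls of radius $\sim \sqrt{t/n}$ around each, using the short-range bound on every factor; optimizing in $n$ yields $K_t(x,y) \geq c\, t^{-d/2}\, e^{-b |x-y|^2 / t}$ for $t \in (0, t_0]$. For $t \geq t_0$ the estimate at $t = t_0$ is uniformly positive on $\overline\Omega \times \overline\Omega$ because $\diam \Omega < \infty$; positivity and self-adjointness combined with Perron--Frobenius give a ground state $\phi_1$ that is strictly positive on $\overline{\Omega}$ (Hopf's lemma together with the Robin boundary condition excludes boundary zeros). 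Using the semigroup identity $S_t = S_{t_0} S_{t - 2 t_0} S_{t_0}$ and the spectral bound $(S_{t - 2 t_0} \one, \one)_{L_2} \geq c' e^{-\lambda_1 (t - 2 t_0)}$, one then concludes $K_t(x,y) \geq c''\, e^{-\lambda_1 t}$; since $|x-y|^2 / t$ is bounded for $t \geq t_0$, this matches the claimed Gaussian form.

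The principal obstacle is the on-diagonal step: turning an integral bound into the pointwise estimate $K_t(x,x) \geq c\, t^{-d/2}$ uniformly in $x \in \overline\Omega$, including boundary points. This rests critically on the strong continuity of $(S_t)$ on $C(\overline{\Omega})$, which forces $(S_t \one)(x) \geq 1/2$ uniformly and in turn uses the full H\"older Gaussian estimate of Theorem~\ref{trobin101}. A secondary subtlety is the chaining construction near $\partial \Omega$: each intermediate ball must have $\Omega$-volume comparable to $(t/n)^{d/2}$, which is where the Lipschitz regularity of $\partial \Omega$ enters.
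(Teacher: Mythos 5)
Your overall strategy matches the paper's: positivity of $K$, strong continuity on $C(\overline\Omega)$ forcing $\sup_x |1 - \int K_t(x,\cdot)| \to 0$, a Nash-type short-time/short-range lower bound $K_t(x,y) \geq c\,t^{-d/2}$ for $|x-y| \lesssim \sqrt t$, and Aronson chaining. The paper delegates your second step (on-diagonal bound via Cauchy--Schwarz plus the H\"older Gaussian estimate) to a citation of ter Elst--Robinson, {\em Local lower bounds on heat kernels}, whereas you work it out directly; both are fine.

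Two places where you diverge and run into trouble. First, for large $t$ you invoke a separate spectral argument and claim the ground state is strictly positive on $\overline\Omega$ by Hopf's lemma; Hopf's lemma fails on a merely Lipschitz domain (there need not be an interior sphere), so that assertion does not hold as stated. You do not actually need it --- $\phi_1 \geq 0$ a.e.\ with $\int\phi_1 > 0$ (from irreducibility) already gives $(S_{t}\one,\one) \gtrsim e^{-\lambda_1 t}$ --- but the cleaner route, which the paper takes, is to avoid the case split entirely: choose $n$ to be the least integer with both $n \gtrsim |x-y|^2/t$ {\em and} $n \geq t/t_0$, so that each time step $t/n$ lies in $(0,t_0]$; the chain then gives $K_t(x,y) \geq c_1 (c_1 c_2^d c_4)^{n-1} t^{-d/2}$ and the bound $n-1 \lesssim |x-y|^2/t + t/t_0$ yields the Gaussian form with $e^{-\omega t}$ for all $t > 0$ in one stroke. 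Second, your chaining step quietly assumes that ``evenly spaced intermediate points in $\Omega$'' exist with consecutive gaps $\lesssim |x-y|/n$. For a bounded connected Lipschitz domain this is a nontrivial fact (the straight segment may exit $\Omega$): it is the {\em chain condition}, and the paper proves it separately in Appendix~A by showing the intrinsic geodesic metric on $\Omega$ is comparable to the Euclidean metric. You flag ``Lipschitz regularity of $\partial\Omega$ enters'' only through lower volume bounds on the intermediate balls, but that Ahlfors-regularity point is the easy part; the chain condition is the substantive lemma. Once these two points are repaired the argument is essentially the paper's.
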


These lower bounds have been proved before by \cite{ChoulliKayser}
for $C^{1,1}$-domains and the parabolic problem associated to operators
in non-divergence form, possibly non-autonomous, and with Neumann boundary 
conditions.

The main regularity proposition that is used in the proof of 
Theorem~\ref{trobin101} can also be used for the Dirichlet-to-Neumann operator~$\cn$, 
which we describe next.

Let $\Omega \subset \Ri^d$ be a bounded Lipschitz domain with boundary~$\Gamma$.
Let $\varphi \in L_2(\Gamma)$. 
Then we say that $\varphi \in D(\cn)$ if there exists a $u \in W^{1,2}(\Omega)$
such that $\Delta u = 0$ weakly on $\Omega$, $\Tr u = \varphi$ and with
normal derivative $\partial_\nu u \in L_2(\Gamma)$. 
Then $\cn \varphi = \partial_\nu u$.
The Dirichlet-to-Neumann operator~$\cn$ is a positive self-adjoint operator.
Let $T$ be the semigroup on $L_2(\Gamma)$ generated by $-\cn$.
If there exists a $\kappa > 0$ such that $\Omega$ is a $C^{1 + \kappa}$ domain,
then ter Elst--Ouhabaz \cite{EO6} proved that $T$ has a kernel 
satisfying Poisson bounds.
The last main theorem of this paper is that the kernel 
of $T$ is H\"older continuous and satisfies H\"older continuous Poisson bounds.

\begin{thm} \label{trobin102}
Suppose $d \geq 3$.
Let $\Omega \subset \Ri^d$ be a bounded domain with 
$C^{1 + \kappa}$-boundary for some $\kappa > 0$.
Let $K$ be the kernel of the semigroup on $L_2(\Gamma)$ generated by $-\cn$,
where $\cn$ is the Dirichlet-to-Neumann operator.
Then for all $\varepsilon \in (0,1)$ and $\tau > 0$ there exist $c,\kappa' > 0$ 
such that 
\[
|K_t(x,y) - K_t(x',y')|
\leq c \, (t \wedge 1)^{-(d-1)} \,
   \Big( \frac{|x-x'|+|y-y'|}{t} \Big)^{\kappa'} \,
    \frac{1}{\displaystyle \Big( 1 + \frac{|x-y|}{t} \Big)^{d-\varepsilon}}
\]
for all $x,y,x',y' \in \Gamma$ with $|x-x'| + |y-y'| \leq \tau \, t$.
\end{thm}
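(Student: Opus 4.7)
The plan is to apply the Morrey--Campanato regularity proposition (the ``main regularity proposition'' used for Theorem~\ref{trobin101} and advertised as reusable for $\cn$) to the harmonic extension of $T_t\varphi$, and then to couple the resulting boundary trace regularity with the Poisson bounds of~\cite{EO6}.

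Fix $t>0$ and $y\in\Gamma$, and set $\psi:=K_{t/2}(\cdot,y)\in L_2(\Gamma)$, so that by the semigroup identity $K_t(\cdot,y)=T_{t/2}\psi$. Thus the values of $K_t(\cdot,y)$ on $\Gamma$ are the traces of the harmonic extension $u\in W^{1,2}(\Omega)$ of~$T_{t/2}\psi$. The Poisson bound of~\cite{EO6} already supplies $L_2$ control on $\psi$ together with the off-diagonal factor $(1+|\cdot-y|/t)^{-(d-\varepsilon)}$ that will appear in the final estimate. After flattening the $C^{1+\kappa}$ boundary locally, I would apply the pointwise Morrey--Campanato seminorm estimate of the regularity proposition to $u$ near $\Gamma$; this should give, for boundary balls $B(x_{0},r)\cap\Omega$ with $r$ small,
\[
\int_{B(x_{0},r)\cap\Omega}|\nabla u|^{2}
\leq C\Bigl(\frac{r}{R}\Bigr)^{d-2+2\alpha}\int_{B(x_{0},R)\cap\Omega}|\nabla u|^{2}
\]
with $\alpha=\alpha(\kappa)\in(0,1)$. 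The Campanato characterisation of $C^{\alpha}$ together with a boundary trace inequality would then convert this into the pointwise Hölder estimate
\[
\bigl|T_{t/2}\psi(x)-T_{t/2}\psi(x')\bigr|
\leq c\Bigl(\frac{|x-x'|}{t}\Bigr)^{\kappa'}\bigl\|\psi\bigr\|_{L_2(\Gamma)}
\]
for $x,x'\in\Gamma$ with $|x-x'|\leq\tau\,t$.

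The desired off-diagonal Hölder Poisson bound then follows by the usual decomposition of $\psi$ along $\Gamma=(B(y,|x-y|/2)\cap\Gamma)\cup(\Gamma\setminus B(y,|x-y|/2))$: on the near part the Poisson bound of~\cite{EO6} feeds a small $L_2$-norm into the Hölder estimate above; on the far part the off-diagonal Poisson decay of~\cite{EO6} contributes the factor $(1+|x-y|/t)^{-(d-\varepsilon)}$. Self-adjointness of $\cn$ (so $K_t(x,y)=\overline{K_t(y,x)}$) transports the estimate to the second variable and covers the $|y-y'|$ contribution, and the prefactor $(t\wedge 1)^{-(d-1)}$ emerges from balancing the $L_2$ Poisson normalisation against the scale $r\sim t$ in the Campanato estimate.

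The principal obstacle will be the regularity step: the proposition used for Theorem~\ref{trobin101} is set up for the Robin form $\int_{\partial\Omega}\beta\,(\Tr u)\,\overline{\Tr v}$, whereas here $T_{t/2}\psi$ is prescribed by its trace and one controls only the normal derivative indirectly (via $\cn T_{t/2}\psi\in L_2(\Gamma)$). One therefore has to re-invoke the proposition in a Neumann-type inhomogeneous form, presumably via a lifting or subtraction argument reducing to vanishing trace or vanishing normal derivative on the relevant boundary piece. Ensuring that the exponent $\kappa'$ and the implicit constants are uniform down to a fixed scale is the remaining delicate point: the $C^{1+\kappa}$ regularity of $\partial\Omega$ is precisely what makes $\kappa'$ strictly positive, and the $\varepsilon$-loss in the off-diagonal exponent $d-\varepsilon$ is inherited directly from the corresponding loss in the Poisson bounds of~\cite{EO6}.
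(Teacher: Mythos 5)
Your first-stage plan---pass to the harmonic extension $u_{t,\varphi}$ of $T_t\varphi$, flatten the boundary, and run the Morrey--Campanato iteration of the regularity proposition with a Neumann-type inhomogeneous boundary term---is exactly what the paper does (Lemmas~\ref{lrobin402}--\ref{lrobin406} and Proposition~\ref{probin403}, invoking Proposition~\ref{probin301} with $\beta=\one_\Gamma$ and $g=u_{t,\cn T_t\varphi}$). This gives the boundary H\"older bound $\|T_t\|_{L_2(\Gamma)\to C^\kappa(\Gamma)}\leq c\,t^{-(d-1)/2}t^{-\kappa}e^{\omega t}$ of Theorem~\ref{trobin401}. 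Two remarks on this part: the H\"older exponent $\kappa'$ is a pure De Giorgi constant valid for any Lipschitz domain, and does not come from the $C^{1+\kappa}$ smoothness; the $C^{1+\kappa}$ hypothesis enters only through the cited Poisson bounds of~\cite{EO6}, which in fact give the optimal exponent $d$---there is no $\varepsilon$-loss there, so your attribution of the $d-\varepsilon$ to a loss ``inherited directly from'' \cite{EO6} is wrong.

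The second stage of your plan has a genuine gap. Writing $K_t(\cdot,y)=T_{t/2}\psi$ with $\psi=K_{t/2}(\cdot,y)$ and splitting $\Gamma=(B(y,|x-y|/2)\cap\Gamma)\cup(\Gamma\setminus B(y,|x-y|/2))$ does not do what you say. It is the \emph{far} part $\psi_2=\psi\one_{\Gamma\setminus B(y,|x-y|/2)}$ that has a small $L_2$-norm; the Poisson tail only gives $\|\psi_2\|_{L_2(\Gamma)}\lesssim t^{-(d-1)/2}\,(1+|x-y|/t)^{-(d+1)/2}$, and feeding that into the $L_2\to C^{\kappa'}$ bound produces an off-diagonal exponent $\approx (d+1)/2$, nowhere near $d-\varepsilon$. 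The \emph{near} part $\psi_1$ carries essentially the full mass $\|\psi_1\|_{L_2}\sim t^{-(d-1)/2}$, so the global H\"older bound gives no off-diagonal gain for $T_{t/2}\psi_1$ at $x$ far from $y$; to control it you would need a quantitative off-diagonal H\"older estimate for the DtN semigroup, which is essentially what you are trying to prove. The paper avoids this circularity altogether: it first establishes the one-variable H\"older bound $|K_t(x,y)-K_t(x',y)|\leq c\,t^{-(d-1)}(|x-x'|/t)^{\nu'}e^{\omega t}$ by simply composing $T_t=T_{t/2}T_{t/2}$ (one factor mapping $L_1\to L_2$ via the Poisson $L_2$-bound, the other $L_2\to C^{\nu'}$ via Theorem~\ref{trobin401}), then obtains the off-diagonal Poisson estimate for $|K_t(x,y)-K_t(x',y')|$ by the triangle inequality, and finally multiplies the $\varepsilon$-th power of the first by the $(1-\varepsilon)$-th power of the second. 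The $\varepsilon$-loss, and hence the exponent $d-\varepsilon$, is produced by this interpolation, not by \cite{EO6}. That interpolation step is the missing idea in your proposal.
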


In Theorem~\ref{trobin408} we will prove an extension of Theorem~\ref{trobin102}, 
where the Laplacian is replaced by a pure second-order strongly elliptic
operator in divergence form with real symmetric H\"older continuous coefficients.
This theorem will be a consequence of the previously mentioned Poisson kernel 
bounds in \cite{EO6} and a new theorem, Theorem~\ref{trobin401}, which provides optimal bounds 
for the semigroup from $L_2(\Gamma)$ into the space of H\"older continuous 
functions on $\Gamma$. 
The latter theorem is valid even for operators in divergence form with 
real measurable principal coefficients and complex lower-order terms.

The outline of the paper is as follows.
In Section~\ref{Srobin2} we introduce the notation and classes of 
coefficients that we need in this paper.
In Section~\ref{Srobin3} H\"older continuity of the
semigroup will be proved, together with a uniform version of Theorem~\ref{trobin101}. 
For this we use pointwise versions of Morrey and Campanato spaces 
and a regularity proposition to obtain elliptic regularity.
For the convenience of the reader we repeat in Section~\ref{Srobin2}
the pointwise versions of Morrey and Campanato spaces as introduced in \cite{ERe2}.
The regularity proposition contains a new boundary term to handle the Robin operator.
The same regularity proposition is then also used in Section~\ref{Srobin4} to prove 
an extension of Theorem~\ref{trobin102}.
In Section~\ref{Srobin5} we prove the lower kernel bounds of Theorem~\ref{trobin103}.
In the proof we use that a bounded connected Lipschitz domain satisfies
the chain condition. 
We prove this fact in the appendix.

\section{Preliminaries} \label{Srobin2}

In this section we introduce the classes of operators that we 
use throughout this paper. 
Since our proofs also involve Morrey and Campanato seminorms, we include
those definitions as well.

Let $\Omega \subset \Ri^d$ be open bounded with Lipschitz boundary $\Gamma$.
Let $\mu, M>0$.
Define $\ce_p(\Omega, \mu, M)$ to be the set of all measurable 
$C \colon \Omega \to \Ri^{d\times d}$ such that $\|C(x)\| \leq M$ for all 
$x\in \Omega$ and satisfy the ellipticity condition
\[
\sum_{k,l = 1}^d c_{kl}(x) \, \xi_k \, \overline{ \xi_l} \geq \mu \, |\xi|^2
\]
for all $\xi \in \Ri^d$ and $x\in \Omega$.
Here $\|C(x)\|$ is the $\ell_2$-norm of $C(x)$ in $\Ci^d$.
Let $\ce_p(\Omega) = \bigcup_{\mu, M>0} \ce_p(\Omega, \mu, M)$.

Let $\ce (\Omega, \mu, M)$ be the set of all tuples $(C, a, b,a_0)$, where 
$C \in \ce_p(\Omega, \mu, M)$, $a,b \colon \Omega \to \Ci^d$ measurable and 
$a_0 \colon \Omega \to \Ci$ measurable with $\|a(x)\|, \|b(x)\|, |a_0(x)| \leq M$ 
for all $x\in \Omega$.
Let $\ce (\Omega ) = \bigcup_{\mu, M>0} \ce (\Omega, \mu, M)$.

For all $(C,a,b,a_0)\in \ce (\Omega)$ define the closed sectorial forms 
$\gota_p, \gota \colon W^{1,2}(\Omega ) \times W^{1,2}(\Omega) \to \Ci $ by
\[
\gota_p (u,v) 
= \int_\Omega \sum_{k,l=1}^d 
   c_{kl} \, (\partial_k u) \, \overline{\partial_l v}
\]
and 
\[
\gota (u,v) 
= \int_\Omega \sum_{k,l=1}^d 
   c_{kl} \, (\partial_k u) \, \overline{\partial_l v} 
   + \int_\Omega \sum_{k=1}^d a_k \, (\partial_ku) \, \overline v 
   + \int_\Omega \sum_{l = 1}^d b_l \, u \, \overline{\partial_l v} 
   + \int_\Omega a_0 \, u \, \overline v
.  \]
Let $\beta \colon \Gamma \to \Ci$ be bounded measurable.
Define the closed sectorial form 
$\gota_\beta \colon W^{1,2}(\Omega ) \times W^{1,2}(\Omega) \to \Ci $ by
\[
\gota_\beta (u,v) = \gota (u,v) + \int_{\Gamma} \beta \, (\Tr u) \, \overline{\Tr v}
\]
It will be clear from the context what are $C, a, b, a_0$ and $\beta$.
Let $A$ be the m-sectorial operator associated to $\gota_\beta$.
We denote by $S$ the semigroup generated by $-A$.
We next show that $A$ is an elliptic operator with Robin boundary conditions.
In order to describe the domain of $A$, we need the notion of a 
weak co-normal derivative.

Define the operator $\ca \colon W^{1,2}(\Omega) \to (W^{1,2}_0(\Omega))^*$
by 
\[
\langle \ca u,v \rangle_{(W^{1,2}_0(\Omega))^* \times W^{1,2}_0(\Omega)}
= \gota(u,v)
.  \]
Let $u \in W^{1,2}(\Omega)$ and suppose that $\ca u \in L_2(\Omega)$.
Let $\psi \in L_2(\Gamma)$.
Then we say that $\psi$ is a {\bf weak co-normal derivative of $u$}
if 
\[
\gota(u,v) - (\ca u,v)_{L_2(\Omega)} = (\psi, \Tr v)_{L_2(\Gamma)}
\]
for all $v \in W^{1,2}(\Omega)$.
Then $\psi$ is unique by the Stone--Weierstra\ss\ theorem and we 
write $\partial_\nu u = \psi$.
If $u$ and $\Omega$ are smooth enough, then 
$\partial_\nu u 
= \sum_{k,l=1}^d \nu_l \, c_{kl} \, \partial_k u 
   + \sum_{k=1}^d \nu_k \, b_k \, u$.

\begin{lemma} \label{lRobin210}
$\dom(A_\beta) = \{ u \in W^{1,2}(\Omega) : 
   \ca u \in L_2(\Omega) \mbox{ and } \partial_\nu u + \Tr u = 0 \} $.
If $u \in \dom(A_\beta)$, then $A_\beta u = \ca u$.
\end{lemma}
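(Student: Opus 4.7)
The plan is to derive both inclusions from the first representation theorem applied to the closed sectorial form $\gota_\beta$, using the definitions of $\ca$ and of the weak co-normal derivative to recognise exactly how the boundary integral $\int_\Gamma \beta\,(\Tr u)\,\overline{\Tr v}$ encodes the Robin-type condition. I read the boundary condition in the statement as $\partial_\nu u + \beta\,\Tr u = 0$ (matching the Robin setup of Theorem~\ref{trobin101}), and proceed accordingly.

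For the forward inclusion, I start with $u \in \dom(A_\beta)$ and set $f = A_\beta u$, so that $\gota_\beta(u,v) = (f,v)_{L_2(\Omega)}$ for every $v \in W^{1,2}(\Omega)$. Specialising to $v \in W^{1,2}_0(\Omega)$ kills the boundary integral, because $\Tr v = 0$, and leaves $\gota(u,v) = (f,v)_{L_2(\Omega)}$. By the very definition of $\ca$ this means $\langle \ca u, v\rangle = (f,v)_{L_2(\Omega)}$ for all $v \in W^{1,2}_0(\Omega)$; since $f \in L_2(\Omega)$ and $W^{1,2}_0(\Omega)$ is dense in $L_2(\Omega)$, we conclude $\ca u = f \in L_2(\Omega)$, so in particular $A_\beta u = \ca u$. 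Allowing $v$ to range over all of $W^{1,2}(\Omega)$ and subtracting $(\ca u, v)_{L_2(\Omega)} = \gota(u,v) + \bigl[\gota_\beta(u,v)-\gota(u,v)\bigr] - \int_\Gamma \beta\,(\Tr u)\,\overline{\Tr v}$ from both sides of $\gota_\beta(u,v) = (\ca u, v)_{L_2(\Omega)}$, I obtain
\[
\gota(u,v) - (\ca u, v)_{L_2(\Omega)}
 = -\int_\Gamma \beta\,(\Tr u)\,\overline{\Tr v}
 = (-\beta\,\Tr u,\,\Tr v)_{L_2(\Gamma)}.
\]
Since $-\beta\,\Tr u \in L_2(\Gamma)$, the definition of the weak co-normal derivative identifies $\partial_\nu u = -\beta\,\Tr u$, which is precisely the Robin condition.

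For the reverse inclusion, I run the same calculation backwards. Given $u \in W^{1,2}(\Omega)$ with $\ca u \in L_2(\Omega)$ and $\partial_\nu u = -\beta\,\Tr u$, the definition of $\partial_\nu u$ yields, for every $v \in W^{1,2}(\Omega)$,
\[
\gota(u,v) - (\ca u, v)_{L_2(\Omega)}
= (\partial_\nu u, \Tr v)_{L_2(\Gamma)}
= -\int_\Gamma \beta\,(\Tr u)\,\overline{\Tr v}.
\]
Rearranging gives $\gota_\beta(u,v) = (\ca u, v)_{L_2(\Omega)}$ for all $v \in W^{1,2}(\Omega)$, and the first representation theorem applied to the m-sectorial form $\gota_\beta$ then delivers $u \in \dom(A_\beta)$ and $A_\beta u = \ca u$.

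I do not expect any genuine obstacle here: the argument is a mechanical unravelling of definitions, with the one conceptual point being the observation that $\gota_\beta - \gota$ is exactly the boundary pairing $(\beta\,\Tr u, \Tr v)_{L_2(\Gamma)}$, which matches the pairing used to define $\partial_\nu u$. The only mild care needed is to verify that the identity $\ca u = f$ promoted from $(W^{1,2}_0(\Omega))^*$ to $L_2(\Omega)$ is legitimate, which follows from $f \in L_2(\Omega)$ together with the density of $W^{1,2}_0(\Omega)$ in $L_2(\Omega)$.
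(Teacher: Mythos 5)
Your proof is correct and is exactly the routine unravelling of definitions that the paper leaves to the reader; in particular, you rightly observe that the statement of the lemma has a typographical slip and should read $\partial_\nu u + \beta\,\Tr u = 0$ (consistent with the form $\gota_\beta$ and with Theorem~\ref{trobin101}). Both inclusions, the identification $A_\beta u = \ca u$, and the recognition that the boundary pairing in $\gota_\beta - \gota$ is precisely the pairing defining the weak co-normal derivative are handled correctly.
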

\begin{proof}
The easy proof is left to the reader.
\end{proof}

Let $\kappa \in (0,1)$.
The space $C^\kappa(\Omega)$ is the space of all H\"older continuous functions 
of order $\kappa$ on $\Omega$ with semi-norm
\[
|||u|||_{C^\kappa(\Omega)}
= \sup \{ \frac{|u(x) - u(y)|}{|x-y|^\kappa} : x,y \in \Omega, \; 0 < |x-y| \leq 1 \}
.  \]
It is a Banach space with the norm 
$\|u\|_{C^\kappa(\Omega)} = |||u|||_{C^\kappa(\Omega)} + \|u\|_{L_\infty(\Omega)}$.

Finally we introduce the pointwise Morrey and Campanato semi-norms
as in \cite{ERe2}.
Let $\Omega \subset \Ri^d$ be open.
For all $x \in \Ri^d$ and $r > 0$ define $\Omega(x,r) = \Omega \cap B(x,r)$.
For all $\gamma \in [0,d]$, $R_e \in (0,1]$ and $x \in \Omega$ define 
$\|\cdot\|_{M,\gamma,x,\Omega,R_e} \colon L_2(\Omega) \to [0,\infty]$ by
\[
\|u\|_{M,\gamma,x,\Omega,R_e}
= \sup_{r \in (0,R_e]}
     \Big( r^{-\gamma} \int_{\Omega(x,r)} |u|^2 \Big)^{1/2}
 .  \]
Next, for all $\gamma \in [0,d+2]$, $R_e \in (0,1]$ and $x \in \Omega$ define 
$|||\cdot|||_{\cm,\gamma,x,\Omega,R_e} \colon L_2(\Omega) \to [0,\infty]$ by
\[
|||u|||_{\cm,\gamma,x,\Omega,R_e}
= \sup_{r \in (0,R_e]}
     \Big( r^{-\gamma} \int_{\Omega(x,r)} |u - \langle u\rangle_{\Omega(x,r)}|^2 \Big)^{1/2}
,  \]
where for an $L_2$-function $v$ we denote by 
$\langle v \rangle_{D} = \frac{1}{|D|} \int_D v$
the average of $v$ over a bounded measurable subset $D$ of the domain of $v$ with $|D| > 0$.

Since we consider a domain with a Lipschitz boundary, we need Lipschitz maps to a 
reference space, which we choose to be a cylinder.
Define
\[
E = \{x = (\tilde x,x_d) :  -1 < x_d < 1 \mbox{ and } \|\tilde x\|_{\Ri^{d-1}} < 1 \}
\]
the open cylinder in $\Ri^d$,
the lower half by $E^- = \{ x \in E  :  x_d < 0 \} $ and its mid plate by
\[
P = E \cap \{x \in \Ri^d : x_d=0 \}
.  \]

We emphasise that the field is the complex numbers and all our functions
are complex valued, except when explicitly stated otherwise.

\section{Robin boundary conditions} \label{Srobin3}

In this section we aim to prove a uniform version of Theorem~\ref{trobin101}.
The main tool is the following regularity result. 
For this section one may choose $\tilde \gamma = \gamma + \delta$. 
In Section~\ref{Srobin4} we will use the same proposition, but then the
choice $\tilde \gamma = \gamma + \delta$ does not work 
in the case $d=2$ and $d=3$.
In order not to repeat the major part of the proof, we prove a bit more 
in the next proposition.

\begin{prop} \label{probin301}
Let $\Omega \subset \Ri^d$ be open bounded with Lipschitz boundary $\Gamma$.
Let $U \subset \Ri^d$ be an open set and $\Phi$ a 
bi-Lipschitz map from an open neighbourhood of $\overline U$ onto 
an open subset of $\Ri^d$ such that $\Phi(U) = E$ and 
$\Phi(\Omega \cap U) = E^-$.
Then for all $\mu, M>0$ there exists a $\kappa \in (0,1)$ such that for all 
$\gamma, \tilde{\gamma} \in [0, d)$ and $\delta \in (0,2]$ with 
$\gamma + \delta < d-2+2\kappa$ and $\gamma + \delta \leq \tilde \gamma$ there exists a 
$c > 0$ such that the following is valid.
Let $C\in \ce_p (\Omega, \mu, M)$, $u,g \in W^{1,2}(\Omega)$, $\beta \in L_{\infty}(\Gamma)$ and 
$f, f_1, \ldots ,f_d \in L_2(\Omega)$.
Suppose that 
\begin{equation}
\gota_p(u,v)
= (f,v)_{L_2(\Omega)} + \sum_{i=1}^d (f_i, \partial_i v)_{L_2(\Omega)} 
   + \int_\Gamma \beta \, \Tr g \, \overline{\Tr v}
\label{eprobin301;1}
\end{equation}
for all $v \in W^{1,2}(\Omega)$.
Then 
\begin{eqnarray*}
\|\nabla(u \circ \Phi^{-1})\|_{M,\gamma+\delta, x, E^-, \frac{1}{2}}
& \leq & c \Big( \varepsilon^{2-\delta} \, \|f \circ \Phi^{-1}\|_{M,\gamma, x, E^-, \frac{1}{2}}
   + \sum_{i=1}^d \|f_i \circ \Phi^{-1}\|_{M,\gamma +\delta, x, E^-, \frac{1}{2}}
\\*
& & {}+ \varepsilon^{-(\gamma + \delta)} \, \|\nabla u\|_{L_2(\Omega)}
   + \varepsilon^{2-\delta} \, \|\beta\|_{L_{\infty}(\Gamma)}
            \, \|\nabla(g \circ \Phi^{-1})\|_{M,\gamma, x, E^-, \frac{1}{2}}
\\*
& &{}+ \varepsilon^{\tilde \gamma - \gamma - \delta} \, \|\beta\|_{L_{\infty}(\Gamma)}
    \, \|g \circ \Phi^{-1}\|_{M,\tilde \gamma, x, E^-, \frac{1}{2}}
           \Big)
\end{eqnarray*}
for all $x \in \frac{1}{2} E^-$ and $\varepsilon \in (0,1]$.
\end{prop}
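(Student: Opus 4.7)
The approach is a Campanato freezing-and-comparison argument carried out on the half-cylinder $E^-$, treating the boundary term $\beta \, \Tr g$ as a new kind of inhomogeneity. First I would pull the equation back through~$\Phi$: with $\tilde u = u \circ \Phi^{-1}$, $\tilde f = f \circ \Phi^{-1}$, $\tilde f_i = f_i \circ \Phi^{-1}$ and $\tilde g = g \circ \Phi^{-1}$, the identity~(\ref{eprobin301;1}) transforms into an identity of the same structural form on~$E^-$: the principal part is in divergence form with a new measurable coefficient matrix $\tilde C \in \ce_p(E^-, \tilde\mu, \tilde M)$, where $\tilde \mu, \tilde M$ depend only on $\mu, M$ and the bi-Lipschitz norms of~$\Phi$, and the boundary integral runs over~$P$ against a bounded measurable coefficient whose $L_\infty$-norm is still controlled by $\|\beta\|_{L_\infty(\Gamma)}$.

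Fix $x_0 \in \frac{1}{2} E^-$ and $r \in (0, 1/2]$. I would compare $\tilde u$ with the unique $v \in W^{1,2}(E^-(x_0, r))$ solving the associated homogeneous problem (no bulk source and no boundary source) with the same Dirichlet trace as $\tilde u$ on $\partial E^-(x_0, r) \setminus P$ and the homogeneous conormal condition on $P \cap B(x_0, r)$. Classical De Giorgi--Nash regularity for divergence-form operators with real measurable principal coefficients --- applied both to interior balls and to half-balls against the flat piece of boundary carrying a Neumann condition --- supplies a H\"older exponent $\kappa \in (0,1)$, depending only on $\mu, M$ and $\Phi$, such that
\[
\int_{E^-(x_0, \rho)} |\nabla v|^2 \leq C \, (\rho/r)^{d - 2 + 2\kappa} \int_{E^-(x_0, r)} |\nabla v|^2 \qquad (0 < \rho \leq r).
\]
The error $w := \tilde u - v$ lies in $W^{1,2}(E^-(x_0, r))$ with zero trace on $\partial E^-(x_0,r) \setminus P$ and satisfies the same weak equation as $\tilde u$ with sources $\tilde f$, $\tilde f_i$ and the boundary integral $\int_{P(x_0,r)} \beta \, \Tr \tilde g \, \overline{\Tr \phi}$.

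Testing with $w$, using the real ellipticity of $\tilde C$, the Poincar\'e inequality $\|w\|_{L_2} \leq C r \|\nabla w\|_{L_2}$ (valid because $w$ vanishes on the curved boundary), and the standard trace inequality
\[
\|\Tr h\|_{L_2(P(x_0, r))}^2 \leq C \big( r^{-1} \|h\|_{L_2(E^-(x_0, r))}^2 + r \|\nabla h\|_{L_2(E^-(x_0, r))}^2 \big),
\]
applied once to $w$ (to be absorbed on the left) and once to $\tilde g$ (producing two separate summands), one arrives at an energy bound of the form
\[
\int_{E^-(x_0, r)} |\nabla w|^2 \leq C \big( r^{2+\gamma} A_1^2 + r^{\gamma+\delta} A_2^2 + r^{\gamma+2} A_3^2 + r^{\tilde\gamma} A_4^2 \big),
\]
where $A_1 = \|\tilde f\|_{M,\gamma,x,E^-,1/2}$, $A_2 = \sum_{i=1}^d \|\tilde f_i\|_{M,\gamma+\delta,x,E^-,1/2}$, $A_3 = \|\beta\|_{L_\infty(\Gamma)} \, \|\nabla \tilde g\|_{M,\gamma,x,E^-,1/2}$ and $A_4 = \|\beta\|_{L_\infty(\Gamma)} \, \|\tilde g\|_{M,\tilde\gamma,x,E^-,1/2}$. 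Combining this with the decay for $\nabla v$ and invoking the standard Giaquinta--Campanato iteration lemma --- applicable because $\gamma + \delta < d - 2 + 2\kappa$ ensures the decay exponent strictly exceeds $\gamma+\delta$, while the source exponents $2+\gamma$, $\gamma+\delta$, $\gamma+2$, $\tilde\gamma$ all dominate $\gamma+\delta$ thanks to $\delta \leq 2$ and $\gamma+\delta \leq \tilde\gamma$ --- yields the required Morrey bound on $\nabla \tilde u$ at scale $\gamma+\delta$. The parameter $\varepsilon$ enters by stopping the iteration at radius $\varepsilon$ and controlling the remaining outer contribution by $\|\nabla \tilde u\|_{L_2(E^-)} \leq C\|\nabla u\|_{L_2(\Omega)}$; this produces the coefficient $\varepsilon^{-(\gamma+\delta)}$ in front of $\|\nabla u\|_{L_2(\Omega)}$, together with the powers $\varepsilon^{2-\delta}$ and $\varepsilon^{\tilde\gamma-\gamma-\delta}$ in front of the respective $A_i$.

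The main obstacle is the boundary contribution $\int_{P(x_0,r)} \beta \, \Tr \tilde g \, \overline{\Tr w}$: it is not a standard $L_2$-datum, and a naive estimate would require control of $\nabla \tilde g$ in Morrey norm at the target scale $\gamma+\delta$, which is not hypothesised. The device is to split $\|\Tr \tilde g\|_{L_2(P)}$ through the trace inequality above into a zeroth-order piece (controlled by the Morrey norm of $\tilde g$ itself at the coarser scale $\tilde\gamma$) and a gradient piece (controlled by $\nabla \tilde g$ at the original scale~$\gamma$). This is precisely why two separate Morrey controls of $\tilde g$ appear in the conclusion, and it forces the standing hypothesis $\gamma+\delta \leq \tilde\gamma$, which guarantees that $\varepsilon^{\tilde\gamma - \gamma - \delta}$ is a nonnegative power of $\varepsilon$ so that the iteration closes uniformly in~$\varepsilon$.
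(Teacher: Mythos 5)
Your proposal follows essentially the same Campanato perturbation strategy that the paper uses: pull back through $\Phi$, split into a homogeneous comparison piece enjoying De Giorgi--Nash decay versus an error piece carrying the inhomogeneities, run the energy estimate, and close with the Giaquinta iteration lemma under the constraints $\gamma+\delta<d-2+2\kappa$ and $\gamma+\delta\leq\tilde\gamma$.

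The one place where you depart from the paper's implementation is the boundary term. The paper does not pull back the surface integral to $P$; instead it keeps $\int_\Gamma \beta\,\Tr g\,\overline{\Tr v}$ in $\Omega$-coordinates, invokes the global trace $W^{1,1}(\Omega)\to L_1(\Gamma)$ applied to the product $g\,\overline v$, expands by the product rule into the three integrands $|g|\,|v|$, $|\nabla g|\,|v|$, $|g|\,|\nabla v|$, and localises simply by observing that $v$ is supported in $\Phi^{-1}(E^-(x,R))$. You instead transport the boundary integral to $P$, take Cauchy--Schwarz at the level of $L_2(P)$, and use a scale-invariant half-ball $L_2$-trace inequality on $\tilde g$ and $w$. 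Both routes yield exactly the same four source contributions $r^{2+\gamma}$, $r^{\gamma+\delta}$, $r^{\gamma+2}$, $r^{\tilde\gamma}$; the paper's route has the advantage of not needing to carry the Lipschitz Jacobian of the surface pullback explicitly, nor a separate scale-invariant trace lemma on the partial half-balls $E^-(x,r)$, while your route is closer to the textbook freezing argument. Two small precision remarks: the paper stops the iteration at $R=\tfrac12\varepsilon^2$ (not $\varepsilon$), which is what makes the coefficient $\varepsilon^{-(\gamma+\delta)}$ in front of $\|\nabla u\|_{L_2(\Omega)}$ come out together with $\varepsilon^{2-\delta}$ and $\varepsilon^{\tilde\gamma-\gamma-\delta}$ from the same choice of scale; and the paper's comparison function is chosen to absorb the source (so the \emph{difference} $u-v$ is the homogeneous De Giorgi solution), which is the mirror of your same-trace decomposition but equivalent.
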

\begin{proof}
The proof is a modification of the proof of \cite{ERe2} Proposition~6.5.
We indicate the differences and use the notation as in 
\cite{ERe2}.

Let $K \in [1,\infty)$ be larger than the Lipschitz constant 
of $\Phi|_{\Omega \cap U}$ and $\Phi^{-1}|_{E^-}$.
The trace map is continuous from $W^{1,1}(\Omega)$ into $L_1(\Gamma)$ by 
\cite{Nec2} Theorem~2.4.2.
Hence there exists a $c_1 \geq 1$ such that 
$\|\Tr v\|_{L_1(\Gamma)} \leq c_1 \, \|v\|_{W^{1,1}(\Omega)}$ for all 
$v \in W^{1,1}(\Omega)$.

After composition with $\Phi$ the equation (\ref{eprobin301;1}) 
transforms to an equation on $E^-$ with the aid of \cite{ERe2} Proposition~4.3
with a form with measurable coefficients $\tilde c_{kl}$.
For all $x \in \frac{1}{2} E^-$ and $0 < R \leq \frac{1}{2}$
define $P(x,R) = P \cap B(x,R)$.
Recall that $E^-(x,R) = E^- \cap B(x,R)$.
Let $W^{1,2}_{P(x,R)}(E^-(x,R))$ be the closure in $W^{1,2}(E^-(x,R))$
of the space
$ \{ w|_{E^-(x,R)} : w \in C_c^\infty(\Ri^d) \mbox{ and } 
         \supp w \cap (\partial (E^-(x,R)) \setminus P(x,R)) = \emptyset \} $.
By the De Giorgi estimates of \cite{ERe2} Lemma~5.1
there exist $\kappa \in (0,1)$ and $c_{DG} > 0$ 
such that 
\[
\int_{E^-(x,r)} |\nabla w|^2
\leq c_{DG} \, \Big( \frac{r}{R} \Big)^{d-2+2\kappa} \int_{E^-(x,R)} |\nabla w|^2 
\]
for all $x \in \frac{1}{2} E^-$, $r,R \in (0,1]$ and $w \in W^{1,2}(E^-(x,R))$
such that $r \leq R$ and 
\[
\sum_{k,l=1}^d \int_{E^-(x,R)}
\tilde c_{kl} \, (\partial_k w) \, \overline{\partial_l v} = 0
\]
for all $v \in W^{1,2}_{\Gamma(x,R)}(\Omega(x,R))$.

Let $x \in \frac{1}{2} E^-$ and $0 < R \leq \frac{1}{2}$.
By the Dirichlet-type Poincar\'e inequality of \cite{ERe2} Lemma~6.1(b)
and the Lax--Milgram theorem there exists a unique $\tilde v \in W^{1,2}_{P(x,R)}(E^-(x,R))$
such that 
\[
\sum_{k,l=1}^d \int_{E^-(x,R)} \tilde c_{kl} \, (\partial_k \tilde v) \, \overline{\partial_l \tau}
= \sum_{k,l=1}^d \int_{E^-(x,R)} 
     \tilde c_{kl} \, (\partial_k (u \circ \Phi^{-1})) \, \overline{\partial_l \tau}
\]
for all $\tau \in W^{1,2}_{P(x,R)}(E^-(x,R))$.
Define $v \colon \Omega \to \Ci$ by
\[
v(y)
= \left\{ \begin{array}{ll} 
     \tilde v(\Phi(y)) & \mbox{if } y \in \Phi^{-1}(E^-(x,R)) ,  \\[5pt]
     0 & \mbox{if } y \in \Omega \setminus \Phi^{-1}(E^-(x,R)) .
          \end{array} \right.
\]
Then $v \in W^{1,2}(\Omega)$ by in \cite{ERe2} Lemma~6.4.
Moreover, 
\begin{eqnarray*}
\sum_{k,l=1}^d \int_{E^-(x,R)} \tilde c_{kl} \, (\partial_k \tilde v) \, \overline{\partial_l \tilde v}
& = & \sum_{k,l=1}^d \int_{E^-(x,R)} 
   \tilde c_{kl} \, (\partial_k (u \circ \Phi^{-1})) \, \overline{\partial_l \tilde v}  \\
& = & \sum_{k,l=1}^d \int_{\Omega \cap U} 
    c_{kl} \, (\partial_k u) \, \overline{\partial_l v}  \\
& = & \gota_p(u,v)
= (f,v)_{L_2(\Omega)} + \sum_{i=1}^d (f_i, \partial_i v)_{L_2(\Omega)} 
    + \int_\Gamma \beta \, \Tr g \, \overline{\Tr v}
,  
\end{eqnarray*}
where the last term in the last step is new.
Using ellipticity and the Cauchy--Schwartz inequality, one obtains
\begin{eqnarray*}
\lefteqn{
(d! K^{d+2})^{-1} \, \mu \int_{E^-(x,R)} |\nabla \tilde v|^2
} \hspace*{8mm}   \\*
& \leq & d! \, K^d \Big( \int_{E^-(x,R)} |f \circ \Phi^{-1}|^2 \Big)^{1/2}
     \Big( \int_{E^-(x,R)} |\tilde v|^2 \Big)^{1/2} \\
& & {} + d! \, K^{d+1} \sum_{i=1}^d \Big( \int_{E^-(x,R)} |f_i \circ \Phi^{-1}|^2 \Big)^{1/2}
   \Big( \int_{E^-(x,R)} |\partial_i \tilde v|^2 \Big)^{1/2}
   + \Big| \int_\Gamma \beta \, \Tr g \, \overline{\Tr v} \Big|   \\
& \leq & 2 d! \, K^d \, \|f \circ \Phi^{-1}\|_{M,\gamma,x,E^-,\frac{1}{2}} \, R^{(\gamma+2)/2} \,
   \Big( \int_{E^-(x,R)} |\nabla \tilde v|^2 \Big)^{1/2} \\
& & {} + d! \, K^{d+1} \sum_{i=1}^d 
     \|f_i \circ \Phi^{-1} \|_{M, \gamma +\delta, x, E^-, \frac{1}{2}} 
     \, R^{(\gamma + \delta)/2} \, \Big( \int_{E^-(x,R)} |\nabla \tilde v|^2 \Big)^{1/2}
   + \Big| \int_\Gamma \beta \, \Tr g \, \overline{\Tr v} \Big|
,
\end{eqnarray*}
where we used the Dirichlet-type Poincar\'e inequality 
of \cite{ERe2} Lemma~6.1(b) in the last step.
We next estimate the boundary integral.
The boundedness of the trace gives
\begin{eqnarray*}
\lefteqn{
\Big| \int_\Gamma \beta \, \Tr g \, \overline{\Tr v} \Big|
} \hspace*{2mm}  \\*
& \leq & c_1 \, \|\beta\|_{L_{\infty}(\Gamma)} \|g \, \overline v\|_{W^{1,1}(\Omega)}  \\
& \leq & c_1 \, \|\beta\|_{L_{\infty}(\Gamma)} \int_\Omega 
    \Big( |g| \, |v| + |\nabla g| \, |v| + |g| \, |\nabla v| \Big)  \\
& \leq & c_1 \, d! \, d \, K^{d+1} \, \|\beta\|_{L_{\infty}(\Gamma)} \int_{E^-(x,R)} 
     \Big( |g \circ \Phi^{-1}| \, |\tilde v| 
   + |\nabla (g \circ \Phi^{-1})| \, |\tilde v| 
   + |g \circ \Phi^{-1}| \, |\nabla \tilde v| \Big)  \\
& \leq & c_1 \, d! \, d \, K^{d+1} \, \|\beta\|_{L_{\infty}(\Gamma)}
      \Big( R^{\tilde \gamma/2} \, \|g \circ \Phi^{-1}\|_{M,\tilde \gamma, x, E^-, \frac{1}{2}} 
   \cdot 2R   \\*
& & \hspace*{45mm} {}
   + R^{\gamma/2} \, \|\nabla(g \circ \Phi^{-1})\|_{M,\gamma, x, E^-, \frac{1}{2}} \cdot 2R
        \\*
& & \hspace*{60mm} {}
   + R^{\tilde \gamma/2} \, \|g \circ \Phi^{-1}\|_{M,\tilde \gamma, x, E^-, \frac{1}{2}}
       \Big)
       \Big( \int_{E^-(x,R)} |\nabla \tilde v|^2 \Big)^{1/2}  \\
& \leq & c_1 \, d! \, d \, K^{d+1} \, \|\beta\|_{L_{\infty}(\Gamma)}
   \Big( 3 R^{( \tilde \gamma - \gamma - \delta)/2 } 
       \, \|g \circ \Phi^{-1}\|_{M,\tilde \gamma, x, E^-, \frac{1}{2}} 
       \\*
& & \hspace*{36mm} {}
+ 2 R^{(2-\delta)/2} \, \|\nabla(g \circ \Phi^{-1})\|_{M,\gamma, x, E^-, \frac{1}{2}} 
    \Big) 
\cdot R^{(\gamma+\delta)/2} 
    \Big( \int_{E^-(x,R)} |\nabla \tilde v|^2 \Big)^{1/2} 
,
\end{eqnarray*}
where we used again the Dirichlet-type Poincar\'e inequality of \cite{ERe2} Lemma~6.1(b).
So 
\begin{eqnarray*}
\int_{E^-(x,R)} |\nabla \tilde v|^2
& \leq & (3 c_1 \, d!^2 \, d \, K^{2d+3})^2 \, \mu^{-2} 
    \Big( R^{(2-\delta)/2} \, \|f \circ \Phi^{-1}\|_{M,\gamma,x,E^-,\frac{1}{2}}
\\*
& & \hspace*{40mm} {} 
+ \sum_{i=1}^d \|f_i \circ \Phi^{-1} \|_{M, \gamma +\delta, x, E^-, \frac{1}{2}}
\\*
& & \hspace*{40mm} {}
+ R^{( \tilde \gamma - \gamma - \delta)/2 } \|\beta\|_{L_{\infty}(\Gamma)}
    \, \|g \circ \Phi^{-1}\|_{M,\tilde \gamma, x, E^-, \frac{1}{2}} 
\\*
& & \hspace*{40mm} {}
+ R^{(2-\delta)/2} \|\beta\|_{L_{\infty}(\Gamma)} \, 
       \|\nabla(g \circ \Phi^{-1})\|_{M,\gamma, x, E^-, \frac{1}{2}} 
    \Big)^2 R^{\gamma + \delta}
.
\end{eqnarray*}
Next let $r \in (0,R]$.
We apply the De Giorgi estimates to the function $w = u - v$.
Then 
\begin{eqnarray*}
\int_{E^-(x,r)} |\nabla (u \circ \Phi^{-1})|^2
& \leq & 2 \int_{E^-(x,r)} |\nabla (w \circ \Phi^{-1})|^2 
   + 2 \int_{E^-(x,r)} |\nabla \tilde v|^2 \nonumber \\
& \leq & 2 c_{DG} \Big( \frac{r}{R} \Big)^{d-2+2\kappa} \int_{E^-(x,R)} |\nabla (w \circ \Phi^{-1})|^2
   + 2 \int_{E^-(x,r)} |\nabla \tilde v|^2 \nonumber  \\
& \leq & 4 c_{DG} \Big( \frac{r}{R} \Big)^{d-2+2\kappa} \int_{E^-(x,R)} |\nabla (u \circ \Phi^{-1})|^2
   + (2 + 4 c_{DG}) \int_{E^-(x,R)} |\nabla \tilde v|^2    \\
& \leq & 4 c_{DG} \Big( \frac{r}{R} \Big)^{d-2+2\kappa} \int_{E^-(x,R)} |\nabla (u \circ \Phi^{-1})|^2
\\*
& & \hspace*{1mm} {}
   + c_2 \Big( R^{(2-\delta)/2} \, \|f \circ \Phi^{-1}\|_{M,\gamma,x,E^-,\frac{1}{2}}
   + \sum_{i=1}^d \|f_i \circ \Phi^{-1} \|_{M, \gamma +\delta, x, E^-, \frac{1}{2}}
      \\*
& & \hspace*{15mm} {}
   + R^{( \tilde \gamma - \gamma - \delta)/2 } \|\beta\|_{L_{\infty}(\Gamma)}
   \, \|g \circ \Phi^{-1}\|_{M,\tilde \gamma, x, E^-, \frac{1}{2}} 
      \\*
& & \hspace*{15mm} {}
   + R^{(2-\delta)/2} \|\beta\|_{L_{\infty}(\Gamma)} \, 
      \|\nabla(g \circ \Phi^{-1})\|_{M,\gamma, x, E^-, \frac{1}{2}} 
      \Big)^2 R^{\gamma + \delta}
,
\end{eqnarray*}
where $c_2 = (2 + 4 c_{DG}) (3 c_1 \, d! \, d \, K^{2d+3} \, M)^2 \, \mu^{-2} $.
Note that these bounds are uniform for all $x \in \frac{1}{2} E^-$ and
$0 < r \leq R \leq \frac{1}{2}$.
Moreover, $\gamma + \delta < d - 2 + 2 \kappa$.
Hence they can be improved by use of Lemma III.2.1 of \cite{Gia1} and one deduces that 
there exists a $c_3 > 0$, depending only on $c_{DG}$, $\gamma+\delta$ and $d-2+2\kappa$,
such that 
\begin{eqnarray*}
\int_{E^-(x,r)} |\nabla (u \circ \Phi^{-1})|^2
& \leq & c_3 \Big( \frac{r}{R} \Big)^{\gamma + \delta} \int_{E^-(x,R)} |\nabla (u \circ \Phi^{-1})|^2
\\*
& & \hspace*{10mm} {}
+ c_2 \, c_3
   \Big( \varepsilon^{2-\delta} \, \|f \circ \Phi^{-1}\|_{M,\gamma,x,E^-,\frac{1}{2}}
+ \sum_{i=1}^d \|f_i \circ \Phi^{-1} \|_{M, \gamma +\delta, x, E^-, \frac{1}{2}}
\\*
& & \hspace*{30mm} {}
+ \varepsilon^{\tilde \gamma - \gamma - \delta } \, \|\beta\|_{L_{\infty}(\Gamma)}
     \, \|g \circ \Phi^{-1}\|_{M,\tilde \gamma, x, E^-, \frac{1}{2}} 
\\*
& & \hspace*{30mm} {}
+ \varepsilon^{2-\delta} \, \|\beta\|_{L_{\infty}(\Gamma)} \, 
       \|\nabla(g \circ \Phi^{-1})\|_{M,\gamma, x, E^-, \frac{1}{2}} 
    \Big)^2 r^{\gamma + \delta}
,
\end{eqnarray*}
uniformly for all $x \in \frac{1}{2} E^-$, $\varepsilon \in (0,1]$
and $0 < r \leq R \leq \frac{1}{2} \, \varepsilon^2$.
Choosing $R = \frac{1}{2} \, \varepsilon^2$ gives
\begin{eqnarray*}
\int_{E^-(x,r)} |\nabla (u \circ \Phi^{-1})|^2
& \leq & 2^{\gamma + \delta} c_3 \,
(\varepsilon^{-(\gamma + \delta)} \, \|\nabla (u \circ \Phi^{-1})\|_{L_2(E^-)})^2 
   \, r^{\gamma + \delta}
\\*
& & \hspace*{10mm} {}
+ c_2 \, c_3
   \Big( \varepsilon^{2-\delta} \, \|f \circ \Phi^{-1}\|_{M,\gamma,x,E^-,\frac{1}{2}}
   + \sum_{i=1}^d \|f_i \circ \Phi^{-1} \|_{M, \gamma +\delta, x, E^-, \frac{1}{2}}
\\*
& & \hspace*{30mm} {}
+ \varepsilon^{\tilde \gamma - \gamma - \delta } \|\beta\|_{L_{\infty}(\Gamma)}
    \, \|g \circ \Phi^{-1}\|_{M,\tilde \gamma, x, E^-, \frac{1}{2}} 
\\*
& & \hspace*{30mm} {}
+ \varepsilon^{2-\delta} \|\beta\|_{L_{\infty}(\Gamma)} \, 
      \|\nabla(g \circ \Phi^{-1})\|_{M,\gamma, x, E^-, \frac{1}{2}} 
   \Big)^2 r^{\gamma + \delta}
,
\end{eqnarray*}
for all $x \in \frac{1}{2} E^-$ and $0 < r \leq \frac{1}{2} \, \varepsilon^2$.

The rest of the proof is similarly to the proof of \cite{ERe2} Proposition~6.5,
which is a modification of the proof of Proposition~3.2 in \cite{ERe2}.
\end{proof}

We also need the Davies perturbation.
Let 
\[
\cd = \{ \psi \in C^\infty_{\rm c}(\Ri^d,\Ri) : \|\nabla \psi\|_\infty \leq 1 \} 
.  \]
For all $\rho \in \Ri$ and $\psi \in \cd$ define the 
multiplication operator $U_\rho$ by $U_\rho u = e^{-\rho \, \psi} u$.
Note that $U_\rho u \in W^{1,2}(\Omega)$ for all 
$u \in W^{1,2}(\Omega)$.
Let $S^\rho_t = U_\rho \, S_t \, U_{-\rho}$ be the Davies perturbation
for all $t > 0$.
Let $- A^{(\rho)}$ the generator of $S^\rho$. 
Then $A^{(\rho)}$ is the operator associated with the form
$\gota^{(\rho)}_\beta$ with form domain $D(\gota^{(\rho)}_\beta) = W^{1,2}(\Omega)$ and  
\begin{equation}
\gota^{(\rho)}_\beta(u,v)
= \gota_p(u,v)
   + \int_\Omega \sum_{k=1}^d \Big( a^{(\rho)}_k \, (\partial_k u) \, \overline v
          + b^{(\rho)}_k \, u \, \overline{(\partial_k v)} \Big)
   + \int_\Omega a^{(\rho)}_0 \, u \, \overline v 
   + \int_{\Gamma} \beta \, (\Tr u) \, \overline{ \Tr v}
\label{erobin301;5}
\end{equation}
with 
\[
a^{(\rho)}_k = a_k - \rho \sum_{l =1}^d  c_{kl} \, \partial_l \psi
\quad , \quad
b^{(\rho)}_k = b_k + \rho \sum_{l =1}^d c_{lk} \, \partial_l \psi
\]
and 
\[
a^{(\rho)}_0
= a_0 - \rho^2 \sum_{k, l =1}^d c_{kl} \, (\partial_k \psi) \, \partial_l  \psi
      + \rho \sum_{k=1}^d a_k \, \partial_k \psi 
      - \rho \sum_{k=1}^d b_k \, \partial_k \psi  
.  \]

\begin{lemma} \label{lrobin302}
Let $\Omega\subset \Ri^d$ be open bounded with Lipschitz boundary $\Gamma$.
Then for all $\mu, M>0$, there exist $c_0, \omega_0 > 0$ such that
\begin{eqnarray*}
\|S^\rho_t u\|_{L_2(\Omega)} 
& \leq & e^{\omega_0 (1+\rho^2) t} \, \|u\|_{L_2(\Omega)} \nonumber \\
\|\nabla S^\rho_t u\|_{L_2(\Omega)} 
& \leq & c_0 \, t^{-1/2}e^{\omega_0 (1+\rho^2)  t} \, \|u\|_{L_2(\Omega)}  \\
\|A^{(\rho)} S^\rho_t u\|_{L_2(\Omega)} 
& \leq & c_0 \, t^{-1} \, e^{\omega_0 (1+\rho^2) t} \, \|u\|_{L_2(\Omega)}  \nonumber
\end{eqnarray*}
for all $(C, a, b, a_0) \in \ce (\Omega, \mu, M)$, $\beta \in L_{\infty} (\Gamma)$, 
$t>0$, $\rho \in \Ri$ and $\psi \in \cd$ with $\|\beta\|_{L_{\infty}(\Gamma)} \leq M$, 
where $S^\rho$ is the semigroup generated by $-A^{(\rho)}$.
\end{lemma}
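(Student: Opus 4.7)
The plan is to reduce all three estimates to the standard theory of holomorphic semigroups generated by sectorial forms, tracking constants so that they depend only on $\mu$, $M$ and~$\Omega$, with the entire $\rho$-dependence captured by a single vertex shift of size $\omega_0(1+\rho^2)$.

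First I would bound the perturbed coefficients uniformly: from the formulae for $a^{(\rho)}_k$, $b^{(\rho)}_k$ and $a^{(\rho)}_0$, together with $\|C(x)\|\leq M$, $|a_k|,|b_k|,|a_0|\leq M$ and $\|\nabla\psi\|_\infty\leq 1$, each is dominated by a constant times $1+\rho^2$. Applying the Cauchy--Schwartz inequality with the usual $\eta$-trick to the first-order terms of $\gota^{(\rho)}_\beta$, and the trace inequality
\[
\|\Tr u\|_{L_2(\Gamma)}^2\leq \eta\,\|\nabla u\|_{L_2(\Omega)}^2+C_\eta\,\|u\|_{L_2(\Omega)}^2
\]
(valid on Lipschitz domains, with $C_\eta$ depending only on $\eta$ and~$\Omega$) to the boundary term, and absorbing small fractions of $\mu\|\nabla u\|_{L_2(\Omega)}^2$, one obtains both a coercivity bound
\[
\RRe\gota^{(\rho)}_\beta(u,u)+\omega_0(1+\rho^2)\,\|u\|_{L_2(\Omega)}^2\geq \tfrac{\mu}{2}\,\|\nabla u\|_{L_2(\Omega)}^2
\]
and a sectoriality bound
\[
|\IIm\gota^{(\rho)}_\beta(u,u)|\leq C_1\bigl(\RRe\gota^{(\rho)}_\beta(u,u)+\omega_0(1+\rho^2)\,\|u\|_{L_2(\Omega)}^2\bigr)
\]
for all $u\in W^{1,2}(\Omega)$, with $\omega_0$ and $C_1$ depending only on $\mu$, $M$ and~$\Omega$.

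The first bound, combined with the identity $\tfrac{d}{dt}\|S^\rho_tu\|_{L_2(\Omega)}^2=-2\RRe\gota^{(\rho)}_\beta(S^\rho_tu,S^\rho_tu)$ and Gronwall's inequality, gives the first estimate. The sectoriality bound says that $A^{(\rho)}+\omega_0(1+\rho^2)I$ is m-sectorial, uniformly in the class, with half-angle $\arctan C_1<\pi/2$; the standard holomorphic semigroup estimate yields $\|(A^{(\rho)}+\omega_0(1+\rho^2)I)\,e^{-t(A^{(\rho)}+\omega_0(1+\rho^2)I)}\|\leq c\,t^{-1}$, and undoing the shift (absorbing the extra factor $(1+\omega_0(1+\rho^2)t)\leq e^{\omega_0(1+\rho^2)t}$ into the exponential) gives the third estimate. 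For the second estimate, apply the coercivity bound to $S^\rho_tu$ to obtain
\[
\tfrac{\mu}{2}\,\|\nabla S^\rho_tu\|_{L_2(\Omega)}^2\leq \RRe(A^{(\rho)}S^\rho_tu,S^\rho_tu)_{L_2(\Omega)}+\omega_0(1+\rho^2)\,\|S^\rho_tu\|_{L_2(\Omega)}^2,
\]
then use Cauchy--Schwartz together with the first and third estimates, and absorb $\omega_0(1+\rho^2)$ into the exponential via $(1+\rho^2)\leq(\alpha e t)^{-1}e^{\alpha(1+\rho^2)t}$ after slightly enlarging~$\omega_0$.

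The hard part is arranging uniformity of the sectorial constants over the class $\ce(\Omega,\mu,M)$ and isolating the $\rho$-dependence in the single factor $1+\rho^2$; the Lipschitz regularity of $\Omega$ enters only through the trace inequality needed to control the boundary term $\int_\Gamma\beta\,|\Tr u|^2$. Once the sectorial form bounds are secured with the correct $\rho$-scaling, everything else is routine semigroup theory.
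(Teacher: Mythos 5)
Your proof is correct and follows essentially the same strategy as the paper: control the boundary term via a trace inequality to establish a uniform coercivity bound for $\gota^{(\rho)}_\beta$ with a shift $\omega_0(1+\rho^2)$, then derive the $L_2$ bound by Gronwall and the gradient and operator bounds from uniform sectoriality and holomorphic semigroup theory. The only minor differences are cosmetic: the paper uses the Ne\v{c}as trace inequality $\|\Tr v\|_{L_1(\Gamma)}\leq c_1\|v\|_{W^{1,1}(\Omega)}$ applied to $|S^\rho_t u|^2$ rather than your $\eta$-split $L_2$ trace inequality (both work on Lipschitz domains), and the paper cites a reference lemma for the second and third estimates where you spell out the standard holomorphic-semigroup argument.
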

\begin{proof}
Without lost of generality we may assume that $\mu \leq 1$.
By \cite{Nec2} Theorem~2.4.2 there exists a $c_1 > 0$ such that 
$\|\Tr v \|_{L_1 (\Gamma)} \leq c_1 \, \|v\|_{W^{1,1}(\Omega)}$ for all $v\in W^{1,1}(\Omega)$.
Let $u\in L_2(\Omega)$.
Then the boundary term can be estimated by
\begin{eqnarray}
|\int_{\Gamma} \beta \, (\Tr S_t^\rho u) \, \overline{\Tr S_t^\rho u}| 
& \leq & c_1 \, \|\beta\|_{L_{\infty}(\Gamma)} \, \|(S_t^\rho u) \, \overline{ S_t^\rho u}
         \, \|_{W^{1,1}(\Omega)} \nonumber \\
& \leq & c_2 \, (\|S_t^\rho u\|^2_{L_2(\Omega)}
     + 2 \|\nabla S_t^\rho u\|_{L_2(\Omega)} \, \|S_t^\rho u\|_{L_2(\Omega)}) , \nonumber
\end{eqnarray}
where $c_2 = c_1 \, M$.
Now by ellipticity
\begin{eqnarray}
\mu \, \|\nabla S_t^\rho u\|^2_{L_2(\Omega)} 
& \leq & \RRe \gota_p(S_t^\rho u) \nonumber \\
& \leq & \RRe \gota^{(\rho)}_\beta(S_t^\rho u) 
   + 2 d \, M \, (1+|\rho |) \, \|\nabla S_t^\rho u\|_{L_2(\Omega)} 
          \, \|S_t^\rho u\|_{L_2(\Omega)} \nonumber \\
& & \hspace{10mm} {}
   + M \, (1+|\rho|)^2 \, \|S_t^\rho u\|_{L_2(\Omega)}^2 
   + c_2 \, (\|S_t^\rho u\|^2 + 2 \|\nabla S_t^\rho u\|_{L_2(\Omega)} \, \|S_t^\rho u\|_{L_2(\Omega)}) \nonumber \\
& \leq & \RRe \gota^{(\rho)}_\beta(S_t^\rho u) 
   + 2 (1+|\rho|) (d \, M + c_2) \, \|\nabla S_t^\rho u\|_{L_2(\Omega)}
        \, \|S_t^\rho u\|_{L_2(\Omega)} \nonumber \\
& & \hspace{10mm} {} 
   + (1+|\rho|)^2 \, (M + c_2) \, \|S_t^\rho u\|_{L_2(\Omega)}^2 \nonumber \\
& \leq & \RRe \gota^{(\rho)}_\beta(S_t^\rho u) 
   + \frac{\mu}{2} \, \|\nabla S_t^\rho u\|^2_{L_2(\Omega)} 
   + \omega_1 \, (1+\rho^2) \, \|S_t^\rho u\|_{L_2(\Omega)}^2 \nonumber
\end{eqnarray}
for all $t>0$, where $\omega_1 = \frac{4}{\mu} \, (d \, M + c_2)^2 + 2(M +c_2)$.
Therefore
\[
\frac{1}{2} \, \mu \, \|\nabla S_t^\rho u\|^2_{L_2(\Omega)}  
\leq \RRe \gota^{(\rho)}_\beta(S_t^\rho u) 
   + \omega_1 \, (1+\rho^2) \, \|S_t^\rho u\|_{L_2(\Omega)}^2 
.  \]
Differentiating gives
\begin{eqnarray}
\frac{d}{dt} \, \|S_t^\rho u\|^2_{L_2(\Omega)} 
& = & -2 \RRe (A^{(\rho)} \, S_t^\rho u, S_t^\rho u)_{L_2(\Omega)} \nonumber \\
& = & -2 \RRe \gota^{(\rho)}_\beta(S_t^\rho u) 
\leq 2 \omega_1 \, (1+\rho^2) \, \|S_t^\rho u\|^2_{L_2(\Omega)} 
. \nonumber
\end{eqnarray}
Hence by Gronwall's lemma
\[
\|S_t^\rho u\|_{L_2(\Omega)} 
\leq e^{\omega_1 (1+\rho^2)t} \, \|u\|_{L_2(\Omega)} 
.\]
The estimates for $\|A^{(\rho)} \, S_t u\|_{L_2(\Omega)}$ and 
$\|\nabla S_tu\|_{L_2(\Omega)}$ follows from \cite{ERe2} Lemma~7.1.
\end{proof}

We next consider the $L_2 \to L_{\infty}$ and H\"older estimates for the semigroup near $\Gamma$.

\begin{prop} \label{probin303}
Let $\Omega \subset \Ri^d$ be open bounded with Lipschitz boundary $\Gamma$.
Let $U\subset \Ri^d$ and $\Phi$ be a bi-Lipschitz map from an open neighbourhood of 
$\overline U$ onto an open subset of $\Ri^d$ such that $\Phi(U) = E$ and $\Phi(\Omega \cap U) = E^-$.
Then for all $\mu, M>0$ there exist $\kappa \in (0,1)$ and $c, \omega >0$ such that the following is valid.
Let $(C,a,b,a_0)\in \ce (\Omega, \mu, M)$ and $\beta \in L_{\infty}(\Gamma)$ 
with $\|\beta\|_{L_{\infty}(\Gamma)} \leq M$.
Let $S$ be the semigroup generated by $-A$.
Then
\[
\|S_t^\rho u\|_{L_\infty(\Phi^{-1}(\frac{1}{2} \, E^-))}
\leq c \, t^{-d/4} \, e^{\omega (1+\rho^2) t} \, \|u\|_{L_2(\Omega)}
\]
and 
\[
|(S_t^\rho u)(x) - (S_t^\rho u)(y)|
\leq c \, t^{-d/4} \, t^{-\kappa/2} \, e^{\omega (1+\rho^2) t} \, \|u\|_{L_2(\Omega)} \, |x-y|^\kappa
\]
for all $t > 0$, $u \in L_2(\Omega)$, $\rho \in \Ri $, $\psi \in \cd$ 
and $x,y \in \Phi^{-1}(\frac{1}{2} \, E^-)$ with $|x-y| \leq \frac{1}{4K}$, 
where $K > 1$ is larger than the Lipschitz constant of $\Phi|_{\Omega \cap U}$ and $\Phi^{-1}|_{E^-}$.
\end{prop}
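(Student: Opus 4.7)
The plan is to apply Proposition~\ref{probin301} to $w = S_t^\rho u$ and then bootstrap along the Morrey--Campanato scale. To put the equation into the form required by Proposition~\ref{probin301}, one writes $(A^{(\rho)} w, v)_{L_2(\Omega)} = \gota^{(\rho)}_\beta(w,v)$ using the explicit expression~(\ref{erobin301;5}) and rearranges to isolate $\gota_p(w,v)$: the Robin boundary term is absorbed by taking $g = -w$, while the lower-order contributions $a_k^{(\rho)}\partial_k w$, $b_k^{(\rho)} w$ and $a_0^{(\rho)} w$, together with $A^{(\rho)} w$, are placed into $f$ and $f_i$. Lemma~\ref{lrobin302} supplies the $L_2(\Omega)$ bounds for $w$, $\nabla w$ and $A^{(\rho)} w$, each with its characteristic power of $t^{-1/2}$ and the factor $e^{\omega_0(1+\rho^2)t}$. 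The polynomial-in-$\rho$ size of $a_k^{(\rho)}$, $b_k^{(\rho)}$, $a_0^{(\rho)}$ will, at the end, be harmlessly absorbed into the exponential by enlarging $\omega_0$.

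Next, one iterates Proposition~\ref{probin301} (with the choice $\tilde\gamma = \gamma + \delta$ permitted in this section) along a finite sequence $0 = \gamma_0 < \gamma_1 < \cdots < \gamma_N$ of pointwise Morrey exponents for $\nabla(w \circ \Phi^{-1})$, with each jump $\delta_n = \gamma_n - \gamma_{n-1} \in (0,2)$ strictly below $d-2+2\kappa - \gamma_{n-1}$. At the $n$-th step the pointwise Morrey bound of exponent $\gamma_{n-1}$ on $\nabla(w \circ \Phi^{-1})$, combined with the Dirichlet-type Poincar\'e inequality of \cite{ERe2} Lemma~6.1 and an elementary bound on local averages via the global $L_2$-bound on $w$, yields a pointwise Morrey bound of exponent $\gamma_{n-1} + 2$ on $w \circ \Phi^{-1}$ itself. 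These Morrey norms control exactly the quantities $\|f \circ \Phi^{-1}\|_{M,\gamma,x,\ldots}$, $\|f_i \circ \Phi^{-1}\|_{M,\gamma+\delta,x,\ldots}$ and $\|g \circ \Phi^{-1}\|_{M,\tilde\gamma,x,\ldots}$ that appear on the right-hand side of Proposition~\ref{probin301}, and the proposition then outputs a pointwise Morrey bound of exponent $\gamma_n$ on $\nabla(w \circ \Phi^{-1})$. One stops at the first index $N$ with $\gamma_N > d-2$, which requires only a number of steps depending on $d$ and $\kappa$.

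Once the iteration terminates, a pointwise Campanato bound for $w \circ \Phi^{-1}$ of exponent $\gamma_N + 2 > d$ is in hand, and the Campanato characterization of H\"older continuity yields a H\"older seminorm bound on $w \circ \Phi^{-1}$ over $\tfrac{1}{2} E^-$ with some exponent $\kappa \in (0,1)$ independent of $u$, $t$ and $\rho$. Pulling back through the bi-Lipschitz map $\Phi$ gives the H\"older estimate on $\Phi^{-1}(\tfrac{1}{2} E^-)$. The $L_\infty$ estimate is then obtained from the H\"older seminorm bound on a small ball together with the $L_2$ bound on $w$ via the elementary inequality $\|w\|_{L_\infty(B)} \leq c(r^{-d/2} \|w\|_{L_2(B)} + r^\kappa [w]_{C^\kappa(B)})$ applied to a ball of radius $r \sim t^{1/2}$.

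The main obstacle is the quantitative book-keeping. At every invocation of Proposition~\ref{probin301} the free parameter $\varepsilon$ must be chosen essentially as $\varepsilon \sim t^{1/2}$ so that the $\varepsilon^{2-\delta}$ and $\varepsilon^{-(\gamma+\delta)}$ factors combine with the $t^{-k/2}$ factors from Lemma~\ref{lrobin302} to produce the precise powers $t^{-d/4}$ in the $L_\infty$ estimate and $t^{-(d/4+\kappa/2)}$ in the H\"older estimate. Keeping the constants uniform in $\rho$ is delicate only in appearance: since the total number of iteration steps $N$ depends only on $d$, $\mu$ and $M$ (through $\kappa$), the polynomial-in-$\rho$ factors coming from $a_k^{(\rho)}$, $b_k^{(\rho)}$, $a_0^{(\rho)}$ accumulate into an overall polynomial prefactor, which is easily dominated by the exponential $e^{\omega(1+\rho^2)t}$ after a single enlargement of $\omega$.
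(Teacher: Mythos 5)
Your outline follows essentially the same route as the paper's own proof of Proposition~\ref{probin303}: apply the Davies perturbation, write the equation $\gota^{(\rho)}_\beta(S^\rho_t u, v) = (S^\rho_{t/2} A^{(\rho)} S^\rho_{t/2}u, v)_{L_2}$, isolate $\gota_p$, feed $f$, $f_i$ and $g = -S^\rho_t u$ into Proposition~\ref{probin301}, and bootstrap the pointwise Morrey exponent of $\nabla(S^\rho_t u\circ\Phi^{-1})$ in finitely many steps up to an index just below $d-2+2\kappa$; then pass through Campanato to obtain H\"older continuity, and control $L_\infty$ by the H\"older and $L_2$ bounds on a ball of parabolic radius. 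That is exactly the structure of the paper's induction $P(\gamma)\Rightarrow P(\gamma+\delta)$ in Lemma~\ref{lrobin304}.

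There is one concrete error in the book-keeping. You assert that at each invocation of Proposition~\ref{probin301} one should take $\varepsilon\sim t^{1/2}$. That choice produces the wrong powers of $t$: in the proof of Proposition~\ref{probin301} the relevant radius is $R=\tfrac12\varepsilon^2$, so the parabolic scale $R\sim t^{1/2}$ requires $\varepsilon\sim t^{1/4}$, which is what the paper uses (with a harmless factor $e^{-t}$ to keep $\varepsilon\le 1$). A quick check confirms this: the term $\varepsilon^{-(\gamma+\delta)}\|\nabla S^\rho_t u\|_{L_2(\Omega)}\lesssim \varepsilon^{-(\gamma+\delta)}t^{-1/2}$ must be $\lesssim t^{-(\gamma+\delta)/4-1/2}$, forcing $\varepsilon\sim t^{1/4}$; with $\varepsilon\sim t^{1/2}$ one gets the strictly worse singularity $t^{-(\gamma+\delta)/2-1/2}$, which would destroy the final exponent $t^{-d/4}$. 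Apart from this slip (and a slight vagueness in describing the intermediate step as gaining exponent $2$ on $w$ directly, rather than the exponent $\gamma+\delta$ via the $\varepsilon$-weighted Campanato estimate of \cite{ERe2} Lemma~6.2 followed by the Morrey--Campanato comparison of \cite{ERe2} Lemma~3.1(a)), your proposal matches the paper's argument.
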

\begin{proof}
Let $\mu, M >0$ and let $\kappa \in (0,1)$ be as in Proposition \ref{probin301}.
For all $\gamma \in [0, d-2+2\kappa )$ let $P(\gamma)$ be the hypothesis
\begin{list}{}{\leftmargin=1.8cm \rightmargin=1.8cm \listparindent=12pt}
\item
There exist $c,\omega > 0$, depending only on $K$, $\mu$, $M$, $\kappa$ and 
$c_{DG}$, such that 
\begin{equation}
\|(S_t^\rho u) \circ \Phi^{-1}\|_{M,\gamma,x, E^-,\frac{1}{2}}
\leq c \, t^{-\gamma / 4} \, e^{\omega (1+\rho^2)t} \, \|u\|_{L_2(\Omega)}
\label{eprobin303;3}
\end{equation}
and 
\begin{equation}
\|\nabla ((S_t^\rho u) \circ \Phi^{-1})\|_{M,\gamma,x, E^-,\frac{1}{2}}
\leq c \, t^{-\gamma / 4} \, t^{-1/2} \, e^{\omega (1+\rho^2) t} \, \|u\|_{L_2(\Omega)}
\label{eprobin303;4}
\end{equation}
for all $t > 0$, $u \in L_2(\Omega)$, $\rho \in \Ri$, $\psi \in \cd$
and $x \in \frac{1}{2} \, E^-$.
\end{list}

Clearly $P(0)$ is valid by Lemma~\ref{lrobin302}.

\begin{lemma}\label{lrobin304} 
Let $\gamma \in [0,d-2 + 2\kappa)$ and suppose that $P(\gamma)$ is 
valid.
Let $\delta \in (0,2]$ and suppose that $\gamma + \delta < d-2+2\kappa$.
Then $P(\gamma+\delta)$ is valid.
\end{lemma}
\begin{proof}
Let $c_0,\omega_0 > 0$ be as in Lemma~\ref{lrobin302}.
Let $t > 0$, $u \in L_2(\Omega)$, $\rho \in \Ri$, $\psi \in \cd $ and $x \in \frac{1}{2} \, E^-$.
Note that
\begin{equation}
\|(S_t^\rho u) \circ \Phi^{-1}\|_{L_2(E^-)} 
\leq d! \, K^d \, \|S_t^\rho u\|_{L_2(\Omega)}
\leq d! \, K^d \, e^{\omega_0 (1+\rho^2) t} \, \|u\|_{L_2(\Omega)} ,
\label{elrobin304;1}
\end{equation}
by Lemma~\ref{lrobin302}.

We first prove the bounds (\ref{eprobin303;3}) for $P(\gamma+\delta)$.
Choose $\varepsilon = t^{1/4} \, e^{-t} \in (0,1]$.
Let $c_1$ be as in \cite{ERe2} Lemma 6.2.
Then it follows from 
\cite{ERe2} Lemma~6.2, (\ref{eprobin303;4}) and (\ref{elrobin304;1}) that 
\begin{eqnarray*}
\lefteqn{
\|(S_t^\rho u) \circ \Phi^{-1}\|_{\cm,\gamma+\delta,x, E^-,\frac{1}{2}}
} \hspace*{15mm} \\*
& \leq & c_1 ( \varepsilon^{2-\delta} \, \|\nabla ((S_t^\rho u) \circ \Phi^{-1})\|_{M,\gamma,x, E^-,\frac{1}{2}}
               + \varepsilon^{-(\gamma + \delta)} \, \|(S_t^\rho u) \circ \Phi^{-1}\|_{L_2(E^-)} ) \\
& \leq & c_1 \, ( \varepsilon^{2-\delta} \, c \, t^{-\gamma / 4} \, t^{-1/2} \, e^{\omega (1+\rho^2)  t}
   + \varepsilon^{-(\gamma + \delta)} \, d! \, K^d \, e^{\omega_0 (1+\rho^2) t} ) \, \|u\|_{L_2(\Omega)}  \\
& \leq & c' \, t^{-(\gamma + \delta) / 4} \, e^{\omega' (1+\rho^2) t} \, \|u\|_{L_2(\Omega)}
\end{eqnarray*}
where $c' = c_1 \, (c + d! \, K^d)$ and $\omega' = \omega_0 + \omega + \gamma + \delta$.
By \cite{ERe2} Lemma~3.1(a) there exist $c_2, c_3 > 0$ such that
\[
\|v\|_{M,\gamma+\delta,x, E^-,\frac{1}{2}}
\leq c_2 \, \|v\|_{\cm,\gamma+\delta,x, E^-,\frac{1}{2}}
   + c_3 \, \|v\|_{L_2(E^-)}
\]
for all $x \in \frac{1}{2} \, E^-$ and $v \in L_2(E^-)$.
Hence 
\begin{eqnarray}
\|(S_t^\rho u) \circ \Phi^{-1}\|_{M,\gamma+\delta,x, E^-,\frac{1}{2}}
& \leq & c_2 \, c' \, t^{-(\gamma + \delta) / 4} \, e^{\omega' (1+\rho^2) t} \, \|u\|_{L_2(\Omega)}
   + c_3 \, d! \, K^d \, e^{\omega_0 (1+\rho^2) t} \, \|u\|_{L_2(\Omega)}  \nonumber  \\
& \leq & c'' \, t^{-(\gamma + \delta) / 4} \, e^{\omega'' (1+\rho^2) t} \, \|u\|_{L_2(\Omega)}
,   \label{elrobin304;2}
\end{eqnarray}
where $c'' = c' \, c_2 + c_3 \, d! \, K^d$ and $\omega'' = \omega_0 + \omega' + d + 2$.
This gives the bound (\ref{eprobin303;3}) for $P(\gamma + \delta)$.

In order to obtain (\ref{eprobin303;4}), we use Proposition~\ref{probin301}.
Note that
\[ 
\gota_\beta^{(\rho)}(S_t^\rho u, v) 
= (S_{t/2}^\rho \, A^{(\rho)} \, S_{t/2}^\rho u, v)_{L_2(\Omega)}\]
for all $v \in W^{1,2}(\Omega)$.
It follows from (\ref{erobin301;5}) that
\[
\gota_p(S_t^\rho u, v) 
= (f, v)_{L_2(\Omega)} 
   - \sum_{i=1}^d (f_i, \partial_i v)_{L_2(\Omega)} 
   - \int_{\Gamma} \beta \, (\Tr S_t^\rho u) \, \overline{\Tr v}
\]
for all $v \in W^{1,2}(\Omega)$, where $f_i = b_i^{(\rho)} \, S_tu$ and
\[
f 
= S_{t/2}^\rho \, A^{(\rho)} \, S_{t/2}^\rho u 
   - a_0^{(\rho)} \, S_t^\rho u 
   - \sum_{i=1}^d a_i^{(\rho)} \, \partial_i \, S_t^\rho u 
.\]
Apply Proposition \ref{probin301} with $\varepsilon = t^{1/4} \, e^{-t} \in (0,1]$.
The three terms in $f$ are approximated separately using Lemma~\ref{lrobin302} with 
$\tilde{\gamma} = \gamma + \delta$.
First,
\begin{eqnarray*}
\lefteqn{
\varepsilon^{2-\delta} \, 
   \|(S_{t/2}^\rho A^{(\rho)}S_{t/2}^\rho u) \circ \Phi^{-1}\|_{M,\gamma,x, E^-,\frac{1}{2}}
} \hspace*{30mm} \\*
& \leq & t^{(2-\delta)/4} \, c \, (t/2)^{-\gamma/4} \, e^{\omega (1+\rho^2)t/2} \, 
   \|A^{(\rho)} \, S_{t/2}^\rho u\|_{L_2(\Omega)}  \\
& \leq & c_0 \, (t/2)^{-1} \, e^{\omega_0 (1+\rho^2) t/2} \, t^{(2-\delta)/4} \, 
   c \, (t/2)^{-\gamma/4} \, e^{\omega (1+\rho^2) t/2} \, \|u\|_{L_2(\Omega)}  \\
& \leq & 2^{1 + \gamma/4} c_0 \, c \, t^{-(\gamma + \delta) / 4}
   \, t^{-1/2} \, e^{(\omega_0 + \omega)(1+\rho^2) t} \, \|u\|_{L_2(\Omega)}
.
\end{eqnarray*}
Secondly, 
\begin{eqnarray*}
\varepsilon^{2-\delta} \, 
   \|(a_0^{(\rho)} \, S_t^\rho u) \circ \Phi^{-1}\|_{M,\gamma,x, E^-,\frac{1}{2}}
& \leq & t^{(2-\delta)/4} \, 4 M \, (1+\rho^2) \, 
   c \, t^{-\gamma / 4} \, e^{\omega (1+\rho^2) t} \, \|u\|_{L_2(\Omega)}  \\
& \leq & 4 c \, M \, t^{-(\gamma + \delta) / 4} \, t^{-1/2} \, 
   e^{(\omega + 1)(1+\rho^2) t} \, \|u\|_{L_2(\Omega)}
.
\end{eqnarray*}
Thirdly,
\begin{eqnarray*}
\lefteqn{
\varepsilon^{2-\delta} \, 
\|(\sum_{i=1}^d a_i^{(\rho)} \, \partial_i S_t u) \circ \Phi^{-1}\|_{M,\gamma,x, E^-,\frac{1}{2}}
} \hspace*{30mm} \\*
& \leq & t^{(2-\delta)/4} \, M (1+|\rho|) \, 
   \|(\nabla S_t^\rho u) \circ \Phi^{-1}\|_{M,\gamma,x, E^-,\frac{1}{2}}  \\
& \leq & 2 t^{(2-\delta)/4} \, M \, t^{-1/2} \, e^{ (1+\rho^2)t} \, K \, 
   \|\nabla ((S_t^\rho u) \circ \Phi^{-1})\|_{M,\gamma,x, E^-,\frac{1}{2}}  \\
& \leq & 2 c \, K \, M \, t^{-(\gamma + \delta) / 4} \, t^{-1/2} \, 
   e^{(\omega + 1)(1+\rho^2) t} \, \|u\|_{L_2(\Omega)}
.
\end{eqnarray*}
The terms with $f_i$ in Proposition~\ref{probin301} can be estimated by
\begin{eqnarray*}
\sum_{i=1}^d \|(b_i^{(\rho)} \, S_t u) \circ \Phi^{-1}\|_{M,\gamma+\delta,x, E^-,\frac{1}{2}}
& \leq & d \, M \, (1+|\rho|) \, 
     \|(S_t^{\rho } u) \circ \Phi^{-1}\|_{M,\gamma+\delta,x, E^-,\frac{1}{2}}  \\
& \leq & 2 d \, M \, c'' \, t^{-(\gamma + \delta) / 4} \, t^{-1/2} \, e^{(1+\rho^2) t} \, 
   e^{\omega'' (1+\rho^2) t} \, \|u\|_{L_2(\Omega)}
,   
\end{eqnarray*}
where we used (\ref{elrobin304;2}) in the last step.
Next, 
\begin{eqnarray*}
\varepsilon^{-(\gamma + \delta)} \, \|\nabla S_t^\rho u\|_{L_2(\Omega)}
& \leq & t^{-(\gamma + \delta) / 4} \, e^{(\gamma + \delta) t} \, c_0 \, t^{-1/2} \, 
   e^{\omega_0 (1+\rho^2) t} \, \|u\|_{L_2(\Omega)}  \\
& \leq & c_0 \, t^{-(\gamma + \delta) / 4} \, t^{-1/2} \, 
   e^{(\omega_0 + d + 2)(1+\rho^2) t} \, \|u\|_{L_2(\Omega)}
.
\end{eqnarray*}
Finally for the new terms,
\begin{eqnarray*}
\varepsilon^{2-\delta} \, \|\nabla ((S_t^\rho u) \circ \Phi^{-1}) \|_{M, \gamma, x, E^-, \frac{1}{2}} 
& \leq & t^{(2-\delta)/4} \, c \, t^{-\gamma /4} 
     \, t^{-1/2} \, e^{\omega (1+\rho^2)t} \, \|u\|_{L_2(\Omega)} \\
& \leq & c \, t^{(\gamma + \delta)/4} \, t^{-1/2} \, e^{(\omega + 1) (1+\rho^2) t} \, \|u\|_{L_2(\Omega)} 
\end{eqnarray*}
and
\begin{eqnarray*}
\|(S_t^\rho u) \circ \Phi^{-1}\|_{M, \tilde{\gamma}, x, E^{-}, \frac{1}{2}} 
& \leq & c'' \, t^{-(\gamma + \delta)/4} \, e^{\omega'' (1+\rho^2) t} \, \|u\|_{L_2(\Omega)} \\
& \leq & c'' \, t^{-(\gamma + \delta)/4} \, t^{-1/2} \, e^{(\omega'' + 1)(1+\rho^2) t} \, \|u\|_{L_2(\Omega)},
\end{eqnarray*}
where (\ref{eprobin303;4}) and (\ref{elrobin304;2}) are used.
Now (\ref{eprobin303;4}) for $P(\gamma + \delta)$ follows from Proposition \ref{probin301}.
\end{proof}

\noindent
{\bf End of proof of Proposition~\ref{probin303}.}
This follows as at the end of the proof of Proposition~7.2 in \cite{ERe2}.
\end{proof}

The part of $\Omega$ away from $\Gamma$ can be estimated by interior regularity.

\begin{prop} \label{probin305}
Let $\Omega \subset \Ri^d$ be open bounded with Lipschitz boundary $\Gamma$.
Let $\zeta >0$.
Then for all $\mu, M>0$ there exist $\kappa \in (0,1)$ and $c,\omega >0$ such that the following is valid.
Let $(C,a,b,a_0) \in \ce (\Omega, \mu, M)$ and $\beta \in L_{\infty}(\Gamma)$ with 
$\|\beta\|_{L_{\infty}(\Gamma)} \leq M$.
Then
\[
\|S_t^\rho u\|_{L_\infty(\Omega_{\zeta})}
\leq c \, t^{-d/4} \, e^{\omega (1+\rho^2) t} \, \|u\|_{L_2(\Omega)}
\]
and 
\[
|(S_t^\rho u)(x) - (S_t^\rho u)(y)|
\leq c \, t^{-d/4} \, t^{-\kappa/2} \, e^{\omega (1+\rho^2) t} \, \|u\|_{L_2(\Omega)} \, |x-y|^\kappa
\]
for all $t>0$, $u\in L_2(\Omega)$, $\rho \in \Ri $, $\psi \in \cd $ and $x,y\in \Omega_\zeta$ with 
$|x-y| \leq 1$, where $\Omega_\zeta = \{ x\in \Omega : d(x,\Gamma) > \zeta \} $.
\end{prop}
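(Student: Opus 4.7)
The plan is to mirror the argument for Proposition~\ref{probin303}, but with an \emph{interior} regularity result in place of Proposition~\ref{probin301}. Since every $x \in \Omega_\zeta$ satisfies $B(x,\zeta) \subset \Omega$, one may work with test functions supported in $B(x,\zeta)$, so the boundary term $\int_\Gamma \beta \, \Tr g \, \overline{\Tr v}$ vanishes and no bi-Lipschitz flattening of~$\Gamma$ is needed.

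First I would state (or quote from \cite{ERe2}) the interior analogue of Proposition~\ref{probin301}: if $\gota_p(u,v) = (f,v)_{L_2(\Omega)} + \sum_{i=1}^d (f_i, \partial_i v)_{L_2(\Omega)}$ for all $v \in W^{1,2}_0(\Omega)$ and $\gamma + \delta < d - 2 + 2\kappa$, then
\[
\|\nabla u\|_{M, \gamma + \delta, x, \Omega, \zeta/2}
\leq c \Big(\varepsilon^{2-\delta} \, \|f\|_{M, \gamma, x, \Omega, \zeta/2}
+ \sum_{i=1}^d \|f_i\|_{M, \gamma + \delta, x, \Omega, \zeta/2}
+ \varepsilon^{-(\gamma + \delta)} \, \|\nabla u\|_{L_2(\Omega)}\Big)
\]
for all $x \in \Omega_\zeta$ and $\varepsilon \in (0,1]$. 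The proof is the one of Proposition~\ref{probin301} stripped of the terms involving $\beta$ and $g$, with interior De~Giorgi estimates on balls $B(x,R) \subset \Omega$, $R \leq \zeta/2$, replacing those on $E^-(x,R)$.

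Next I would run the same bootstrap on $\gamma$ as in Lemma~\ref{lrobin304}. Define $P(\gamma)$ to be the hypothesis that there exist $c, \omega > 0$, depending only on $\mu$, $M$, $\zeta$, $\kappa$ and $c_{DG}$, such that
\[
\|S_t^\rho u\|_{M, \gamma, x, \Omega, \zeta/2}
\leq c \, t^{-\gamma/4} \, e^{\omega(1+\rho^2)t} \, \|u\|_{L_2(\Omega)}
\]
and the analogous estimate, with the right-hand side multiplied by an additional $t^{-1/2}$, for $\|\nabla S_t^\rho u\|_{M, \gamma, x, \Omega, \zeta/2}$, uniformly for $x \in \Omega_\zeta$, $t > 0$, $u \in L_2(\Omega)$, $\rho \in \Ri$ and $\psi \in \cd$. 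The base case $P(0)$ is immediate from Lemma~\ref{lrobin302}. The inductive step $P(\gamma) \Rightarrow P(\gamma + \delta)$ reproduces the proof of Lemma~\ref{lrobin304}, using the identity $A^{(\rho)} S_t^\rho u = S_{t/2}^\rho A^{(\rho)} S_{t/2}^\rho u$ and the Davies perturbation bounds, with the terms indexed by $\tilde\gamma$, $\beta$ and $g$ simply dropped.

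After finitely many iterations $P(\gamma)$ holds for some $\gamma$ sufficiently close to $d - 2 + 2\kappa$, and the $L_\infty$ and H\"older-continuity estimates on $\Omega_\zeta$ then follow from standard Morrey/Campanato embeddings, exactly as at the end of the proof of Proposition~7.2 in \cite{ERe2}. There is no substantial new obstacle in the interior case: the whole argument is strictly simpler than the proof of Proposition~\ref{probin303}, and the only bookkeeping point is that the constants in the final bounds depend on $\zeta$ through the scale $\zeta/2$ at which the pointwise Morrey seminorms are taken, which is harmless because $\zeta$ is a fixed parameter of the proposition.
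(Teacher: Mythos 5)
Your proposal is correct and follows essentially the same route as the paper: the paper's one-line proof of Proposition~\ref{probin305} says to repeat the argument of Proposition~\ref{probin303} with the interior regularity result \cite{ERe2} Proposition~3.2 in place of Proposition~\ref{probin301}, which is exactly the interior analogue you state (with the $\beta$ and $g$ terms dropped because test functions supported in $B(x,\zeta)\subset\Omega$ have vanishing trace, and no flattening map $\Phi$ needed). Your bootstrap on $P(\gamma)$ and the appeal to Morrey/Campanato embeddings at the end mirror the paper's argument faithfully.
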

\begin{proof}
This follows similarly to the proof of Proposition~\ref{probin303} using 
\cite{ERe2} Proposition~3.2 instead of Proposition~\ref{probin301}.
\end{proof}

We can now prove Gaussian H\"older kernel bounds for second-order 
operators with complex lower-order coefficients and complex Robin boundary conditions.

\begin{thm} \label{trobin306}
Let $\Omega \subset \Ri^d$ be open bounded with Lipschitz boundary $\Gamma$.
Then for all $\mu, M, \tau>0$ and $\tau' \in (0,1)$ there exist $\kappa \in (0,1)$ and 
$b,c, \omega >0$ such that the following is valid.
Let $(C,a,b,a_0)\in \ce (\Omega, \mu, M)$ and $\beta \in L_\infty(\Gamma)$
with $\|\beta\|_{L_\infty(\Gamma)} \leq M$.
Let $S$ be the semigroup generated by $-A$.
Then $S$ has a kernel $K$.
Moreover,
\[|K_t(x,y)| \leq c \, t^{-d/2} \, e^{-b\frac{|x-y|^2}{t}} \, e^{\omega t}\]
and
\[|K_t(x,y) - K_t(x',y')| 
\leq c \, t^{-d/2} \, \Big( \frac{|x-x'|+|y-y'|}{t^{1/2} + |x-y|} \Big)^{\kappa} \,
   e^{-b\frac{|x-y|^2}{t}} \, e^{\omega t} \]
for all $x,x',y,y' \in \Omega $ and $t>0$ with $|x-x'|+|y-y'| \leq \tau \, t^{1/2} + \tau' \, |x-y|$.
\end{thm}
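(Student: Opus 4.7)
The plan is to combine the boundary and interior H\"older regularity estimates of Propositions~\ref{probin303} and~\ref{probin305} with the Davies perturbation method to extract both the Gaussian and H\"older Gaussian kernel bounds, in the spirit of the transfer result referenced in the introduction and used in~\cite{EO2}. First, I would cover $\overline\Omega$ by a finite collection consisting of the interior region $\Omega_\zeta$ (for some fixed $\zeta > 0$ depending only on $\Omega$) together with finitely many boundary charts $U$ of the form required by Proposition~\ref{probin303}. Piecing the two propositions together yields, uniformly for $(C,a,b,a_0) \in \ce(\Omega,\mu,M)$, $\|\beta\|_{L_\infty(\Gamma)} \leq M$, $\rho \in \Ri$ and $\psi \in \cd$, the global bound $\|S_t^\rho\|_{L_2(\Omega) \to L_\infty(\Omega)} \leq c\, t^{-d/4}\, e^{\omega(1+\rho^2)t}$ and the local H\"older bound
\[
|(S_t^\rho u)(x) - (S_t^\rho u)(y)| \leq c\, t^{-d/4-\kappa/2}\, e^{\omega(1+\rho^2)t}\, \|u\|_{L_2(\Omega)}\, |x-y|^\kappa
\]
for $|x-y| \leq \delta_0$ with some fixed $\delta_0 \in (0,1]$. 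The adjoint $(S_t^\rho)^* = U_{-\rho} S_t^* U_\rho$ is the Davies perturbation of the semigroup of the adjoint form, whose coefficients stay in $\ce(\Omega,\mu,M)$ (with $\beta$ replaced by $\overline\beta$), so the same estimates hold for it.

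For the Gaussian upper bound, duality and the semigroup identity $S_t^\rho = S_{t/2}^\rho S_{t/2}^\rho$ give $\|S_t^\rho\|_{L_1(\Omega) \to L_\infty(\Omega)} \leq c'\, t^{-d/2}\, e^{2\omega(1+\rho^2)t}$. Hence $S$ admits a kernel $K$ with $|K_t^\rho(x,y)| \leq c'\, t^{-d/2}\, e^{2\omega(1+\rho^2)t}$ pointwise. Since $K_t^\rho(x,y) = e^{-\rho(\psi(x)-\psi(y))} K_t(x,y)$, for fixed $x,y \in \Omega$ I would pick $\psi \in \cd$ suitably mollifying the $1$-Lipschitz function $z \mapsto |z-x|$, so that $\psi(y)-\psi(x) \approx |x-y|$; then optimize over $\rho \geq 0$ (taking $\rho \sim |x-y|/t$) to obtain the Gaussian upper bound $|K_t(x,y)| \leq c''\, t^{-d/2}\, e^{\omega't}\, e^{-b|x-y|^2/t}$.

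For the H\"older estimate, read the local H\"older bound of $S_{t/2}^\rho$ as a linear functional on $L_2(\Omega)$; duality yields $\|K_{t/2}^\rho(x,\cdot) - K_{t/2}^\rho(x',\cdot)\|_{L_2(\Omega)} \leq c\, t^{-d/4-\kappa/2}\, e^{\omega(1+\rho^2)t/2}\, |x-x'|^\kappa$ for $|x-x'| \leq \delta_0$. Using
\[
K_t^\rho(x,y) - K_t^\rho(x',y) = \int_\Omega [K_{t/2}^\rho(x,z) - K_{t/2}^\rho(x',z)]\, K_{t/2}^\rho(z,y)\,dz
\]
with Cauchy--Schwarz and the dual $L_2$-bound on $K_{t/2}^\rho(\cdot,y)$ gives
\[
|K_t^\rho(x,y) - K_t^\rho(x',y)| \leq C\, t^{-d/2-\kappa/2}\, e^{\omega(1+\rho^2)t}\, |x-x'|^\kappa.
\]
The same argument applied to the adjoint semigroup yields the analogous bound in the $y$ variable. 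To pass back to $K_t$, expand
\[
K_t(x,y) - K_t(x',y') = e^{\rho\eta}[K_t^\rho(x,y) - K_t^\rho(x',y')] + (e^{\rho\eta} - e^{\rho\eta'})\, K_t^\rho(x',y'),
\]
with $\eta = \psi(x)-\psi(y)$, $\eta' = \psi(x')-\psi(y')$; the bound $|\eta-\eta'| \leq |x-x'|+|y-y'|$ comes from $\|\nabla\psi\|_\infty \leq 1$, the mean value inequality $|e^{\rho\eta}-e^{\rho\eta'}| \leq |\rho||\eta-\eta'|\max(e^{\rho\eta},e^{\rho\eta'})$, and Young's upgrade $s \leq \delta_0^{1-\kappa} s^\kappa$ for $s \leq \delta_0$ turn this second term into a matching H\"older factor. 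Optimizing over $\psi$ and $\rho$ as before delivers, for $|x-x'|+|y-y'| \leq \delta_0$,
\[
|K_t(x,y) - K_t(x',y')| \leq C\, t^{-d/2-\kappa/2}\, (|x-x'|+|y-y'|)^\kappa\, e^{-b|x-y|^2/t}\, e^{\omega t}.
\]

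To replace $t^{-\kappa/2}$ by the sharper $(t^{1/2}+|x-y|)^{-\kappa}$ of the statement, I would split on the size of $|x-y|$ relative to $t^{1/2}$: the two denominators are comparable when $|x-y| \lesssim t^{1/2}$, and for $|x-y| \gtrsim t^{1/2}$ the inequality $e^{-b|x-y|^2/t} \leq C_{b,b'} (|x-y|/t^{1/2})^{-\kappa}\, e^{-b'|x-y|^2/t}$ (any $b' < b$) trades a portion of the Gaussian decay for the required polynomial factor. The remaining range $|x-x'|+|y-y'| > \delta_0$ (still within the condition $\leq \tau t^{1/2} + \tau'|x-y|$) follows from the triangle inequality and the Gaussian upper bound: the assumption $\tau' < 1$ ensures $|x'-y'|$ and $|x-y|$ are comparable so both Gaussian factors match, and the ratio $((|x-x'|+|y-y'|)/(t^{1/2}+|x-y|))^\kappa$ is bounded below by a constant in this regime. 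The main obstacle is the bookkeeping in the H\"older Davies trick: the extra factor $|\rho|$ arising from the exponential difference must be absorbed into the $\rho$-optimization (via $|\rho| \leq C_\varepsilon e^{\varepsilon \rho^2 t}$) without degrading the Gaussian rate, and the polynomial-versus-Gaussian trade-off in the final step must be tuned to match the precise form of the statement.
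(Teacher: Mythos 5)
Your proposal is correct and follows essentially the same route as the paper: a compactness covering of $\overline\Omega$ by boundary charts and an interior region to combine Propositions~\ref{probin303} and~\ref{probin305} into uniform $L_2\to L_\infty$ and local $L_2\to C^\kappa$ bounds on the Davies-perturbed semigroup $S^\rho$, followed by the Davies optimization to extract Gaussian and H\"older Gaussian kernel bounds. The paper compresses that second step to a citation of Lemma~A.1 in \cite{EO2}; your write-up is, in effect, an explicit rendering of that lemma, and the bookkeeping points you flag at the end (absorbing the factor $|\rho|$ into the $\rho$-optimization, trading Gaussian decay for the $(t^{1/2}+|x-y|)^{-\kappa}$ factor, and handling the range $|x-x'|+|y-y'|>\delta_0$ using the boundedness of $\Omega$ and the $e^{\omega t}$ factor) are precisely the details settled there.
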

\begin{proof}
By a compactness argument it follows from Propositions~\ref{probin303} and \ref{probin305} 
that there exist $\delta, \kappa \in (0,1)$ and $c,\omega >0$ such that
\[
|(S^\rho_t u)(x)|
\leq c \, t^{-d/4} \, e^{\omega (1 + \rho^2) t} \, \|u\|_{L_2(\Omega)}
\]
and 
\[
|(S^\rho_t u)(x) - (S^\rho_t u)(y)|
\leq c \, t^{-d/4} \, t^{-\kappa/2} \, e^{\omega (1 + \rho^2) t} \, \|u\|_{L_2(\Omega)} \, |x-y|^\kappa
\]
for all $u \in L_2(\Omega)$, $t > 0$, $\rho \in \Ri$ and $\psi \in \cd$ with $|x-y| < \delta$.
Then the H\"older Gaussian kernel bounds follow as in the proof of Lemma A.1 in \cite{EO2}.
\end{proof}

\begin{cor} \label{crobin307}
For all $t > 0$ let $T_t \colon C(\overline \Omega) \to C(\overline \Omega)$ be the 
restriction of $S_t$ to $C(\overline \Omega)$.
Then $T$ is a holomorphic $C_0$-semigroup.
\end{cor}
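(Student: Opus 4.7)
The plan is to verify in turn that (a)~$T_t$ is a bounded operator on $C(\overline\Omega)$ satisfying the semigroup law, (b)~$T$ is strongly continuous at $t=0$, and (c)~$T$ admits a holomorphic extension to a sector. Items (a) and (c) follow almost directly from Theorem~\ref{trobin306} combined with the holomorphy of $S$ on $L_2(\Omega)$; the main work is (b).

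For (a), the Gaussian bound of Theorem~\ref{trobin306} integrates to give $\sup_{x\in\overline\Omega}\int_\Omega |K_t(x,y)|\,dy \leq c_1 e^{\omega t}$, hence $\|T_t u\|_\infty \leq c_1 e^{\omega t}\|u\|_\infty$ for $u \in C(\overline\Omega)$; the H\"older bound on $K_t$ in its first argument together with dominated convergence yields $T_t u \in C(\overline\Omega)$; and the semigroup identity is inherited from $S$ on $L_2(\Omega)$, since $C(\overline\Omega)\subset L_2(\Omega)$. For (c), the Davies-perturbation construction of Section~\ref{Srobin3} adapts to complex time $z$ in a sector $\Sigma_\theta=\{z\in\Ci:|\arg z|<\theta\}$ for some $\theta>0$, producing analogous H\"older Gaussian bounds for $K_z$ and uniform boundedness of $T_z$ on $C(\overline\Omega)$ over closed subsectors; together with the $L_2$-holomorphy of $z\mapsto S_z$, this delivers a holomorphic extension of $T$.

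For (b), fix $\lambda>\omega$. The Laplace-transform identity $R(\lambda,A)=\int_0^\infty e^{-\lambda s}T_s\,ds$ and part (a) show that the resolvent $R_\lambda:=R(\lambda,A)$ extends to a bounded operator on $C(\overline\Omega)$ with norm at most $c_1/(\lambda-\omega)$. For $f\in C(\overline\Omega)$ and $v:=R_\lambda f$, one has $v\in D(A)\cap C(\overline\Omega)$ and $A v=\lambda v-f\in C(\overline\Omega)$, so by the uniform sup-norm bound on $T_s$,
\[
\|T_t v-v\|_\infty
= \Big\|\int_0^t T_s(A v)\,ds\Big\|_\infty
\leq \frac{c_1}{\omega}(e^{\omega t}-1)\,\|A v\|_\infty,
\]
which tends to $0$ as $t\to 0^+$. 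Hence strong continuity holds on $R_\lambda(C(\overline\Omega))$, and by the uniform bound on $T_t$ for $t\in[0,1]$ it extends to the sup-norm closure of that range.

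The main obstacle is therefore to show that $R_\lambda(C(\overline\Omega))$ is dense in $C(\overline\Omega)$ in the supremum norm. (This density is specific to the Robin case; for Dirichlet conditions the analogous range would only be dense in $C_0(\Omega)$.) I would approach it by duality: a signed Radon measure $\mu$ on $\overline\Omega$ annihilating $R_\lambda(C(\overline\Omega))$ satisfies, for every $f\in C(\overline\Omega)$,
\[
\int_{\overline\Omega}\int_\Omega K_{R_\lambda}(x,y)\,f(y)\,dy\,d\mu(x)=0,
\]
where $K_{R_\lambda}(x,y):=\int_0^\infty e^{-\lambda s}K_s(x,y)\,ds$ is the jointly continuous resolvent kernel furnished by the H\"older Gaussian bounds. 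Fubini then forces the continuous function $y\mapsto \int_{\overline\Omega}K_{R_\lambda}(x,y)\,d\mu(x)$ to vanish identically on $\Omega$, and the injectivity on $L_2(\Omega)$ of the transpose of $R_\lambda$ (valid for $\lambda>\omega$, since $\lambda-A$ is bijective there) yields $\mu=0$. Combining (a), (b), and (c), $T$ is a bounded holomorphic $C_0$-semigroup on $C(\overline\Omega)$.
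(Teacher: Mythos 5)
The paper's own proof is a one-liner: it cites Nittka's Theorem~4.3 and notes that Theorem~\ref{trobin306} fills the gap in Nittka's argument when $\beta \not\geq 0$. Nittka's route establishes strong continuity directly, essentially by showing that $T_t \one_{\overline\Omega} \to \one_{\overline\Omega}$ uniformly (plus uniform continuity of test functions to handle the variable part), and that boundary step is exactly where the H\"older Gaussian bounds are needed. Your proposal replaces this with a Hahn--Banach/duality argument for density of $R_\lambda\bigl(C(\overline\Omega)\bigr)$ in $C(\overline\Omega)$. This is a genuinely different decomposition of the work, and parts~(a) and~(c) are fine in outline; unfortunately the density step in~(b), which you rightly flag as the main obstacle, has a real gap.

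The gap is the passage from ``$g(y) := \int_{\overline\Omega} K_{R_\lambda}(x,y)\,d\mu(x) = 0$ for a.e.\ $y$'' to ``$\mu = 0$.'' You justify it by injectivity of the $L_2$-transpose of $R_\lambda$, but $\mu$ is a Radon measure on $\overline\Omega$, not an $L_2(\Omega)$ function; it may carry mass on $\Gamma$ or have a singular part, and $L_2$-injectivity of $R_\lambda^*$ says nothing about such $\mu$. The argument, as written, is indifferent to the boundary condition: the Dirichlet resolvent $R_\lambda^D$ is equally injective on $L_2(\Omega)$, and running your Fubini manipulation with its (boundary-vanishing) Green kernel would ``prove'' density of $R_\lambda^D\bigl(C(\overline\Omega)\bigr)$ in $C(\overline\Omega)$, which is false, since that range lies in the proper closed subspace $\{u : u|_\Gamma = 0\}$. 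In other words, the one place where the Robin structure must enter, namely that $K_t(x,y)$, and hence $K_{R_\lambda}(x,y)$, does not vanish as $x$ or $y$ approaches $\Gamma$, never actually enters your chain of deductions. A secondary inaccuracy: $K_{R_\lambda}$ is \emph{not} jointly continuous for $d \geq 2$; integrating the Gaussian bound $|K_t(x,y)| \lesssim t^{-d/2} e^{-b|x-y|^2/t}$ over $t \in (0,1)$ produces a Green-function singularity on the diagonal. Fubini is still legitimate since $\sup_x \int_\Omega |K_{R_\lambda}(x,y)|\,dy < \infty$, but the continuity claim is spurious and ``vanish identically'' should be ``vanish almost everywhere.'' To close this gap one would essentially have to redo the boundary analysis that makes Nittka's direct approach work, which is why the paper takes that route.
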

\begin{proof}
This follows as in the proof of Theorem~4.3 in \cite{Nit4},
with the use of Theorem~\ref{trobin306}.
(Note that there is a gap in the proof of Theorem~4.3 in \cite{Nit4}
in case the condition $\beta \geq 0$ is not valid.)
\end{proof}

Finally note that the semigroup $S$ is irreducible in the following sense.

\begin{prop} \label{probin308}
Suppose that $\Omega$ is connected.
Let $\Omega_1 \subset \Omega$ be measurable.
Suppose that $S_t L_2(\Omega_1) \subset L_2(\Omega_1)$.
Then $|\Omega_1| = 0$ or $|\Omega \setminus \Omega_1| = 0$.
\end{prop}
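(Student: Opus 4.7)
The plan is to invoke Ouhabaz's invariance criterion for closed convex sets under a semigroup generated by an m-sectorial form. The set $C := L_2(\Omega_1)$ is closed and convex in $L_2(\Omega)$, and its orthogonal projection is the multiplication operator $P u = \mathds{1}_{\Omega_1} u$. By the necessary direction of Ouhabaz's theorem, the hypothesis $S_t C \subset C$ for every $t > 0$ forces $P u \in V := W^{1,2}(\Omega)$ for each $u \in V$.

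Applying this with the constant function $u = \mathds{1}_\Omega$, which lies in $W^{1,2}(\Omega)$ because $\Omega$ is bounded, I would deduce that $\chi := \mathds{1}_{\Omega_1}$ itself belongs to $W^{1,2}(\Omega)$. I then plan to exploit the idempotence $\chi^2 = \chi$: since $\chi \in W^{1,2}(\Omega) \cap L_\infty(\Omega)$, the product rule gives $\nabla(\chi^2) = 2 \chi \, \nabla \chi$, and combining with $\chi^2 = \chi$ in $W^{1,2}(\Omega)$ yields $(2\chi - 1) \, \nabla \chi = 0$ almost everywhere. Because $\chi$ is $\{0,1\}$-valued, $|2\chi - 1| = 1$ a.e., so $\nabla \chi = 0$ a.e.

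Finally, $\Omega$ is a connected open subset of $\Ri^d$, so any element of $W^{1,2}(\Omega)$ with vanishing weak gradient is almost everywhere constant. Being also $\{0,1\}$-valued, $\chi \equiv 0$ or $\chi \equiv 1$ a.e., which is the dichotomy $|\Omega_1| = 0$ or $|\Omega \setminus \Omega_1| = 0$.

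The only delicate point is the application of Ouhabaz's invariance criterion to the non-symmetric sectorial form $\gota_\beta$ with complex lower-order coefficients and complex Robin boundary term. I expect no serious difficulty here because I only need the easy necessary direction, which yields exactly the invariance $P V \subset V$ of the form domain under the projection; no inequality involving the sesquilinear form itself has to be verified, so the presence of complex coefficients and the boundary integral is immaterial for this step.
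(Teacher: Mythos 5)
Your proof takes essentially the same route as the paper: the paper invokes Ouhabaz's invariance criterion ([Ouh5] Theorem~2.2) to conclude $\one_{\Omega_1} u \in W^{1,2}(\Omega)$ for all $u \in W^{1,2}(\Omega)$, and then refers to the discussion on page~106 of [Ouh5] for the remainder, which is precisely the argument you spell out (take $u = \one_\Omega$, use idempotence of $\chi = \one_{\Omega_1}$ and the product rule to get $\nabla\chi = 0$, then connectedness). Your version is correct and simply makes explicit the step the paper delegates to the cited reference.
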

\begin{proof}
It follows from \cite{Ouh5} Theorem~2.2 that $\one_{\Omega_1} u \in W^{1,2}(\Omega)$
for all $u \in W^{1,2}(\Omega)$.
Then the proposition follows by the discussion on page~106 in \cite{Ouh5}.
\end{proof}

Also the semigroup on $C(\overline \Omega)$ is irreducible.

\begin{prop} \label{probin309}
Suppose that $\Omega$ is connected.
Let $T$ be the $C_0$-semigroup on $C(\overline \Omega)$ as in 
Corollary~\ref{crobin307}.
Let $F \subset \overline \Omega$ be closed and suppose that 
$T_t I \subset I$ for all $t > 0$, where 
$I = \{ u \in C(\overline \Omega) : u|_F = 0 \} $.
Then $F = \emptyset$ or $F = \overline \Omega$.
\end{prop}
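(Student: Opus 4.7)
The plan is to deduce Proposition~\ref{probin309} from the $L_2$-irreducibility result Proposition~\ref{probin308} by transferring closed-ideal invariance on $C(\overline\Omega)$ to closed-subspace invariance on $L_2(\Omega)$. Set $\Omega_1 = \Omega \setminus F$. Since $F$ is closed in $\overline\Omega$, the set $F \cap \Omega$ is closed in $\Omega$, so $\Omega_1$ is open, even in $\Ri^d$.

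First I would show that $S_t L_2(\Omega_1) \subset L_2(\Omega_1)$ for every $t > 0$. Given $u \in L_2(\Omega_1)$, approximate it in the $L_2(\Omega)$-norm by a sequence $u_n \in C_c(\Omega_1)$; extended by zero, each $u_n$ becomes a continuous function $\tilde u_n$ on $\overline\Omega$ that vanishes on $\partial\Omega \cup (F \cap \Omega) \supset F$, and so lies in $I$. The hypothesis gives $T_t \tilde u_n \in I$, so $S_t \tilde u_n = T_t \tilde u_n$ vanishes on $F$, in particular a.e.\ on $F \cap \Omega$. Thus $S_t \tilde u_n \in L_2(\Omega_1)$; since this subspace is closed in $L_2(\Omega)$, the $L_2$-limit $S_t u$ also lies in $L_2(\Omega_1)$. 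Proposition~\ref{probin308} then yields either $|\Omega \setminus F| = 0$ or $|F \cap \Omega| = 0$. In the first case $\Omega_1$ is an open subset of $\Ri^d$ of measure zero, hence empty, so $\Omega \subset F$ and closedness of $F$ in $\overline\Omega$ gives $F = \overline\Omega$.

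The main obstacle is the second case: $|F \cap \Omega| = 0$ does \emph{not} immediately force $F$ to be empty, since $F$ could contain points of $\partial\Omega$ or a closed null subset of $\Omega$. To rule this out I would combine the kernel representation from Theorem~\ref{trobin306} (whose H\"older estimate extends $K_t$ continuously to $\overline\Omega \times \overline\Omega$) with the strong continuity of $T$ on $C(\overline\Omega)$ provided by Corollary~\ref{crobin307}. Assume $F \neq \emptyset$ and pick $x_0 \in F$. The sequence $v_n(x) = \min(n \, d(x,F), 1)$ lies in $I$, is uniformly bounded by $1$, and converges pointwise to $\one_{\overline\Omega \setminus F}$. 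Dominated convergence, with $K_t(x_0,\cdot) \in L_1(\Omega)$ coming from the Gaussian bound, yields
\[
(T_t v_n)(x_0)
= \int_\Omega K_t(x_0,y) \, v_n(y) \, dy
\longrightarrow \int_{\Omega \setminus F} K_t(x_0,y) \, dy
= (T_t \one)(x_0),
\]
where the last equality uses $|F \cap \Omega| = 0$. Since each $T_t v_n \in I$, the left-hand side is zero, so $(T_t \one)(x_0) = 0$ for every $t > 0$. Letting $t \downarrow 0$ and invoking strong continuity of $T$ at $\one \in C(\overline\Omega)$ gives $\one(x_0) = 0$, a contradiction. Hence $F = \emptyset$, completing the dichotomy.
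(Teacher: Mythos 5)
Your proof is correct, and it reaches the conclusion by a route that is structurally parallel to the paper's but avoids a detour. The paper first proves a pointwise kernel vanishing statement: picking $f(x) = d(x,F)$, it deduces from $T_t(f\tau) \in I$ for all $\tau \in C(\overline\Omega)$ that $K_t(x,y) = 0$ for every $x \in F$ and (after a topological lemma $\Omega \setminus F^\circ \subset \overline{\overline\Omega \setminus F}$) for every $y \in \Omega\setminus F^\circ$; only then does it obtain $S_t J \subset J$ with $J = \{u \in L_2(\Omega): u|_F = 0 \text{ a.e.}\}$ and apply Proposition~\ref{probin308}. You instead transfer the ideal invariance directly to $L_2(\Omega_1)$ by density of $C_c(\Omega_1)$ and boundedness of $S_t$, which is shorter and requires no knowledge of where the kernel vanishes. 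In the endgame, after learning $|F \cap \Omega| = 0$, the paper again leans on the established kernel vanishing ($F^\circ = \emptyset$ forces $K_t(x,\cdot) \equiv 0$ on $\Omega$ for $x\in F$), while you reach $(T_t\one_{\overline\Omega})(x_0) = 0$ by a monotone-convergence/dominated-convergence argument with the functions $v_n = \min(n \, d(\cdot,F),1) \in I$; both then contradict strong continuity of $T$ at $\one_{\overline\Omega}$. The ingredients used (Proposition~\ref{probin308}, the continuous kernel of Theorem~\ref{trobin306}, and Corollary~\ref{crobin307}) coincide, but your version is more elementary in that it never needs to prove that the kernel vanishes off $F$, and it skips the auxiliary inclusion $\Omega \setminus F^\circ \subset \overline{\overline\Omega \setminus F}$ entirely.
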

\begin{proof}
Suppose that $F \neq \emptyset$ and $F \neq \overline \Omega$.
Define $f \in C(\overline \Omega)$ by $f(x) = d(x,F)$.
Let $t > 0$ and $x \in F$.
If $\tau \in C(\overline \Omega)$, then $f \, \tau \in I$, so
$0 = (T_t (f \, \tau))(x) = \int_\Omega K_t(x,y) \, f(y) \, \tau(y) \, dy$.
Hence $K_t(x,y) \, f(y) = 0$ for almost every $y \in \Omega$
and by continuity for all $y \in \Omega$.
Therefore $K_t(x,y) = 0$ for all $y \in \overline \Omega \setminus F$
and by continuity for all 
$y \in \overline{ \overline \Omega \setminus F}$, where the closure is in $\Ri^d$.
Let $F^\circ$ denote the interior of $F$ in $\Ri^d$.
It is elementary to show that 
$\Omega \setminus (F^\circ) \subset \overline{ \overline \Omega \setminus F}$.
Hence we proved that $K_t(x,y) = 0$ for all $x \in F$, $y \in \Omega \setminus (F^\circ)$
and $t > 0$.

Let $J = \{ u \in L_2(\Omega) : u|_F = 0 \mbox{ a.e.} \} $.
If $u \in J$, $t > 0$ and $x \in F$, then 
\[
(S_t u)(x)
= \int_\Omega K_t(x,y) \, u(y) \, dy
= \int_{\Omega \setminus F} K_t(x,y) \, u(y) \, dy
= 0
.  \]
So $S_t J \subset J$ for all $t > 0$.
Since $S$ is irreducible by Proposition~\ref{probin308}, it follows 
that $|F| = 0$ or $|\overline \Omega \setminus F| = 0$.
Since $F \neq \overline \Omega$ there exists an $x \in \overline \Omega$ and 
$r > 0$ such that $B(x,r) \subset \Ri^d \setminus F$.
Then 
$0 < |\Omega(x,r)| 
\leq |\overline \Omega \setminus F|$.
Hence $|F| = 0$.
Then also $|F^\circ| = 0$ and consequently $F^\circ = \emptyset$.
Therefore $\Omega \setminus (F^\circ) = \Omega$.
It follows that $K_t(x,y) = 0$ for all $t > 0$, $x \in F$ and $y \in \Omega$,
and then by continuity for all $y \in \overline \Omega$.
Then $1 = \lim_{t \downarrow 0} (T_t \one_{\overline \Omega})(x) 
= \lim_{t \downarrow 0} \int_\Omega K_t(x,y) \, dy = 0$ for all $x \in F$.
This is a contradiction since $F \neq \emptyset$.
\end{proof}

\section{Lower kernel bounds}    \label{Srobin5}

In this short section we prove the Gaussian lower bounds of Theorem~\ref{trobin103}.
The general outline is standard. 
We first show on-diagonal lower bounds for small time. 
Secondly we use the H\"older Gaussian upper bounds to obtain 
lower bounds close to the diagonal for small time. 
Finally we use the semigroup property together with the chain condition
to prove Gaussian lower bounds.

Adopt the notation and assumption of Theorem~\ref{trobin103}.
Let $T$ be the $C_0$-semigroup in $C(\overline \Omega)$ as in 
Corollary~\ref{crobin307}.
Then
$\lim_{t \downarrow 0} 
   \|T_t \one_{\overline \Omega} - \one_{\overline \Omega}\|_{C(\overline \Omega)} 
= 0$.
Hence 
\[
\lim_{t \downarrow 0} \sup_{x \in \Omega} 
   \Big| 1 - \int_\Omega K_t(x,y) \, dy \Big| = 0
.  \]
It follows from \cite{ER22} Theorem~2.1 that there are $c_1,c_2,t_0 > 0$
such that 
\[
K_t(x,y)
\geq c_1 \, t^{-d/2}
\]
for all $x,y \in \Omega$ and $t \in (0,t_0]$ with $|x-y| \leq c_2 \, t^{1/2}$.
Without loss of generality we may assume that $t_0 \leq 1$.
By Proposition~\ref{probinapp1} in Appendix~\ref{SrobinA}
the set $\Omega$ satisfies the chain condition.
That is, there exists a $c_3 > 0$ such that for all $x,y \in \Omega$ and 
$n \in \Ni$ there exist $x_0,\ldots,x_n \in \Omega$ such that 
$x_0 = x$, $x_n = y$ and 
$|x_{k+1} - x_k| \leq c_3 \frac{|x-y|}{n}$ for all $k \in \{ 0,\ldots,n-1 \} $.
Since $\Omega$ is bounded and Lipschitz, there exists a $c_4 > 0$ 
such that $|\Omega(x,r)| \geq c_4 \, r^d$ for all $x \in \Omega$ and 
$r \in (0,1]$.

Let $x,y \in \Omega$ and $t > 0$.
Let $n \in \Ni$ be the smallest natural number such that 
\[
\frac{4 c_3^2 \, |x-y|^2}{c_2^2 \, t} \leq n
\quad \mbox{and} \quad
\frac{t}{t_0} \leq n
.  \]
Then 
\begin{equation}
n-1 \leq \frac{4 c_3^2 \, |x-y|^2}{c_2^2 \, t} + \frac{t}{t_0}
.  
\label{etrobin103;1}
\end{equation}
By the chain condition there exist $x_0,\ldots,x_n \in \Omega$ such that 
$x_0 = x$, $x_n = y$ and 
$|x_{k+1} - x_k| \leq \frac{c_3}{n} \, |x-y|$ for all $k \in \{ 0,\ldots,n-1 \} $.
Then the semigroup property gives
\begin{eqnarray*}
K_t(x,y)
& = & \int_\Omega \ldots \int_\Omega K_{\frac{t}{n}}(x,z_1) \, 
    K_{\frac{t}{n}}(z_1,z_2) \ldots K_{\frac{t}{n}}(z_{n-2},z_{n-1}) \, 
    K_{\frac{t}{n}}(z_{n-1},y) \, dz_1 \ldots dz_{n-1} \\
& \geq & \int_{B(x_1, \frac{c_2 \sqrt{t}}{4 \sqrt{n}})}
        \ldots \int_{B(x_{n-1}, \frac{c_2 \sqrt{t}}{4 \sqrt{n}})}
    K_{\frac{t}{n}}(x,z_1) \, 
    K_{\frac{t}{n}}(z_1,z_2) \ldots K_{\frac{t}{n}}(z_{n-2},z_{n-1}) \, 
    K_{\frac{t}{n}}(z_{n-1},y) \\*
& & \hspace*{117mm} dz_1 \ldots dz_{n-1}
.  
\end{eqnarray*}
If $z_k \in B(x_k, \frac{c_2 \sqrt{t}}{4 \sqrt{n}})$ for all 
$k \in \{ 1,\ldots,n-1 \} $ and we set $z_0 = x_0$ and $z_n = x_n$, then 
\[
|z_k - z_{k+1}|
\leq |x_k - x_{k+1}| + \frac{2 c_2 \sqrt{t}}{4 \sqrt{n}}
\leq \frac{c_3}{n} \, |x-y| + \frac{c_2 \sqrt{t}}{2 \sqrt{n}}
\leq \frac{c_3}{n} \, \frac{ c_2 \sqrt{n} \sqrt{t} }{ 2 c_3 }
 + \frac{c_2 \sqrt{t}}{2 \sqrt{n}}
= c_2 \Big( \frac{t}{n} \Big)^{1/2}
\]
for all $k \in \{ 0,\ldots,n-1 \} $
and $\frac{t}{n} \leq t_0$.
Hence $K_{\frac{t}{n}}(z_k,z_{k+1}) \geq c_1 \, n^{d/2} \, t^{-d/2}$
and 
\begin{eqnarray*}
K_t(x,y)
& \geq & \bigg( c_4 \, \Big( \frac{c_2 \sqrt{t}}{4 \sqrt{n}} \Big)^d \bigg)^{n-1}
   \, \Big( c_1 \, n^{d/2} \, t^{-d/2} \Big)^n  \\
& = & c_1 \, (c_1 \, c_2^d \, c_4)^{n-1} \, n^{d/2} \, t^{-d/2}
\geq c_1 \, (c_1 \, c_2^d \, c_4)^{n-1} \, t^{-d/2}
.  
\end{eqnarray*}
Let $M \in [1,\infty)$ be such that $\frac{1}{M} \leq c_1 \, c_2^d \, c_4$.
Then 
\[
(c_1 \, c_2^d \, c_4)^{n-1} 
\geq \Big( \frac{1}{M} \Big)^{n-1}
= e^{-(n-1) \log M}
\geq e^{- (\log M) \, 
   \Big( \textstyle \frac{4 c_3^2 \, |x-y|^2}{c_2^2 \, t} + \frac{t}{t_0} \Big) }
,  \]
where we used (\ref{etrobin103;1}).
Then Theorem~\ref{trobin103} follows.\qed

\section{Dirichlet-to-Neumann operator} \label{Srobin4}

In this section we prove uniform estimates and H\"older continuity 
estimates for the kernel
of the semigroup generated by minus the Dirichlet-to-Neumann operator
on a domain with Lipschitz boundary.
The Dirichlet-to-Neumann operator can be associated to a general second-order 
elliptic differential operator in divergence form with real principal
coefficients and complex lower-order coefficients.
Combining these estimates with Poisson kernel bounds
for operators on $C^{1+\kappa}$-domains and H\"older continuous 
principal coefficients we can prove an extension of Theorem~\ref{trobin102}.

We first introduce the Dirichlet-to-Neumann operator which is 
associated with a general second-order elliptic operator.

Let $\Omega \subset \Ri^d$ be a bounded Lipschitz domain.
For all $k,l \in \{ 1,\ldots,d \} $ let 
$c_{kl} \colon \Omega \to \Ri$ and $a_k,b_k,a_0 \colon \Omega \to \Ci$ 
be bounded and measurable.
Suppose there exists a $\mu > 0$ such that 
$\RRe \sum_{k,l=1}^d c_{kl}(x) \, \xi_k \, \overline{\xi_l} \geq \mu \, |\xi|^2$
for all $x \in \Omega$ and $\xi \in \Ci^d$.
As in Section~\ref{Srobin2}
define the form $\gota \colon W^{1,2}(\Omega) \times W^{1,2}(\Omega) \to \Ci$ by 
\[
\gota(u,v)
= \int_\Omega \sum_{k,l=1}^d c_{kl} (\partial_k u) \, \overline{\partial_l v} 
   + \int_\Omega \sum_{k=1}^d a_k \, (\partial_k u) \, \overline v
   + \int_\Omega \sum_{l = 1}^d b_l \, u \, \overline{\partial_l \, v} 
   + \int_\Omega a_0 \, u \, \overline v
.  \]
Let $A_D$ be the m-sectorial operator in $L_2(\Omega)$ associated with 
$\gota|_{W^{1,2}_0(\Omega) \times W^{1,2}_0(\Omega)}$.
Then $A_D$ is an elliptic operator with Dirichlet boundary conditions.
Throughout this section we assume that $0 \not\in \sigma(A_D)$.

Under the above assumptions one can solve the Dirichlet problem.

\begin{prop} \label{pRobin410}
Let $\varphi \in \Tr W^{1,2}(\Omega)$.
Then there exists a unique $u \in W^{1,2}(\Omega)$ such that $\ca u = 0$
and $\tr u = \varphi$.
\end{prop}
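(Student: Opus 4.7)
The plan is to reduce the proposition to an invertibility statement for the Dirichlet realisation $A_D$ via the standard lifting trick. Since $\varphi \in \Tr W^{1,2}(\Omega)$, choose any $w \in W^{1,2}(\Omega)$ with $\tr w = \varphi$ and seek $u$ in the form $u = w + v_0$ with $v_0 \in W^{1,2}_0(\Omega)$. Then $\tr u = \varphi$ holds automatically, and the equation $\ca u = 0$ becomes
\[
\gota(v_0, v) = -\gota(w, v) \qquad \mbox{for all } v \in W^{1,2}_0(\Omega),
\]
whose right-hand side defines a bounded antilinear functional $F \in (W^{1,2}_0(\Omega))^*$ by continuity of $\gota$ on $W^{1,2}(\Omega) \times W^{1,2}(\Omega)$.

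Next, consider the bounded linear operator $\tilde A \colon W^{1,2}_0(\Omega) \to (W^{1,2}_0(\Omega))^*$ defined by $\langle \tilde A v_0, v \rangle = \gota(v_0, v)$. The proposition reduces to showing that $\tilde A$ is a bijection, since then $v_0 = \tilde A^{-1} F$ exists uniquely, and uniqueness in the statement follows because two solutions differ by an element of $\ker \tilde A$. To establish bijectivity I would invoke the Fredholm alternative. For $\lambda > 0$ sufficiently large the shifted form $\gota(\cdot, \cdot) + \lambda \, (\cdot, \cdot)_{L_2(\Omega)}$ is coercive on $W^{1,2}_0(\Omega)$, using ellipticity of the principal part together with Young's inequality to absorb the cross terms from the first-order coefficients. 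Hence, by Lax--Milgram, $\tilde A + \lambda \, \iota$ is an isomorphism from $W^{1,2}_0(\Omega)$ onto its dual, where $\iota \colon L_2(\Omega) \hookrightarrow (W^{1,2}_0(\Omega))^*$ is the canonical embedding. Since $W^{1,2}_0(\Omega) \hookrightarrow L_2(\Omega)$ is compact by Rellich--Kondrachov on the bounded Lipschitz domain $\Omega$, the operator $\tilde A$ differs from an isomorphism by a compact one and is therefore Fredholm of index zero.

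It remains to verify $\ker \tilde A = \{0\}$. If $v_0 \in \ker \tilde A$, then $\gota(v_0, v) = 0 = (0, v)_{L_2(\Omega)}$ for all $v \in W^{1,2}_0(\Omega)$; by the definition of $A_D$ this means $v_0 \in D(A_D)$ with $A_D v_0 = 0$. The hypothesis $0 \notin \sigma(A_D)$ then forces $v_0 = 0$, so $\tilde A$ is injective and, by the Fredholm alternative, surjective. The main obstacle is the coercivity estimate for the shifted form in the presence of complex lower-order coefficients; this is routine but requires careful absorption of the first-order terms via Young's inequality, with the size of $\lambda$ depending on the ellipticity constant and the $L_\infty$-bounds of the lower-order coefficients.
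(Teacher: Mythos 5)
Your proof is correct and is the standard lift--plus--Fredholm argument for the inhomogeneous Dirichlet problem: you lift $\varphi$ to some $w\in W^{1,2}(\Omega)$, reduce to solving $\tilde A v_0=-\ca w$ in $W^{1,2}_0(\Omega)$, and show $\tilde A$ is a bijection by combining a G{\aa}rding-type coercivity for the shifted form (Lax--Milgram) with the compactness of $W^{1,2}_0(\Omega)\hookrightarrow L_2(\Omega)$ (Fredholm of index zero) and the injectivity $\ker\tilde A=\{0\}$ supplied by the standing hypothesis $0\notin\sigma(A_D)$. The paper does not write this out; it merely cites Lemma~2.1 of Arendt--ter~Elst (resp.\ Behrndt--ter~Elst Lemma~3.2(a)), which prove the solvability of the Dirichlet problem by essentially the same route, so you have in effect reconstructed the proof that is being deferred to.

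One small point you leave implicit and should spell out: existence of a solution of the form $u=w+v_0$ with $v_0\in W^{1,2}_0(\Omega)$ is clear, but to get \emph{uniqueness} you must know that \emph{every} $u\in W^{1,2}(\Omega)$ with $\Tr u=\varphi$ has this form, i.e.\ that the difference of two solutions lies in $W^{1,2}_0(\Omega)$. This uses the identification $\ker\Tr=W^{1,2}_0(\Omega)$, which is a genuine (if classical) fact about bounded Lipschitz domains and not a tautology. With that noted, the argument is complete.
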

\begin{proof}
See \cite{AE9} Lemma~2.1 (or \cite{BeE3} Lemma~3.2(a)).
\end{proof}

We are now able to define the Dirichlet-to-Neumann operator~$\cn$.
Let $\varphi,\psi \in L_2(\Gamma)$.
Then we say that $\varphi \in \dom \cn$ and $\cn \varphi = \psi$ if there
exists a $u \in W^{1,2}(\Omega)$ such that $\Tr u = \varphi$, $\ca u = 0$
and $\partial_\nu u = \psi$.
The operator $\cn$ can be characterised by the form $\gota$.

\begin{prop} \label{pRobin411}
Let $\varphi,\psi \in L_2(\Gamma)$.
Then the following are equivalent.
\begin{tabeleq}
\item \label{pRobin411-1}
$\varphi \in \dom \cn$ and $\cn \varphi = \psi$.
\item \label{pRobin411-2}
There exists a $u \in W^{1,2}(\Omega)$ such that 
$\Tr u = \varphi$ and 
$\gota(u,v) = (\psi,\Tr v)_{L_2(\Gamma)}$
for all $v \in W^{1,2}(\Omega)$.
\end{tabeleq}
\end{prop}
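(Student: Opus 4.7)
The proof is a direct unpacking of the definitions of $\ca u$, the weak co-normal derivative $\partial_\nu u$, and the Dirichlet-to-Neumann operator $\cn$, so no deep input is needed and I expect no real obstacle; the task is to keep track of which test functions $v$ are used in which identity.

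For the implication \ref{pRobin411-1}$\Rightarrow$\ref{pRobin411-2}, I would start from the definition of $\cn$: pick $u \in W^{1,2}(\Omega)$ with $\Tr u = \varphi$, $\ca u = 0$ and $\partial_\nu u = \psi$. By the definition of the weak co-normal derivative (see the paragraph preceding Lemma~\ref{lRobin210}),
\[
\gota(u,v) - (\ca u, v)_{L_2(\Omega)} = (\psi, \Tr v)_{L_2(\Gamma)}
\]
for all $v \in W^{1,2}(\Omega)$. Since $\ca u = 0$ as an element of $L_2(\Omega)$, the middle term vanishes and I obtain $\gota(u,v) = (\psi, \Tr v)_{L_2(\Gamma)}$, which is exactly \ref{pRobin411-2}.

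For the converse \ref{pRobin411-2}$\Rightarrow$\ref{pRobin411-1}, suppose $u \in W^{1,2}(\Omega)$ satisfies $\Tr u = \varphi$ and $\gota(u,v) = (\psi, \Tr v)_{L_2(\Gamma)}$ for every $v \in W^{1,2}(\Omega)$. First I would restrict to $v \in W^{1,2}_0(\Omega)$. Then $\Tr v = 0$, so $\gota(u,v) = 0$, which by the definition of $\ca$ as a map $W^{1,2}(\Omega) \to (W^{1,2}_0(\Omega))^*$ means $\ca u = 0$; in particular $\ca u \in L_2(\Omega)$. Next I would plug this information back into the form identity for arbitrary $v \in W^{1,2}(\Omega)$:
\[
\gota(u,v) - (\ca u, v)_{L_2(\Omega)} = \gota(u,v) = (\psi, \Tr v)_{L_2(\Gamma)}.
\]
By the definition of the weak co-normal derivative this means $\partial_\nu u = \psi$. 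Hence $u$ witnesses that $\varphi = \Tr u \in \dom \cn$ with $\cn \varphi = \psi$, establishing \ref{pRobin411-1}.

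The only subtlety worth flagging is the uniqueness of the weak co-normal derivative, which is already guaranteed by the discussion before Lemma~\ref{lRobin210} (via surjectivity of the trace onto a dense subset of $L_2(\Gamma)$), so there is no ambiguity in reading off $\psi$ from the boundary form.
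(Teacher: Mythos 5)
Your proof is correct and is exactly the direct unpacking of the definitions of $\ca$, the weak co-normal derivative, and $\cn$ that the paper intends when it says the proof is left to the reader. Both directions are complete: the forward direction substitutes $\ca u = 0$ into the defining identity of $\partial_\nu u$, and the converse correctly tests first against $v \in W^{1,2}_0(\Omega)$ to conclude $\ca u = 0 \in L_2(\Omega)$ before reading off $\partial_\nu u = \psi$ from the full identity.
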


The proof is left to the reader.

\smallskip

If the form $\gota$ is symmetric, then the operator $\cn$ is self-adjoint
by \cite{AEKS} Theorem~4.5.
The non-symmetric extension is as follows.

\begin{prop} \label{pRobin412}
The operator $\cn$ is m-sectorial.
\end{prop}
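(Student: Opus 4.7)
The plan is to write $\cn$ as the operator associated with a densely defined closed sectorial sesquilinear form on $L_2(\Gamma)$ and then invoke the standard form representation theorem. By Proposition~\ref{pRobin410}, for every $\varphi \in \Tr W^{1,2}(\Omega)$ there is a unique harmonic lift $L\varphi \in W^{1,2}(\Omega)$ characterised by $\ca(L\varphi) = 0$ and $\Tr(L\varphi) = \varphi$, and $L$ is linear. Introduce the form $\gotb$ on $L_2(\Gamma)$ with domain $D(\gotb) = \Tr W^{1,2}(\Omega)$ by
\[
\gotb(\varphi,\psi) = \gota(L\varphi, L\psi).
\]
Since $C^\infty(\overline\Omega)|_\Gamma \subset D(\gotb)$ is dense in $L_2(\Gamma)$, the form is densely defined.

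The hard part will be closedness of $\gotb$. The key estimate I aim for is the coercivity
\[
\|L\varphi\|_{W^{1,2}(\Omega)}^2 \leq c \, \RRe\gotb(\varphi) + c \, \|\varphi\|_{L_2(\Gamma)}^2 \qquad (\varphi \in D(\gotb)),
\]
which I would prove by contradiction. Otherwise there is a sequence $(\varphi_n) \subset D(\gotb)$ with $\|L\varphi_n\|_{W^{1,2}(\Omega)} = 1$, $\RRe\gota(L\varphi_n) \to 0$ and $\varphi_n \to 0$ in $L_2(\Gamma)$. Rellich--Kondrachov allows one to extract a weakly convergent subsequence in $W^{1,2}(\Omega)$; its limit $u$ satisfies $\Tr u = 0$, so $u \in W^{1,2}_0(\Omega)$, and $\ca u = 0$, so $A_D u = 0$, and the assumption $0 \notin \sigma(A_D)$ forces $u = 0$. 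Combining this with ellipticity (which controls $\|\nabla L\varphi_n\|_{L_2(\Omega)}^2$ by $\RRe\gota_p(L\varphi_n)$ modulo absorbable lower-order terms) yields the contradiction. Granted the coercivity, if $(\varphi_n) \subset D(\gotb)$ is $\gotb$-Cauchy with $\varphi_n \to \varphi$ in $L_2(\Gamma)$, then $(L\varphi_n)$ is Cauchy in $W^{1,2}(\Omega)$ and its limit $u$ satisfies the closed conditions $\Tr u = \varphi$ and $\ca u = 0$; hence $u = L\varphi \in D(\gotb)$ and $\gotb(\varphi_n - \varphi) \to 0$.

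Sectoriality is comparatively routine. The ellipticity and the $L_\infty$-bounds on $a_k,b_k,a_0$, together with Young's inequality, yield
\[
\RRe\gotb(\varphi) \geq \tfrac{\mu}{2} \|\nabla L\varphi\|_{L_2(\Omega)}^2 - C \|L\varphi\|_{L_2(\Omega)}^2, \qquad |\IIm\gotb(\varphi)| \leq C\bigl(\|\nabla L\varphi\|_{L_2(\Omega)}^2 + \|L\varphi\|_{L_2(\Omega)}^2\bigr),
\]
so that $\gotb + \lambda(\cdot,\cdot)_{L_2(\Gamma)}$ is sectorial for some sufficiently large $\lambda > 0$, using the coercivity above to absorb the $\|L\varphi\|_{L_2(\Omega)}^2$ term.

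Finally I would identify the operator associated with $\gotb$ as $\cn$. For every $v \in W^{1,2}(\Omega)$ the difference $v - L(\Tr v)$ lies in $W^{1,2}_0(\Omega)$, and $\ca(L\varphi) = 0$, so
\[
\gota(L\varphi, v) = \gota(L\varphi, L(\Tr v)) = \gotb(\varphi, \Tr v).
\]
Together with Proposition~\ref{pRobin411} and the density of $\Tr W^{1,2}(\Omega)$ in $L_2(\Gamma)$, this shows that $\varphi \in D(\gotb)$ with $\gotb(\varphi,\chi) = (\psi,\chi)_{L_2(\Gamma)}$ for all $\chi \in D(\gotb)$ is equivalent to $\varphi \in D(\cn)$ with $\cn\varphi = \psi$. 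Hence $\cn$ coincides with the m-sectorial operator associated with the closed sectorial form $\gotb$.
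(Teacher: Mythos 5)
Your strategy is correct in outline and close in spirit to the paper's: both proofs hinge on the observation that the trace map is injective on the solution space $V(\gota)=\{u\in W^{1,2}(\Omega):\gota(u,v)=0 \text{ for } v\in W^{1,2}_0(\Omega)\}$ (equivalently, that $L$ is well defined and injective, which uses $0\notin\sigma(A_D)$), followed by a compactness argument that turns the G\aa rding inequality for $\gota$ into a $V(\gota)$-coercivity estimate controlled by $\|\Tr u\|_{L_2(\Gamma)}$. The paper delegates the construction of the boundary form and the verification of m-sectoriality to the abstract result \cite{AE2} Corollary~2.2, and obtains the key estimate via Ehrling's lemma; you instead make the boundary form $\gotb$ and its closedness, sectoriality, and the identification of the associated operator with~$\cn$ explicit. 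That is a legitimate and more self-contained route; what you lose is brevity, what you gain is transparency.

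There is, however, a genuine error in the coercivity estimate you \emph{aim for}, namely
\[
\|L\varphi\|_{W^{1,2}(\Omega)}^2 \leq c\,\RRe\gotb(\varphi) + c\,\|\varphi\|_{L_2(\Gamma)}^2
\]
with a single constant $c$ multiplying both terms. This inequality is false in general. Since $\gota$ only satisfies a G\aa rding inequality
$\RRe\gota(u)\geq 2\mu_1\|u\|_{W^{1,2}(\Omega)}^2-\omega_1\|u\|_{L_2(\Omega)}^2$, the quantity $\RRe\gotb(\varphi)=\RRe\gota(L\varphi)$ can be negative and of large magnitude while $\|\varphi\|_{L_2(\Gamma)}$ stays moderate, so that $\RRe\gotb(\varphi)+\|\varphi\|_{L_2(\Gamma)}^2<0$; no $c>0$ can then make your inequality hold. (For instance, take $d=1$, $\Omega=(0,1)$, $a_k=b_k=0$ and $a_0$ a sufficiently negative constant with $0\notin\sigma(A_D)$: the harmonic lift $u$ of $\varphi=(1,1)$ gives $\RRe\gotb(\varphi)=-2\omega\tan(\omega/2)$ with $\omega=\sqrt{-a_0}$, which can be made arbitrarily negative while $\|\varphi\|_{L_2(\Gamma)}^2=2$.) A further symptom is that your claimed consequence of the failure of the estimate, namely a sequence with simultaneously $\RRe\gota(L\varphi_n)\to 0$ \emph{and} $\varphi_n\to 0$, does not follow: dividing by $c=n$ only gives $\RRe\gotb(\varphi_n)+\|\varphi_n\|_{L_2(\Gamma)}^2<1/n$, which does not separate the two terms when $\RRe\gotb(\varphi_n)$ is negative. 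The correct statement, which is what the paper's Ehrling argument produces and is exactly what your subsequent closedness argument needs, keeps the ellipticity constant fixed in front of the energy and lets only the boundary constant grow:
\[
\RRe\gotb(\varphi) \geq \mu_1\,\|L\varphi\|_{W^{1,2}(\Omega)}^2 - C\,\|\varphi\|_{L_2(\Gamma)}^2 .
\]
Proving \emph{this} by contradiction works: with $\|L\varphi_n\|_{W^{1,2}(\Omega)}=1$ and $\RRe\gotb(\varphi_n)<\mu_1-n\|\varphi_n\|_{L_2(\Gamma)}^2$, the G\aa rding bound forces $n\|\varphi_n\|_{L_2(\Gamma)}^2\leq\omega_1-\mu_1$, hence $\varphi_n\to 0$, and also $\|L\varphi_n\|_{L_2(\Omega)}^2\geq\mu_1/\omega_1>0$, so the Rellich subsequence limit $u$ is nonzero yet lies in $W^{1,2}_0(\Omega)\cap\ker\ca$, contradicting $0\notin\sigma(A_D)$. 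With this fix your proof goes through.
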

\begin{proof}
There exist $\mu_1,\omega_1 > 0$ such that 
\[
\RRe \gota(u)
\geq 2 \mu_1 \, \|u\|_{W^{1,2}(\Omega)}^2 - \omega_1 \, \|u\|_{L_2(\Omega)}^2
\]
for all $u \in W^{1,2}(\Omega)$.
By Proposition~\ref{pRobin410} we can define the map 
$\gamma_D \colon H^{1/2}(\Gamma) \to W^{1,2}(\Omega)$
by $\gamma_D(\varphi) = u$, where $u \in W^{1,2}(\Omega)$ is such that 
$\ca u = 0$ and $\Tr u = \varphi$.
As in \cite{AE2} Section~2 define
$V(\gota ) = \{ u \in W^{1,2}(\Omega) : \gota (u,v) = 0 \mbox{ for all } v \in W^{1,2}_0(\Omega) \} $.
Then $V(\gota)$ is closed in $W^{1,2}(\Omega)$.
If $u \in W^{1,2}(\Omega)$, then
$u = \gamma_D(\Tr u) + (u - \gamma_D(\Tr u)) \in V(\gota) + W^{1,2}_0(\Omega)$.
Therefore $W^{1,2}(\Omega) = V(\gota) + W^{1,2}_0(\Omega)$.
Also $V(\gota) \cap W^{1,2}_0(\Omega) = \{ 0 \} $ since $0 \in \rho(A_D)$.
So $\Tr|_{V(\gota)} \colon V(\gota) \to L_2(\Gamma)$ is injective.
By Ehrling's lemma there exists a $c > 0$ such that 
\[
\|u\|_{L_2(\Omega)}^2
\leq \frac{\mu_1}{\omega_1} \|u\|_{W^{1,2}(\Omega)}^2 + c \, \|\Tr u\|_{L_2(\Omega)}^2
\]
for all $u \in V(\gota)$.
Then 
\begin{equation}
\RRe \gota(u) 
\geq \mu_1 \, \|u\|_{W^{1,2}(\Omega)}^2 - c \, \omega_1 \, \|\Tr u\|_{L_2(\Gamma)}^2
\label{epRobin412;1}
\end{equation}
for all $u \in V(\gota)$.
Now the statement follows from \cite{AE2} Corollary~2.2
and Proposition~\ref{pRobin411}.
\end{proof}

\begin{remark} \label{rRobin413}
Propositions~\ref{pRobin410}, \ref{pRobin411} and \ref{pRobin412}
remain valid if the principle coefficients $c_{kl}$ are 
complex valued.
The proofs are word-by-word the same.
\end{remark}

Let $T$ be the semigroup generated by the operator~$-\cn$.
Recall that we assume that $0 \not\in \sigma(A_D)$.
The main result of this section is the following theorem.

\begin{thm} \label{trobin401}
Suppose $d \geq 3$.
Then there exist $\kappa \in (0,1)$ and $c,\omega > 0$ such that 
$T_t \, L_2(\Gamma) \subset C^\kappa(\Gamma)$,
\begin{equation}
\|T_t\|_{L_2(\Gamma) \to C^\kappa(\Gamma)}
\leq c \, t^{-\frac{d-1}{2}} \, t^{-\kappa} \, e^{\omega t}
\label{etrobin401;1} 
\end{equation}
and 
\begin{equation}
\|T_t\|_{L_2(\Gamma) \to L_\infty(\Gamma)}
\leq c \, t^{-\frac{d-1}{2}} \, e^{\omega t}
\label{etrobin401;2} 
\end{equation}
for all $t > 0$.
\end{thm}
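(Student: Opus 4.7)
The plan is to apply the regularity Proposition~\ref{probin301} to the harmonic extension $u_t := \gamma_D(T_t\varphi) \in W^{1,2}(\Omega)$ of $T_t\varphi$, iterate the resulting Morrey-type estimates in the spirit of Lemma~\ref{lrobin304} up to the Campanato regime, and conclude H\"older regularity of $u_t$ on $\overline\Omega$. Restricting $\Tr u_t = T_t\varphi$ to $\Gamma$ will then yield both (\ref{etrobin401;1}) and (\ref{etrobin401;2}).

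I would first use Proposition~\ref{pRobin411} to rewrite the Dirichlet-to-Neumann identity $\gota(u_t,v) = (\cn T_t\varphi,\Tr v)_{L_2(\Gamma)}$ in the form required by Proposition~\ref{probin301}:
\[
\gota_p(u_t,v) = (f,v)_{L_2(\Omega)} + \sum_{i=1}^d (f_i,\partial_i v)_{L_2(\Omega)} + \int_\Gamma \Tr g_t \, \overline{\Tr v}
\]
for all $v \in W^{1,2}(\Omega)$, with $f = -\sum_k a_k\partial_k u_t - a_0 u_t$, $f_i = -b_i u_t$, $\beta \equiv 1$, and $g_t := \gamma_D(\cn T_t\varphi)$. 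For $t>0$ the function $g_t$ is well-defined in $W^{1,2}(\Omega)$: m-sectoriality (Proposition~\ref{pRobin412}) renders $T$ a bounded holomorphic $C_0$-semigroup, producing the analyticity bounds $\|\cn^n T_t\varphi\|_{L_2(\Gamma)} \leq c_n\,t^{-n}\,e^{\omega t}\,\|\varphi\|_{L_2(\Gamma)}$ for every $n \in \Ni$, which places $\cn T_t\varphi$ in the trace space. A key observation is that $g_t$ itself satisfies an equation of the same shape, with boundary datum $\cn^2 T_t\varphi$, so its Morrey norms can be bootstrapped in parallel with those of $u_t$.

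I would then set up an induction $Q(\gamma)$, modelled on Lemma~\ref{lrobin304}, asserting uniform pointwise Morrey bounds of the form
\[
\|u_t\circ\Phi^{-1}\|_{M,\gamma,x,E^-,1/2} + t^{1/2}\,\|\nabla(u_t\circ\Phi^{-1})\|_{M,\gamma,x,E^-,1/2} \leq c\,t^{-\alpha(\gamma)}\,e^{\omega t}\,\|\varphi\|_{L_2(\Gamma)}
\]
on boundary charts, together with matching interior bounds from \cite{ERe2} Proposition~3.2, for a scaling exponent $\alpha(\gamma)$ chosen so that at the top of the admissible range the Morrey--Sobolev embedding reproduces the target scalings in (\ref{etrobin401;1}) and (\ref{etrobin401;2}). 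The base step $\gamma = 0$ combines the $L_2(\Gamma)\to L_2(\Omega)$ continuity of the harmonic extension with the analyticity bound $\|T_t\varphi\|_{H^{1/2}(\Gamma)} \leq c\,t^{-1/2}\,e^{\omega t}\,\|\varphi\|_{L_2(\Gamma)}$. The inductive step applies Proposition~\ref{probin301} with $\varepsilon \sim t^{1/2}$: the $f$-, $f_i$- and $\nabla u_t$-terms are handled as in the proof of Lemma~\ref{lrobin304}, from $Q(\gamma)$ together with analogues of Lemma~\ref{lrobin302}, while the $g_t$-terms are absorbed by the parallel induction for $g_t$ combined with the analyticity estimates for $\cn^n T_t\varphi$.

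The principal obstacle is the boundary term. Since $g_t$ inherits only the crude scaling $\|g_t\|_{L_2(\Omega)} \leq c\,t^{-1}\,\|\varphi\|_{L_2(\Gamma)}$, the naive choice $\tilde\gamma = \gamma+\delta$ in Proposition~\ref{probin301} destroys the scaling budget in low dimensions; for $d = 3$ the constraint $\gamma+\delta < d-2+2\kappa$ is already so tight that the iteration cannot even be launched with $\tilde\gamma = \gamma+\delta$. This is precisely the reason that Proposition~\ref{probin301} was stated with the extra parameter $\tilde\gamma \geq \gamma+\delta$: by picking $\tilde\gamma$ close to $d$ one can use the much sharper Morrey bound on $g_t\circ\Phi^{-1}$ coming from the parallel induction and absorb the mismatch through the factor $\varepsilon^{\tilde\gamma-\gamma-\delta}$. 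The hypothesis $d \geq 3$ provides just enough room inside $[0, d-2+2\kappa)$ for $\gamma$ to cross the H\"older threshold at which Campanato embedding yields H\"older continuity of $u_t$ on $\overline\Omega$; tracing onto $\Gamma$ and a standard compactness argument over boundary and interior charts then complete the proof of (\ref{etrobin401;1}) and (\ref{etrobin401;2}).
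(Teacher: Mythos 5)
Your proposal follows the paper's route: extend $T_t\varphi$ to a $\gota$-harmonic function on $\Omega$, apply the regularity Proposition~\ref{probin301} to that extension, iterate the pointwise Morrey estimates as in Lemma~\ref{lrobin304}, and finish with the Campanato--H\"older embedding and the trace. You also correctly identify that the extra parameter $\tilde\gamma$ in Proposition~\ref{probin301} is precisely the knob that makes the Dirichlet-to-Neumann case go through. Two of your choices differ from the paper in inessential ways: the paper works with $u_{2t,\varphi}$ and uses $\cn T_{2t}\varphi = T_t\,\cn T_t\varphi$ to take $g = u_{t,\,\cn T_t\varphi}$, so that all the required Morrey bounds for $g$ come from the single induction hypothesis $P(\gamma)$ applied with $\varphi$ replaced by $\cn T_t\varphi$ and the analyticity bound $\|\cn T_t\varphi\|_{L_2(\Gamma)}\lesssim t^{-1}e^{\omega t}\|\varphi\|_{L_2(\Gamma)}$; no separate parallel bootstrap for $g_t$ is needed, and your $g_t=\gamma_D(\cn T_t\varphi)$ is the same object after the shift $t\mapsto 2t$.

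There is, however, one point where your plan as stated would break. You propose to ``pick $\tilde\gamma$ close to $d$''. That is not feasible at the early steps of the induction: to control $\|g\circ\Phi^{-1}\|_{M,\tilde\gamma,\cdot}$ via Lemma~\ref{lrobin405}\ref{ldtnc306-2} one needs $P(\gamma')$ for some $\gamma'\geq\tilde\gamma-2$, but at the step from $P(\gamma)$ to $P(\gamma+\delta)$ only $P(\gamma)$ is available, so $\tilde\gamma\leq\gamma+2$; for small $\gamma$ this is nowhere near $d$. The paper's actual choice is $\tilde\gamma = \gamma+\delta$ if $\gamma+\delta\geq 2$, and $\tilde\gamma = 2$ otherwise. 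A short computation of the exponents in Lemma~\ref{lrobin406} shows that the boundary term scales exactly as $t^{-(\gamma+\delta+1)/2}$ whenever $\tilde\gamma\geq 2$, and loses a factor $t^{-(2-\tilde\gamma)/2}$ whenever $\tilde\gamma<2$. The hypothesis $d\geq 3$ is there solely so that $\tilde\gamma=2$ lies in the admissible range $[0,d)$; it has nothing to do with ``room for $\gamma$ inside $[0,d-2+2\kappa)$'', since the iteration reaches the H\"older threshold $\gamma+2>d$ for any $d\geq 2$. Relatedly, your claim that for $d=3$ the iteration ``cannot even be launched'' with $\tilde\gamma=\gamma+\delta$ overstates the problem: the iteration runs but accumulates a nonoptimal time singularity, which is exactly what happens, unavoidably, when $d=2$ (Lemma~\ref{lrobin406}\ref{ldtnc304-2}).
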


The bounds (\ref{etrobin401;2}) easily follows by interpolation
of the bounds (\ref{etrobin401;1}) and the bounds 
$\|T_t\|_{L_2(\Gamma) \to L_2(\Gamma)} \leq c' \, e^{\omega' t}$.
So it remains to prove the H\"older bounds (\ref{etrobin401;1}).

In case $d=2$, then we can also prove H\"older bounds, but unfortunately 
the singularity in $t$ in (\ref{etrobin401;1}) is not optimal, 
as we loose an~$\varepsilon$.
Since the proof is almost the same, we consider the case $d \geq 2$ in 
the remainder of this section.

If $t \in (0,\infty)$ and $\varphi \in L_2(\Gamma)$, then 
$T_t \varphi \in \dom(\cn)$.
Hence there exists a unique $u_{t,\varphi} \in W^{1,2}(\Omega)$ such that 
$\Tr u_{t,\varphi} = T_t \varphi$ and 
\[
\gota (u_{t,\varphi}, v) = (\cn \, T_t \varphi, \Tr v)_{L_2(\Gamma)}
\]
for all $v \in W^{1,2}(\Omega)$.
The key idea for the proof of (\ref{etrobin401;1}) is to estimate $u_{t,\varphi}$.

\begin{lemma} \label{lrobin402}
There exist $\tilde c_0,\tilde \omega_0 > 0$ such that 
\[
\|u_{t,\varphi}\|_{L_2(\Omega)}
\leq \tilde c_0 \, t^{-1/2} \, e^{\tilde \omega_0 t} \, \|\varphi\|_{L_2(\Gamma)}
\quad \mbox{and} \quad
\|\nabla u_{t,\varphi}\|_{L_2(\Omega)}
\leq \tilde c_0 \,  t^{-1/2}e^{\tilde \omega_0 t} \, \|\varphi\|_{L_2(\Gamma)}
\]
for all $t > 0$ and $\varphi \in L_2(\Gamma)$.
\end{lemma}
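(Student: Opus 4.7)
The plan is to exploit the fact that $u_{t,\varphi}$ is the harmonic extension (with respect to $\ca$) of the boundary datum $T_t\varphi$, lift the problem to the form $\gota$, and use the coercivity estimate (\ref{epRobin412;1}) on the subspace $V(\gota)$ together with holomorphy of~$T$.

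First I would observe that $u_{t,\varphi} \in V(\gota)$. Indeed, by definition $\gota(u_{t,\varphi},v) = (\cn T_t\varphi, \Tr v)_{L_2(\Gamma)}$ for all $v \in W^{1,2}(\Omega)$, and this right-hand side vanishes for $v \in W^{1,2}_0(\Omega)$ since $\Tr v = 0$ there; this is precisely membership of $V(\gota)$. Next I would test the defining identity with $v = u_{t,\varphi}$ itself to obtain
\[
\gota(u_{t,\varphi}) = (\cn T_t\varphi, T_t\varphi)_{L_2(\Gamma)}.
\]

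The second ingredient is holomorphy of the semigroup~$T$. Since Proposition~\ref{pRobin412} shows $\cn$ is m-sectorial, $T$ is a bounded holomorphic $C_0$-semigroup on $L_2(\Gamma)$, so there exist constants $c_1,\omega_1 > 0$ with
\[
\|T_t\varphi\|_{L_2(\Gamma)} \leq c_1 \, e^{\omega_1 t} \|\varphi\|_{L_2(\Gamma)}
\quad \text{and} \quad
\|\cn T_t\varphi\|_{L_2(\Gamma)} \leq c_1 \, t^{-1} e^{\omega_1 t} \|\varphi\|_{L_2(\Gamma)}.
\]
Cauchy--Schwarz on the pairing above then gives $|\gota(u_{t,\varphi})| \leq c_1^2 \, t^{-1} e^{2\omega_1 t} \|\varphi\|_{L_2(\Gamma)}^2$.

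Finally I would apply the coercivity estimate (\ref{epRobin412;1}) on $V(\gota)$, which yields
\[
\mu_1 \, \|u_{t,\varphi}\|_{W^{1,2}(\Omega)}^2 \leq \RRe \gota(u_{t,\varphi}) + c\,\omega_1 \, \|T_t\varphi\|_{L_2(\Gamma)}^2,
\]
and then substitute the two bounds from the previous paragraph. Absorbing constants produces $\|u_{t,\varphi}\|_{W^{1,2}(\Omega)} \leq \tilde c_0 \, t^{-1/2} e^{\tilde\omega_0 t}\|\varphi\|_{L_2(\Gamma)}$, which delivers both claimed estimates at once. The one point that needs a little care is confirming membership $u_{t,\varphi} \in V(\gota)$ so that (\ref{epRobin412;1}) is available, but this is immediate from the weak formulation as noted; the rest is a routine assembly of coercivity plus standard holomorphic semigroup bounds.
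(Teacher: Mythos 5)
Your argument is correct and follows essentially the same route as the paper: both rest on the coercivity estimate (\ref{epRobin412;1}) on the subspace $V(\gota)$, the identity $\gota(u_{t,\varphi}) = (\cn T_t\varphi, T_t\varphi)_{L_2(\Gamma)}$ obtained by testing with $v = u_{t,\varphi}$, and the analyticity of $T$ to control $\|\cn T_t\varphi\|_{L_2(\Gamma)}$ by $t^{-1}e^{\omega t}\|\varphi\|_{L_2(\Gamma)}$. The only cosmetic issue is your reuse of the symbol $\omega_1$ for the holomorphic semigroup constant when it already names the constant in (\ref{epRobin412;1}), but the argument is intact.
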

\begin{proof}
As in \cite{AE2} Section~2 define
$V(\gota ) = \{ u \in W^{1,2}(\Omega) : \gota (u,v) = 0 \mbox{ for all } v \in W^{1,2}_0(\Omega) \} $.
Let $c, \mu_1,\omega_1 > 0$ be as in (\ref{epRobin412;1}).
Then
\[
\mu_1 \, \|u\|_{W^{1,2}(\Omega)}^2
\leq \RRe \gota (u) + c \, \omega_1 \, \|\Tr u\|_{L_2(\Gamma)}^2
\]
for all $u \in V(\gota)$.
In particular
\[
\mu_1 \, \|u_{t,\varphi}\|_{W^{1,2}(\Omega)}^2
\leq \RRe (\cn \, T_t \varphi, T_t \varphi)_{L_2(\Gamma)}
+ c \, \omega_1 \, \|T_t \varphi\|_{L_2(\Gamma)}^2
\]
and the lemma follows from the analyticity of the semigroup $T$.
\end{proof}

By a compactness argument Theorem~\ref{trobin401} is a consequence of the 
next proposition.

\begin{prop} \label{probin403}
Let $U \subset \Ri^d$ be an open set and $\Phi$ a 
bi-Lipschitz map from an open neighbourhood of $\overline U$ onto 
an open subset of $\Ri^d$ such that $\Phi(U) = E$ and 
$\Phi(\Omega \cap U) = E^-$.
\begin{tabel}
\item \label{pdtnc303-1}
If $d \geq 3$,
then there exist $c,\delta_0,\omega > 0$ and $\kappa \in (0,1)$ such that 
\[
|(T_t \varphi)(x) - (T_t \varphi)(y)|
\leq c \, t^{-\frac{d-1}{2}} \, t^{- \kappa} \, e^{\omega t} \, 
   \|\varphi\|_{L_2(\Gamma)} \, |x-y|^\kappa
\]
for all $t > 0$, $\varphi \in L_2(\Gamma)$ and $x,y \in \Gamma \cap \Phi^{-1}(\frac{1}{2} E)$ 
with $|x-y| \leq \delta_0$.
\item \label{pdtnc303-2}
If $d =2$,
then for all $\varepsilon > 0$ there exist $c,\delta_0,\omega > 0$ and $\kappa \in (0,1)$ such that 
\[
|(T_t \varphi)(x) - (T_t \varphi)(y)|
\leq c \, t^{-\frac{d-1}{2}} \, t^{- \kappa} \, t^{-\varepsilon} \, e^{\omega t} \, 
      \|\varphi\|_{L_2(\Gamma)} \, |x-y|^\kappa
\]
for all $t > 0$, $\varphi \in L_2(\Gamma)$ and $x,y \in \Gamma \cap \Phi^{-1}(\frac{1}{2} E)$ 
with $|x-y| \leq \delta_0$.
\end{tabel}
\end{prop}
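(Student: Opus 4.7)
The plan is to iterate Proposition~\ref{probin301} to derive Morrey regularity of $u := u_{t,\varphi}$ near $\Gamma$, and then read off H\"older continuity of $u\circ\Phi^{-1}$ on $\tfrac12 E^-$ via Campanato embedding; restricting to $\Gamma$ gives the pointwise estimate for $\Tr u = T_t \varphi$.

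First I rewrite the equation for $u$ in the template of Proposition~\ref{probin301}. Starting from $\gota(u,v) = (\cn T_t\varphi, \Tr v)_{L_2(\Gamma)}$ and pulling the lower-order form terms to the right-hand side gives
\[
\gota_p(u,v) = (f,v)_{L_2(\Omega)} + \sum_{i=1}^d (f_i,\partial_i v)_{L_2(\Omega)} + \int_\Gamma \Tr g \, \overline{\Tr v},
\]
with $f = -\sum_k a_k \partial_k u - a_0 u$, $f_i = -b_i u$, $\beta \equiv 1$, and any $g \in W^{1,2}(\Omega)$ with $\Tr g = \cn T_t \varphi$. The convenient choice is $g = u^{(1)}$, the $V(\gota)$-extension of $\cn T_t\varphi$. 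The proof of Lemma~\ref{lrobin402}, combined with analyticity of $T$ on $L_2(\Gamma)$ (which yields $\|\cn^m T_t\varphi\|_{L_2(\Gamma)} \leq c_m\,t^{-m}\,e^{\omega t}\|\varphi\|_{L_2(\Gamma)}$), delivers $\|u^{(1)}\|_{W^{1,2}(\Omega)} \leq c\,t^{-3/2}\,e^{\omega t}\|\varphi\|_{L_2(\Gamma)}$, and more generally $\|u^{(m)}\|_{W^{1,2}(\Omega)} \leq c_m\,t^{-(2m+1)/2}\,e^{\omega t}\|\varphi\|_{L_2(\Gamma)}$ for each $m$.

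For part (a) with $d \geq 3$, Sobolev embedding $W^{1,2}(\Omega) \hookrightarrow L_{2d/(d-2)}(\Omega)$ together with H\"older's inequality gives
\[
\|u^{(1)} \circ \Phi^{-1}\|_{M,2,x,E^-,\frac12} \leq c\,\|u^{(1)}\|_{W^{1,2}(\Omega)}
\]
uniformly in $x \in \tfrac12 E^-$. Now apply Proposition~\ref{probin301} iteratively, in the style of Lemma~\ref{lrobin304}: for each $\gamma < d-2+2\kappa$, prove inductively that both $\|u \circ \Phi^{-1}\|_{M,\gamma,x,E^-,\frac12}$ and $\|\nabla(u\circ\Phi^{-1})\|_{M,\gamma,x,E^-,\frac12}$ are bounded by $c\,t^{-\alpha(\gamma)}\,e^{\omega t}\|\varphi\|_{L_2(\Gamma)}$ with a concrete power $\alpha(\gamma)$; the step from $\gamma$ to $\gamma+\delta$ uses first Lemma~6.2 of \cite{ERe2} for the Campanato norm of $u$ and then Proposition~\ref{probin301} for $\nabla u$, with the $g$-terms supplied by the Sobolev bound on $u^{(1)}$. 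When $d = 3$ the fixed choice $\tilde\gamma = 2$ suffices since $d-2+2\kappa < 2$; when $d \geq 4$ the target $\gamma$ exceeds~$2$, so the same scheme must also be run on $u^{(1)}$ (with $u^{(2)}$ in the role of $g$), and so on, a finite nested procedure that terminates because $\gamma + \delta$ crosses $d - 2$ in finitely many $\delta$-steps. After the last boost, $\|\nabla(u\circ\Phi^{-1})\|_{M,\gamma,x,E^-,\frac12}$ is controlled for some $\gamma > d-2$, and Campanato embedding yields the pointwise H\"older estimate exactly as at the end of the proof of \cite{ERe2} Proposition~7.2. Careful bookkeeping of the $\varepsilon$-choices (taking $\varepsilon \sim t^{1/4}$ at each iteration) together with the $t^{-(2m+1)/2}$ factors from analyticity for each iterate $u^{(m)}$ produces the claimed power $t^{-(d-1)/2 - \kappa}$.

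For part (b) with $d = 2$, Sobolev embedding gives $W^{1,2}(\Omega) \hookrightarrow L_p(\Omega)$ for every finite $p$ but fails at $p = \infty$, so the Sobolev bound on $\|u^{(1)} \circ \Phi^{-1}\|_{M,\tilde\gamma,\cdot}$ is available only for $\tilde\gamma < 2 = d$, with constant blowing up as $\tilde\gamma \uparrow 2$. The condition $\tilde\gamma \geq \gamma + \delta$ combined with $\gamma + \delta < 2\kappa < 2$ permits the choice $\tilde\gamma = 2 - \eta$ for arbitrarily small $\eta > 0$, at the price of an extra factor $t^{-c\eta}$ in the final estimate; this is exactly the $t^{-\varepsilon}$ loss in the statement of~(b). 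The main obstacle in both parts is the bookkeeping of this parallel iteration on the sequence of harmonic extensions $(u^{(m)})_{m \geq 0}$: each bound on $u^{(m)}$ at a Morrey index $\gamma$ depends on a bound on $u^{(m+1)}$ at the same index, and one must verify that only finitely many levels are needed so the exponents $t^{-(2m+1)/2}$ do not drift out of control, and that the accumulated powers of $t$ combine to give precisely $(d-1)/2 + \kappa$ (resp.\ $(d-1)/2 + \kappa + \varepsilon$).
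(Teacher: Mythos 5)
The crucial ingredient you are missing is the paper's time-splitting trick. Writing $T_{2t}\varphi = T_t(T_t\varphi)$ and $\cn T_{2t}\varphi = T_t\,\cn\,T_t\varphi = \Tr u_{t,\cn T_t\varphi}$, the paper rewrites the equation for $u_{2t,\varphi}$ as
\[
\gota_p(u_{2t,\varphi},v) = (f,v)_{L_2(\Omega)} + \sum_k (f_k,\partial_k v)_{L_2(\Omega)} + \int_\Gamma (\Tr u_{t,\cn T_t\varphi})\,\overline{\Tr v},
\]
so the function $g$ in Proposition~\ref{probin301} is $u_{t,\cn T_t\varphi}$ \emph{itself a member of the family} $(u_{s,\psi})$. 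The induction hypothesis $P(\gamma)$ therefore applies directly to the $g$-terms (just at $\psi = \cn T_t\varphi$, with a single extra factor $t^{-1}$ from analyticity of $T$), giving a self-contained one-parameter induction $P(\gamma)\Rightarrow P(\gamma+\delta)$. Your choice $g = u^{(1)} = \gamma_D(\cn T_t\varphi)$ is the \emph{unsmoothed} harmonic extension, which is not in the family $(u_{s,\psi})$; consequently you are forced into the nested cascade over $u^{(m)} = \gamma_D(\cn^m T_t\varphi)$ that you identify yourself as ``the main obstacle.'' That cascade is exactly what the paper's choice of $g$ is designed to avoid.

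Beyond missing this idea, the proposal leaves the decisive bookkeeping undischarged: you assert but do not show that the nested levels $(u^{(m)})_m$ terminate after finitely many steps, nor do you track how the $t^{-(2m+1)/2}$ factors and the per-step $\varepsilon$-gains combine to yield exactly $t^{-(d-1)/2-\kappa}$. (Your stated choice $\varepsilon\sim t^{1/4}$ is also the Robin-section normalisation; in this section the correct scale is $\varepsilon\sim t^{1/2}$, consistent with the hypothesis $P(\gamma)$ carrying the exponent $t^{-(\gamma+1)/2}$.) The remaining ingredients you mention — Lemma~\ref{lrobin402} and analyticity for the $W^{1,2}$ bounds, the Campanato/Morrey machinery from \cite{ERe2}, the $\tilde\gamma = 2$ (resp.\ $\tilde\gamma = 2 - 2\eta$ for $d=2$) choice and the resulting $\varepsilon$-loss in two dimensions — are in the right direction and match the paper, but the nested-induction scaffolding would have to be carried out in full to constitute a proof, and the single-induction route through $g = u_{t,\cn T_t\varphi}$ is both what the paper does and substantially simpler.
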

\begin{proof}
There exists an $M > 0$ such that 
$(C,a,b,a_0) \in \ce(\Omega,\mu,M)$.
Let $\kappa \in (0,1)$ be as in Proposition~\ref{probin301}.
Let $K \in [1,\infty)$ be larger than the Lipschitz constant 
of $\Phi|_{\Omega \cap U}$ and $\Phi^{-1}|_{E^-}$.
For all $\gamma \in [0,d-2+2\kappa)$ let $P(\gamma)$ be the hypothesis 
\begin{list}{}{\leftmargin=1.8cm \rightmargin=1.8cm \listparindent=12pt}
\item
There exist $c_\gamma,\omega_\gamma > 0$ such that 
\[
\|\nabla (u_{t,\varphi} \circ \Phi^{-1})\|_{M,\gamma,x, E^-,\frac{1}{2}}
\leq c_\gamma \, t^{-\frac{\gamma + 1}{2}} \, e^{\omega_\gamma t} \, \|\varphi\|_{L_2(\Gamma)}
\]
for all $t > 0$, $\varphi \in L_2(\Gamma)$
and $x \in \frac{1}{2} \, E^-$.
\end{list}
Clearly $P(0)$ is valid by Lemma~\ref{lrobin402}.

We need three lemmas.

\begin{lemma} \label{lrobin404}
There exists a $c' > 0$ such that 
\[
\|u\|_{\cm,\gamma+\delta,x,E^-,\frac{1}{2}}
\leq c' \, \Big( \varepsilon^{2-\delta} \|\nabla u\|_{M,\gamma,x,E^-,\frac{1}{2}}
+ \varepsilon^{-(\gamma + \delta - 2)} \|\nabla u\|_{L_2(E^-)} \Big)
\]
for all $\gamma \in [0,d)$ and $\delta \in [0,2]$,
$\varepsilon \in (0,1]$, $u \in W^{1,2}(E^-)$ and $x \in \frac{1}{2} \, E^-$
with $\gamma + \delta \geq 2$.
\end{lemma}
\begin{proof}
By the Neumann type Poincar\'e inequality of \cite{ERe2} Lemma~6.1(a)
there exists a $c > 0$ such that 
\[
\int_{E^-(x_0,R)} |u - \langle u \rangle_{E^-(x_0,R)}|^2
\leq c \, R^2 \int_{E^-(x_0,R)} |\nabla u|^2
\]
for all $x_0 \in \frac{1}{2} \, E^-$, $R \in (0,\frac{1}{2}]$ and 
$u \in W^{1,2}(E^-)$.

Now we prove the lemma.
If $r \in (0,\frac{1}{2} \, \varepsilon^2]$, then
\begin{eqnarray*}
r^{-(\gamma+\delta)} \int_{E^-(x,r)} |u - \langle u\rangle_{E^-(x,r)}|^2
& \leq & c \, r^{2-\delta} r^{-\gamma} \int_{E^-(x,r)} |\nabla u|^2  
\leq  c \,  \varepsilon^{2(2-\delta)} \|\nabla u\|_{M,\gamma,x,E^-,\frac{1}{2}}^2
\end{eqnarray*}
Alternatively,
\[
\int_{E^-(x,r)} |u - \langle u\rangle_{E^-(x,r)}|^2
\leq c \, r^2 \int_{E^-(x,r)} |\nabla u|^2
\leq 2^{\gamma + \delta - 2} \, \varepsilon^{-2(\gamma + \delta - 2)} \, 
     \|\nabla u\|_{L_2(E^-)}^2 \, r^{\gamma+\delta}
\]
if $r \in [\frac{1}{2} \, \varepsilon^2,\frac{1}{2}]$, from which the lemma follows.
\end{proof}

\begin{lemma} \label{lrobin405}
Adopt the assumptions and notation of Proposition~\ref{probin403}.
\begin{tabel}
\item \label{ldtnc306-1}
If $\gamma \in [0,2] \cap [0,d)$, then there exist $c',\omega' > 0$ such that 
\[
\|u_{t,\varphi} \circ \Phi^{-1}\|_{M,\gamma ,x, E^-,\frac{1}{2}}
\leq c' \, t^{- 1/2} \, e^{\omega' t} \, \|\varphi\|_{L_2(\Gamma)}
\]
for all $t > 0$, $\varphi \in L_2(\Gamma)$ and $x \in \frac{1}{2} \, E^-$.
\item \label{ldtnc306-2}
Let $\gamma \in [0,d)$ and $\delta \in [0,2]$ with $\gamma + \delta < d$.
Suppose that $P(\gamma)$ is valid.
Then there exist $c',\omega' > 0$ such that 
\[
\|u_{t,\varphi} \circ \Phi^{-1}\|_{M,\gamma+\delta,x, E^-,\frac{1}{2}}
\leq c' \, t^{- \frac{1 \vee (\gamma+\delta-1)}{2}} \, e^{\omega' t} \, \|\varphi\|_{L_2(\Gamma)}
\]
for all $t > 0$, $\varphi \in L_2(\Gamma)$ and $x \in \frac{1}{2} \, E^-$.
\end{tabel}
\end{lemma}
\begin{proof}
`\ref{ldtnc306-1}'.
By Lemma~\ref{lrobin402} we may assume that $\gamma > 0$.
By the second part of \cite{ERe2} Lemma~6.2 there exists a $c' > 0$ such that 
\begin{eqnarray*}
\|u_{t,\varphi} \circ \Phi^{-1}\|_{\cm,\gamma,x, E^-,\frac{1}{2}}
& \leq & c' \Big( \|\nabla (u_{t,\varphi} \circ \Phi^{-1})\|_{M,0,x, E^-,\frac{1}{2}}
+ \|u_{t,\varphi} \circ \Phi^{-1}\|_{L_2(E^-)} \Big)  \\
& \leq & 2 c' \, d! \, K^{d+1} \, \tilde c_0 \, t^{-1/2} \, e^{\tilde \omega_0 t} \, \|\varphi\|_{L_2(\Gamma)}
\end{eqnarray*}
for all $t > 0$ and $\varphi \in L_2(\Gamma)$,
where $\tilde c_0,\tilde \omega_0 > 0$ are as in Lemma~\ref{lrobin402}.
Then by \cite{ERe2} Lemma~3.1(a) there exist $c'',c''' > 0$ such that 
\begin{eqnarray*}
\|u_{t,\varphi} \circ \Phi^{-1}\|_{M,\gamma,x, E^-,\frac{1}{2}}
& \leq & c'' \, ( \|u_{t,\varphi} \circ \Phi^{-1}\|_{\cm,\gamma,x, E^-,\frac{1}{2}}
+ \|u_{t,\varphi} \circ \Phi^{-1}\|_{L_2(E^-)} )    \\
& \leq & c''' \, t^{-1/2} \, e^{\tilde \omega_0 t} \, \|\varphi\|_{L_2(\Gamma)}
\end{eqnarray*}
for all $t > 0$, $\varphi \in L_2(\Gamma)$ and $x \in \frac{1}{2} E^-$.

`\ref{ldtnc306-2}'.
By Statement \ref{ldtnc306-1} we may assume that $\gamma + \delta \geq 2$.
Let $c' > 0$ be as in Lemma~\ref{lrobin404}.
Choose $\varepsilon = t^{1/2} \, e^{-t} \in (0,1]$.
Then
\begin{eqnarray*}
\lefteqn{
\|u_{t,\varphi} \circ \Phi^{-1}\|_{\cm,\gamma + \delta,x, E^-,\frac{1}{2}}
} \hspace*{1mm}  \\*
& \leq & c' \Big( \varepsilon^{2-\delta} \, 
     \|\nabla (u_{t,\varphi} \circ \Phi^{-1})\|_{M,\gamma,x, E^-,\frac{1}{2}}
   + \varepsilon^{-(\gamma + \delta - 2)} \, \|\nabla (u_{t,\varphi} \circ \Phi^{-1})\|_{L_2(E^-)} \Big)  \\
& \leq & c' \Big( t^{\frac{2-\delta}{2}} \, e^{-(2-\delta) t} \, c_\gamma \, 
     t^{- \frac{\gamma + 1}{2}} \, e^{\omega_\gamma t} \, \|\varphi\|_{L_2(\Gamma)}
   + t^{-\frac{\gamma + \delta - 2}{2}} \, e^{(\gamma + \delta) t} \, d! \, K^{d+1} \, 
     \tilde c_0 \, t^{-\frac{1}{2}} e^{\tilde \omega_0 t} \, \|\varphi\|_{L_2(\Gamma)}  \Big)   \\
& \leq & c'' \, t^{- \frac{\gamma+\delta-1}{2}} \, e^{\omega' t} \, \|\varphi\|_{L_2(\Gamma)}
\end{eqnarray*}
for all $t > 0$, $\varphi \in L_2(\Gamma)$ and $x \in \frac{1}{2} E^-$,
with suitable $c'',\omega' > 0$.

Finally, by \cite{ERe2} Lemma~3.1(a) 
there exist $c''',c'''',\omega'' > 0$ such that 
\begin{eqnarray*}
\|u_{t,\varphi} \circ \Phi^{-1}\|_{M,\gamma + \delta,x, E^-,\frac{1}{2}}
& \leq & c''' \, ( \|u_{t,\varphi} \circ \Phi^{-1}\|_{\cm,\gamma + \delta,x, E^-,\frac{1}{2}}
+ \|u_{t,\varphi} \circ \Phi^{-1}\|_{L_2(E^-)} )  \nonumber  \\
& \leq & c'''' \, t^{- \frac{\gamma+\delta-1}{2}} \, e^{\omega'' t} \, \|\varphi\|_{L_2(\Gamma)}
\end{eqnarray*}
for all $t > 0$, $\varphi \in L_2(\Gamma)$ and $x \in \frac{1}{2} E^-$,
and the lemma follows.
\end{proof}

\begin{lemma} \label{lrobin406}
Let $\gamma \in [0,d-2 + 2\kappa)$ and suppose that $P(\gamma)$ is 
valid.
Let $\delta \in (0,2]$ and suppose that $\gamma + \delta < d-2+2\kappa$.
Then one has the following.
\begin{tabel}
\item \label{ldtnc304-1}
If $d \geq 3$, then $P(\gamma+\delta)$ is valid.
\item \label{ldtnc304-2}
If $d = 2$, then for all $\eta > 0$ there exist $c',\omega' > 0$ such that 
\[
\|\nabla (u_{t,\varphi} \circ \Phi^{-1})\|_{M,\gamma + \delta,x, E^-,\frac{1}{2}}
\leq c' \, t^{-\frac{\gamma + \delta + 1}{2}} 
\, t^{- \eta} 
\, e^{\omega' t} \, \|\varphi\|_{L_2(\Gamma)}
\]
for all $t > 0$, $\varphi \in L_2(\Gamma)$ and $x \in \frac{1}{2} \, E^-$.
\end{tabel}
\end{lemma}
\begin{proof}
Without loss of generality we may assume in case $d = 2$ that 
$2 \kappa \leq 2 - 2 \eta$ and hence $\gamma + \delta \leq 2 - 2 \eta$.
Define $\tilde \gamma \in [\gamma+\delta,d)$ by 
\[
\tilde \gamma
= \left\{ \begin{array}{ll}
\gamma + \delta & \mbox{if } \gamma + \delta \geq 2 ,  \\[5pt]
2 & \mbox{if } \gamma + \delta < 2 \mbox{ and } d \geq 3 ,  \\[5pt]
2 - 2 \eta & \mbox{if } d = 2 .
\end{array} \right.
\]
Note that $\tilde \gamma \geq 2$ if $d \geq 3$.
Let $c > 0$ be as in Proposition~\ref{probin301}
with the choice $\beta = \one_\Gamma$.
By analyticity of $T$ there exist $\tilde c,\tilde \omega > 0$ such that 
$\|\cn \,T_t  \varphi\|_{L_2(\Gamma)}
\leq \tilde c \, t^{-1} \, e^{\tilde \omega t} \, \|\varphi\|_{L_2(\Gamma)}$
for all $t > 0$ and $\varphi \in L_2(\Gamma)$.
By Lemma~\ref{lrobin405}\ref{ldtnc306-2} there exist $\hat c, \hat \omega > 0$ such that 
\begin{eqnarray*}
\|u_{t,\varphi} \circ \Phi^{-1}\|_{M,\gamma,x, E^-,\frac{1}{2}}
& \leq & \hat c \, t^{- \frac{1 \vee (\gamma-1)}{2}} \, e^{\hat \omega t} \, 
       \|\varphi\|_{L_2(\Gamma)}, \\
\|u_{t,\varphi} \circ \Phi^{-1}\|_{M,\gamma+\delta,x, E^-,\frac{1}{2}}
& \leq & \hat{c} \, t^{- \frac{1 \vee (\gamma+\delta-1)}{2}} \, e^{\hat \omega t} \, 
       \|\varphi\|_{L_2(\Gamma)}
\mbox{ and}  \\
\|u_{t,\varphi} \circ \Phi^{-1}\|_{M,\tilde \gamma,x, E^-,\frac{1}{2}}
& \leq & \hat c \, t^{- \frac{1 \vee (\tilde \gamma-1)}{2}} \, e^{\hat \omega t} \, 
      \|\varphi\|_{L_2(\Gamma)}
\end{eqnarray*}
for all $t > 0$, $\varphi \in L_2(\Gamma)$ and $x \in \frac{1}{2} \, E^-$.

Let $t > 0$, $\varphi \in L_2(\Gamma)$ and $x \in \frac{1}{2} E^-$.
Since $\cn \, T_{2t} \varphi = T_t \, \cn \, T_t \varphi$ it follows that 
\[
\gota_p(u_{2t,\varphi},v)
= (f, v)_{L_2(\Omega)} + \sum_{k=1}^d (f_k, \partial_k v)_{L_2(\Omega)}
   + \int_\Gamma (\tr u_{t, \cn \, T_t \varphi}) \, \overline{\tr v}
\]
for all $v \in W^{1,2}(\Omega)$, where 
$f = - a_0 \, u_{2t, \varphi} - \sum_{k=1}^d a_k \, \partial_k u_{2t, \varphi}$ and 
$f_k = - b_k \, u_{2t, \varphi}$.
Hence Proposition~\ref{probin301} with the choice $\varepsilon = t^{1/2} \, e^{-t} \in (0,1]$
gives
\begin{eqnarray*}
\lefteqn{
\|\nabla(u_{2t,\varphi} \circ \Phi^{-1})\|_{M,\gamma+\delta, x, E^-, \frac{1}{2}}
} \hspace*{10mm} \\*
& \leq & c \Big( \varepsilon^{2-\delta} \, 
     \|(a_0 \, u_{2t,\varphi}) \circ \Phi^{-1}\|_{M,\gamma, x, E^-, \frac{1}{2}}
   + \varepsilon^{2-\delta} \sum_{k=1}^d 
     \|(a_k \, \partial_k u_{2t,\varphi}) \circ \Phi^{-1}\|_{M,\gamma, x, E^-, \frac{1}{2}} \\*
& & \hspace*{10mm} {} 
+ \sum_{k=1}^d \|( b_k \, u_{2t, \varphi})\circ  \Phi^{-1}\|_{M, \gamma +\delta, x, E^-, \frac{1}{2}}
+ \varepsilon^{-(\gamma + \delta)} \, \|\nabla u_{2t,\varphi}\|_{L_2(\Omega)} \\*
& & \hspace*{10mm} {} 
+ \varepsilon^{2-\delta} \,
   \|\nabla(u_{t,\cn \, T_t \varphi} \circ \Phi^{-1})\|_{M,\gamma, x, E^-, \frac{1}{2}}
+ \varepsilon^{\tilde \gamma - \gamma - \delta} \, 
       \|u_{t,\cn \, T_t \varphi} \circ \Phi^{-1}\|_{M,\tilde \gamma, x, E^-, \frac{1}{2}} 
\Big)
.
\end{eqnarray*}
We estimate the terms.

First 
\begin{eqnarray*}
\varepsilon^{2-\delta} \, \|(a_0 \, u_{2t,\varphi}) \circ \Phi^{-1}\|_{M,\gamma, x, E^-, \frac{1}{2}}
& \leq & M \, t^{\frac{2-\delta}{2}} \, 
     \hat c \, (2t)^{- \frac{1 \vee (\gamma-1)}{2}} \, 
          e^{2 \hat \omega t} \, \|\varphi\|_{L_2(\Gamma)}  \\
& \leq & \hat c \, M \, t^{\frac{2-\delta}{2}} \, t^{- \frac{1 \vee (\gamma-1)}{2}}
     \, e^{2 \hat \omega t} \, \|\varphi\|_{L_2(\Gamma)}  \\
& = & \hat c \, M \, t^{-\frac{\gamma + \delta + 1}{2}} \, 
     t^{\frac{2+ (2 \wedge \gamma)}{2}}
     \, e^{2 \hat \omega t} \, \|\varphi\|_{L_2(\Gamma)}  \\
& \leq & \hat c \, M \, t^{-\frac{\gamma + \delta + 1}{2}} \, 
      e^{(2 \hat \omega + 1+\gamma) t} \, \|\varphi\|_{L_2(\Gamma)} 
.  
\end{eqnarray*}
Secondly, the induction hypothesis $P(\gamma)$ gives
\begin{eqnarray*}
\varepsilon^{2-\delta} \sum_{k=1}^d 
    \|(a_k \, \partial_k u_{2t,\varphi}) \circ \Phi^{-1}\|_{M,\gamma, x, E^-, \frac{1}{2}}
& \leq & t^{\frac{2-\delta}{2}} \, d \, M \, 
      \|(\nabla u_{2t, \varphi}) \circ \Phi^{-1}\|_{M, \gamma, x, E^-, \frac{1}{2}} \\
& \leq & d \, M \, t^{\frac{2-\delta}{2}} \, K \, 
      \|\nabla (u_{2t, \varphi} \circ \Phi^{-1})\|_{M, \gamma, x, E^-, \frac{1}{2}} \\
& \leq & d \, K \, M \, t^{\frac{2-\delta}{2}} \, 
     c_{\gamma} \, (2t)^{-\frac{\gamma +1}{2}} \, 
     e^{2 \omega_{\gamma}t} \, \|\varphi \|_{L_2(\Gamma)} \\
& = & c_{\gamma} \, d \, K \, M \, t^{-\frac{\gamma + \delta + 1}{2}} 
    \, t \, e^{2 \omega_\gamma t} \, \|\varphi \|_{L_2(\Gamma)} \\
& \leq & c_{\gamma} \, d \, K \, M \, t^{-\frac{\gamma + \delta + 1}{2}}
    \, e^{(2 \omega_\gamma +1) t} \,\|\varphi \|_{L_2(\Gamma)}
.
\end{eqnarray*}
Thirdly,
\begin{eqnarray*}
\sum_{k=1}^d \|(b_k \, u_{2t, \varphi})\circ  \Phi^{-1}\|_{M, \gamma +\delta, x, E^-, \frac{1}{2}}
& \leq & d \, M \, \|u_{2t, \varphi}\circ  \Phi^{-1}\|_{M, \gamma +\delta, x, E^-, \frac{1}{2}} \\
& \leq & d \, M \, \hat c \, (2t)^{-\frac{1\vee (\gamma + \delta -1)}{2}} 
    \, e^{2 \hat \omega t} \, \|\varphi \|_{L_2(\Gamma)} \\
& \leq & \hat c \, d \, M \, t^{-\frac{\gamma + \delta + 1}{2}} \, 
      t^{\frac{2\wedge (\gamma + \delta)}{2}} \, e^{2 \hat \omega t} \, \|\varphi \|_{L_2(\Gamma)} \\
& \leq & \hat c \, d \, M \, t^{-\frac{\gamma + \delta + 1}{2}} \, 
     e^{(2 \hat \omega +1 + \gamma + \delta) t} \, \|\varphi \|_{L_2(\Gamma)} \\
.
\end{eqnarray*}
Fourthly, 
\begin{eqnarray*}
\varepsilon^{-(\gamma + \delta)} \, \|\nabla u_{2t,\varphi}\|_{L_2(\Omega)}
& \leq & t^{-\frac{\gamma + \delta}{2}} \, e^{(\gamma + \delta) t} \,
     \tilde c_0 \, (2t)^{-1/2} \, e^{2 \tilde \omega_0 t} \, \|\varphi\|_{L_2(\Gamma)}  \\
& \leq & \tilde c_0 \, t^{-\frac{\gamma + \delta + 1}{2}} \, 
     e^{(2 \tilde \omega_0 + \gamma + \delta) t} \, \|\varphi\|_{L_2(\Gamma)}
,
\end{eqnarray*}
where $\tilde c_0,\tilde \omega_0 > 0$ are as in Lemma~\ref{lrobin402}.
Fifthly, 
\begin{eqnarray*}
\varepsilon^{2-\delta} \, 
     \|\nabla  (u_{t,\cn \, T_t \varphi} \circ \Phi^{-1})\|_{M,\gamma, x, E^-, \frac{1}{2}} 
& \leq & t^{\frac{2-\delta}{2}} \, M \, 
     c_\gamma \, t^{-\frac{\gamma + 1}{2}} \, e^{\omega_\gamma t} \, 
        \|\cn \, T_t \varphi\|_{L_2(\Gamma)}  \\
& \leq & t^{\frac{2-\delta}{2}} \, M \, 
     c_\gamma \, t^{-\frac{\gamma + 1}{2}} \, e^{\omega_\gamma t} \, 
     \tilde c \, t^{-1} \, e^{\tilde \omega t} \, \|\varphi\|_{L_2(\Gamma)}  \\
& = & \tilde c \, c_\gamma \, M \, t^{-\frac{\gamma + \delta + 1}{2}} \, 
     e^{(\omega_\gamma + \tilde \omega) t} \, \|\varphi\|_{L_2(\Gamma)}
.
\end{eqnarray*}
Finally, 
\begin{eqnarray*}
\varepsilon^{\tilde \gamma - \gamma - \delta} \, 
     \|u_{t,\cn \, \, T_t \varphi} \circ \Phi^{-1}\|_{M,\tilde \gamma, x, E^-, \frac{1}{2}}
& \leq & t^{ \frac{ \tilde \gamma - \gamma - \delta }{2} } \, M
     \, \hat c \, t^{- \frac{1 \vee (\tilde \gamma - 1)}{2}} \, e^{\hat \omega t} 
     \, \|\cn \, T_t \varphi\|_{L_2(\Gamma)}  \\
& \leq & t^{ \frac{ \tilde \gamma - \gamma - \delta }{2} } \, M
     \, \hat c \, t^{- \frac{1 \vee (\tilde \gamma - 1)}{2}} \, e^{\hat \omega t} \, 
    \tilde c \, t^{-1} \, e^{\tilde \omega t} \, \|\varphi\|_{L_2(\Gamma)}  \\
& = & \tilde c \, \hat c \, M \, t^{-\frac{\gamma + \delta + 1}{2}} \,
     t^{- \frac{ (2-\tilde \gamma) \vee 0 }{2} } \,
     e^{(\hat \omega + \tilde \omega) t} \, \|\varphi\|_{L_2(\Gamma)}
.
\end{eqnarray*}
Note that $t^{- \frac{ (2-\tilde \gamma) \vee 0 }{2} } = 1$ 
if $d \geq 3$, since then $\tilde \gamma \geq 2$.

The lemma follows.
\end{proof}

Now we are able to complete the proof of Proposition~\ref{probin403}.

\medskip

\noindent
{\bf End of proof of Proposition~\ref{probin403}.}

`\ref{pdtnc303-1}'.
(Suppose that $d \geq 3$.)
We know that $P(0)$ is valid.
Then it follows by induction from Lemma~\ref{lrobin406}\ref{ldtnc304-1} 
that $P(\gamma)$ is valid for all $\gamma \in [0,d-2+2\kappa)$.
In particular $P(d-2+\kappa)$ is valid.
Hence using Lemma~\ref{lrobin404} with $\delta = 2$ and $\varepsilon = t^{1/2} \, e^{-t}$
one deduces that there are $c,\omega > 0$ such that 
\[
\|u_{t,\varphi} \circ \Phi^{-1}\|_{\cm,d+\kappa,x, E^-,\frac{1}{2}}
\leq c \, t^{-\frac{d-1+\kappa}{2}} \, e^{\omega t} \, \|\varphi\|_{L_2(\Gamma)}
\]
for all $t > 0$, $\varphi \in L_2(\Gamma)$ and $x \in \frac{1}{2} \, E^-$.
Therefore the function $(u_{t,\varphi} \circ \Phi^{-1})|_{\frac{1}{2} \, E^-}$ has a 
continuous representative, which is H\"older continuous and it extends 
continuously to the closure of $\frac{1}{2} \, E^-$.
By \cite{ERe2} Lemma~3.1(c) there exists a $c' > 0$ such that 
\[
|(u_{t,\varphi} \circ \Phi^{-1})(x) - (u_{t,\varphi} \circ \Phi^{-1})(y)|
\leq c' \, t^{-\frac{d-1+\kappa}{2}} \, e^{\omega t} \, \|\varphi\|_{L_2(\Gamma)}
     \, |x-y|^{\kappa / 2}
\]
for all $t > 0$, $\varphi \in L_2(\Gamma)$ and $x,y \in \frac{1}{2} \, E^-$
with $|x-y| < \frac{1}{4}$.
The latter estimates extend to all $x,y \in \frac{1}{2} \, \overline{E^-}$
with $|x-y| \leq \frac{1}{4}$.
Since $\Phi$ is bi-Lipschitz with Lipschitz constants $K$, it follows that 
\[
|u_{t,\varphi}(x) - u_{t,\varphi}(y)|
\leq c' \, K^{\kappa / 2} \, t^{-\frac{d-1+\kappa}{2}} \, e^{\omega t} \, \|\varphi\|_{L_2(\Gamma)}
     \, |x-y|^{\kappa / 2}
\]
for all $t > 0$, $u \in L_2(\Gamma)$ and $x,y \in \Gamma \cap \Phi^{-1}(\frac{1}{2} E)$ 
with $|x-y| \leq \frac{1}{4K}$ and Statement~\ref{pdtnc303-1} follows.

The prove of Statement~\ref{pdtnc303-2} is similar by using Lemma~\ref{lrobin406}\ref{ldtnc304-2}.
\end{proof}

The uniform H\"older bounds of Theorem~\ref{trobin401} can be combined with the 
Poisson kernel bounds of \cite{EO6} to obtain H\"older Poisson kernel 
bounds in case the domain $\Omega$ is of class $C^{1 + \kappa}$ for some 
$\kappa > 0$.

\begin{thm} \label{trobin408}
Assume $d \geq 3$.
Suppose there exists a $\kappa > 0$ such that $\Omega$ is of class 
$C^{1 + \kappa}$.
Assume that $c_{kl} = c_{lk}$ is H\"older continuous for all $k,l \in \{ 1,\ldots,d \} $,
the function $a_0$ is real valued and 
$a_k = b_k = 0$ for all $k \in \{ 1,\ldots,d \} $.
Suppose that $0 \not\in \sigma(A_D)$.
Let $K$ be the kernel of the semigroup on $L_2(\Gamma)$ generated by $-\cn$,
where $\cn$ is the Dirichlet-to-Neumann operator.
Then for all $\varepsilon,\tau' \in (0,1)$ and $\tau > 0$ there exist $c,\nu > 0$ 
such that 
\[
|K_t(x,y) - K_t(x',y')|
\leq c \, (t \wedge 1)^{-(d-1)} \, 
   \Big( \frac{|x-x'|+|y-y'|}{t + |x-y|} \Big)^\nu \,
    \frac{1}{\displaystyle \Big( 1 + \frac{|x-y|}{t} \Big)^{d-\varepsilon}} \, 
    (1 + t)^\nu \, e^{- \lambda_1 t}
\]
for all $x,y,x',y' \in \Gamma$ and $t > 0$ with 
$|x-x'| + |y-y'| \leq \tau \, t + \tau' \, |x-y|$,
where $\lambda_1 = \min \sigma(\cn)$.
\end{thm}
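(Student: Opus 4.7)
The plan is to combine the Poisson kernel bounds for $T$ already established in \cite{EO6} with the new $L_2\to C^\kappa$ H\"older bounds of Theorem~\ref{trobin401}. Under the hypotheses of Theorem~\ref{trobin408} all coefficients are real and $c_{kl}=c_{lk}$, so $\gota$ is symmetric, $\cn$ is self-adjoint, and $K_t$ is symmetric in $x,y$. The input from \cite{EO6} is that for every $\varepsilon>0$ there are $c_0,\nu_0>0$ with
\[
|K_t(x,y)|\le c_0\,(t\wedge 1)^{-(d-1)}\,(1+|x-y|/t)^{-(d-\varepsilon)}\,(1+t)^{\nu_0}\,e^{-\lambda_1 t}
\]
for all $x,y\in\Gamma$ and $t>0$; the factor $e^{-\lambda_1 t}$ for $t\ge 1$ is reintroduced, if not already present in \cite{EO6}, via the decomposition $T_t=T_1 T_{t-1}$ and the $L_2\to L_2$ spectral decay $\|T_{t-1}\|\le e^{-\lambda_1(t-1)}$.

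From Theorem~\ref{trobin401} I would first derive a global pointwise H\"older estimate
\[
|K_t(x,y)-K_t(x',y)|\le c\,(t\wedge 1)^{-(d-1)-\kappa}\,(1+t)^{\nu_1}\,e^{-\lambda_1 t}\,|x-x'|^\kappa
\]
(and the analogue in~$y$). To this end I write $K_t(\cdot,y)=T_{t/2}\bigl(K_{t/2}(\cdot,y)\bigr)$ and combine $\|T_{t/2}\|_{L_2\to C^\kappa}\le c\,t^{-(d-1)/2-\kappa}e^{\omega t}$ from Theorem~\ref{trobin401} with the estimate $\|K_{t/2}(\cdot,y)\|_{L_2(\Gamma)}\le c\,(t\wedge 1)^{-(d-1)/2}(1+t)^{\nu_0}e^{-\lambda_1 t}$, which follows by integrating the squared Poisson bound (choosing $\varepsilon<(d+1)/2$ so that $\int_\Gamma(1+|z-y|/t)^{-2(d-\varepsilon)}d\sigma(z)\lesssim t^{d-1}$ for small $t$, and using the compactness of $\Gamma$ for large $t$). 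The large-time decay of the semigroup enters exactly as in the preceding paragraph.

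The H\"older Poisson estimate then follows by interpolating these two bounds. Writing $h=|x-x'|+|y-y'|$ and $R=t+|x-y|$, and using $h\le\tau t+\tau'|x-y|$ with $\tau'<1$ to see that the Poisson factors at $(x,y)$ and $(x',y')$ are comparable, I distinguish two cases. If $h\ge\alpha R$ for a small fixed $\alpha>0$, the triangle inequality applied to the Poisson bound yields the claim directly, since then $(h/R)^\nu\ge\alpha^\nu$. If $h<\alpha R$, I would apply the telescoping $K_t(x,y)-K_t(x',y')=[K_t(x,y)-K_t(x',y)]+[K_t(x',y)-K_t(x',y')]$ and, on each difference, take a $\theta$-weighted geometric mean of the global H\"older bound above and twice the Poisson bound. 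Matching the exponents of $t$, $h$ and $R$ against the target $c(t\wedge 1)^{-(d-1)}(h/R)^\nu(1+|x-y|/t)^{-(d-\varepsilon)}$ forces $\nu=\theta\kappa$, and requires the Poisson exponent used as input from \cite{EO6} to be strictly smaller than the output $\varepsilon$ of Theorem~\ref{trobin408}; the freedom to choose $\varepsilon>0$ in the statement is precisely what allows this. The main obstacle is this exponent balancing in the off-diagonal regime $|x-y|\gg t$: the global H\"older bound lives at the fixed time scale $t$, while the target H\"older factor uses the natural scale $R=t+|x-y|$, and converting between them costs a small amount of Poisson decay, which is absorbed by weakening $\varepsilon$.
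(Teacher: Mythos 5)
Your proposal follows essentially the same route as the paper: combine the Poisson bound from \cite{EO6} with the $L_2\to C^\kappa$ bound of Theorem~\ref{trobin401} to obtain a kernel H\"older bound, use the constraint $|x-x'|+|y-y'|\leq\tau t+\tau'|x-y|$ (with $\tau'<1$) to compare the Poisson factors at $(x,y)$ and $(x',y')$, interpolate geometrically between the telescoped H\"older estimate and the Poisson estimate, and finally convert from the time scale $t$ to the scale $t+|x-y|$ at the cost of a small loss in the Poisson exponent (which is absorbed by the $\varepsilon$ in the statement). Where you differ is mostly in packaging. You make explicit the derivation $K_t(\cdot,y)=T_{t/2}\bigl(K_{t/2}(\cdot,y)\bigr)$ together with $\|K_{t/2}(\cdot,y)\|_{L_2(\Gamma)}\lesssim(t\wedge1)^{-(d-1)/2}$ obtained by squaring and integrating the Poisson bound; the paper states the kernel H\"older bound directly as a consequence of Theorem~\ref{trobin401} and \cite{EO6}, leaving this step implicit. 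You also build the $e^{-\lambda_1 t}$ decay into the bounds from the start via $T_t=T_1T_{t-1}$, while the paper proves the estimate with a crude $e^{\omega t}$ factor for $t\in(0,3]$ and then handles $t\geq3$ as a separate regime using the $L_1(\Gamma)\to C^\nu(\Gamma)$ bound and the spectral gap; the two treatments are equivalent. The case split $h\geq\alpha R$ versus $h<\alpha R$ in your argument is unnecessary: once one has the two bounds at hand, a single geometric mean followed by the identity $1/t=(1/(t+|x-y|))(1+|x-y|/t)$ does the conversion in all regimes, as the paper's calculation shows, and also makes the exponent bookkeeping you describe informally completely explicit ($\nu=\nu'\varepsilon$, output Poisson exponent $d(1-\varepsilon)-\nu'\varepsilon$). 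Your remark that the $\varepsilon$ in the conclusion is what absorbs the conversion loss is correct and is exactly how the paper balances the exponents.
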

\begin{proof}
It follows from Theorem~\ref{trobin401} and 
\cite{EO6} Theorem~1.1 that there exist $c,\omega > 0$ and 
sufficiently small $\nu' \in (0,1)$ 
such that 
\[
|K_t(x,y)|
\leq c \, t^{-(d-1)} \,
    \frac{1}{\displaystyle \Big( 1 + \frac{|x-y|}{t} \Big)^d} \, 
    e^{\omega t}
\]
and 
\begin{equation}
|K_t(x,y) - K_t(x',y)|
\leq c \, t^{-(d-1)} \,
   \Big( \frac{|x-x'|}{t} \Big)^{\nu'} \, e^{\omega t}
\label{etrobin408;1}
\end{equation}
for all $x,y,x' \in \Gamma$ and $t > 0$ with $|x-x'| \leq 1$.
By duality, we obtain similarly, without loss of generality, 
that 
\begin{equation}
|K_t(x,y) - K_t(x,y')|
\leq c \, t^{-(d-1)} \,
   \Big( \frac{|y-y'|}{t} \Big)^{\nu'} \, e^{\omega t}
\label{etrobin408;2}
\end{equation}
for all $x,y,y' \in \Gamma$ and $t > 0$ with $|y-y'| \leq 1$.

Now let $x,y,x',y' \in \Gamma$ and suppose that
$|x-x'| + |y-y'| \leq \tau \, t + \tau' \, |x-y|$.
Then $|x-y| \leq |x' - y'| + \tau \, t + \tau' \, |x-y|$, 
so $|x-y| \leq \frac{1}{1-\tau'} \, |x' - y'| + \frac{\tau}{1-\tau'} \, t$.
Hence
\[
1 + \frac{|x-y|}{t}
\leq 1 + \frac{1}{1-\tau'} \, \frac{|x'-y'|}{t} + \frac{\tau}{1-\tau'}
\leq \frac{1 + \tau}{1-\tau'} \Big( 1 + \frac{|x'-y'|}{t} \Big)
\]
and 
\[
|K_t(x',y')|
\leq c \, t^{-(d-1)} \,
    \frac{1}{\displaystyle \Big( 1 + \frac{|x'-y'|}{t} \Big)^d} \, 
    e^{\omega t}
\leq c \, \frac{(1 + \tau)^d}{(1-\tau')^d} \, t^{-(d-1)} \,
    \frac{1}{\displaystyle \Big( 1 + \frac{|x-y|}{t} \Big)^d} \, 
    e^{\omega t}
.  \]
Therefore
\[
|K_t(x,y) - K_t(x',y')|
\leq 2 c \, \frac{(1 + \tau)^d}{(1-\tau')^d} \, t^{-(d-1)} \,
    \frac{1}{\displaystyle \Big( 1 + \frac{|x-y|}{t} \Big)^d} \, 
    e^{\omega t}
.  \]
Next, it follows from (\ref{etrobin408;1}) and (\ref{etrobin408;2})
that 
\[
|K_t(x,y) - K_t(x',y')|
\leq 2 c \, t^{-(d-1)} \,
   \Big( \frac{|x-x'| + |y-y'|}{t} \Big)^{\nu'} \, e^{\omega t}
.  \]
Then 
\[
|K_t(x,y) - K_t(x',y')|
\leq c' \, t^{-(d-1)} \, 
   \Big( \frac{|x-x'| + |y-y'|}{t} \Big)^{\nu' \varepsilon} 
    \frac{1}{\displaystyle \Big( 1 + \frac{|x-y|}{t} \Big)^{d(1-\varepsilon)}} \, 
   \, e^{\omega t}
\]
by interpolation, where 
$c' = 2 c \, \frac{(1 + \tau)^{d(1-\varepsilon)}}{(1 - \tau')^{d(1-\varepsilon)}}$.
Note that 
\[
\frac{1}{t} 
= \frac{1}{t + |x-y|} \, \Big( 1 + \frac{|x-y|}{t} \Big)
.  \]
Therefore 
\[
|K_t(x,y) - K_t(x',y')|
\leq c' \, t^{-(d-1)} \, 
   \Big( \frac{|x-x'| + |y-y'|}{t + |x-y|} \Big)^{\nu' \varepsilon} 
    \frac{1}{\displaystyle \Big( 1 + \frac{|x-y|}{t} \Big)^{d(1-\varepsilon) - \nu' \varepsilon}} \, 
   \, e^{\omega t}
\]
and the required bounds follow if $t \in (0,3]$.

Finally, there exist $c > 0$ and $\nu \in (0,1)$ such that 
\[
\|T_t\|_{L_1(\Gamma) \to C^\nu(\Gamma)}
\leq \|T_1\|_{L_2(\Gamma) \to C^\nu(\Gamma)} 
      \, \|T_{t-2}\|_{L_2(\Gamma) \to L_2(\Gamma)}
      \, \|T_1\|_{L_1(\Gamma) \to L_2(\Gamma)}
\leq c \, e^{-\lambda_1 t}
\]
for all $t \geq 3$.
Hence 
\[
|K_t(x,y) - K_t(x',y)|
\leq c \, e^{-\lambda_1 t} \, |x-x'|^\nu
\]
for all $x,x',y \in \Gamma$ and $t \geq 3$ with $|x-x'| \leq 1$.
By duality there exists a $c' > 0$ such that 
\[
|K_t(x,y) - K_t(x',y')|
\leq c' \, e^{-\lambda_1 t} \, (|x-x'| + |y-y'|)^\nu
\]
for all $x,x',y,y' \in \Gamma$ and $t \geq 3$ with $|x-x'| \leq 1$
and $|y-y'| \leq 1$.
Since $\Gamma$ is bounded, the required H\"older Poisson bounds
follow for $t \geq 3$.
\end{proof}

\appendix

\section{The chain condition} \label{SrobinA}

Let $\Omega \subset \Ri^d$ be open and connected.
We say that $\Omega$ satisfies the {\bf chain condition} if there exists a
$c > 0$ such that for all $x,y \in \Omega$ and $n \in \Ni$ there are 
$x_0,\ldots,x_n \in \Omega$ such that $x_0 = x$, $x_n = y$ and 
$|x_{k+1} - x_k| \leq \frac{c}{n} \, |x-y|$ for all $k \in \{ 0,\ldots,n-1 \} $.
Obviously in general $\Omega$ does not satisfy the chain condition.

\begin{prop} \label{probinapp1}
Let $\Omega \subset \Ri^d$ be open bounded connected with Lipschitz boundary.
Then $\Omega$ satisfies the chain condition.
\end{prop}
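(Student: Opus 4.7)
The plan is to reduce the chain condition to \emph{quasi-convexity} of $\Omega$: the existence of a constant $C > 0$ such that any two points $x,y \in \Omega$ can be joined by a rectifiable curve $\gamma \subset \Omega$ of length at most $C |x-y|$. Once that is established, I parametrise $\gamma$ by arclength, $\gamma \colon [0,L] \to \Omega$ with $L \leq C|x-y|$, and set $x_k = \gamma(kL/n)$ for $k = 0,\ldots,n$. Then $x_0 = x$, $x_n = y$, and $|x_{k+1}-x_k| \leq L/n \leq (C/n)\,|x-y|$, which is exactly the chain condition with constant $c = C$.

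To prove quasi-convexity I split into a local estimate near $\partial\Omega$ and a global estimate. For the \emph{local} part I use the Lipschitz boundary hypothesis: each $z \in \partial\Omega$ admits a neighbourhood $B(z,r_z)$ and orthogonal coordinates in which $\Omega \cap B(z,r_z)$ is the subgraph of a Lipschitz function $\varphi_z$ with constant $L_z$. For two points $x = (\tilde x, x_d), y = (\tilde y, y_d) \in \Omega \cap B(z,r_z/4)$ I construct a three-segment polygonal path in $\Omega$: go straight down from $x$ to $(\tilde x, x_d - s)$, horizontally to $(\tilde y, x_d - s)$, then up to $y$, where $s$ is chosen of order $L_z |\tilde x - \tilde y| + |x_d - y_d|$ so that the horizontal segment lies strictly below the graph of $\varphi_z$. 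Its total length is bounded by a constant (depending only on $L_z$) times $|x-y|$. Covering $\partial\Omega$ by finitely many such balls and covering the set $\{x : d(x,\partial\Omega) \geq \eta\}$ by finitely many Euclidean balls contained in $\Omega$, Lebesgue's number lemma provides $\delta_0 > 0$ and $C_{\mathrm{loc}} > 0$ such that whenever $|x-y| < \delta_0$, both points lie in a common chart (or interior ball) and $d_\Omega(x,y) \leq C_{\mathrm{loc}} |x-y|$.

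For the \emph{global} part, $\Omega$ is path-connected since it is open and connected in $\Ri^d$, and using the same finite cover one can construct between any fixed pair of centres a piecewise-linear curve in $\Omega$ of uniformly bounded length. Combined with the local estimate (which lets me patch any point in $\Omega$ to the centre of its chart by a short curve), this gives a finite intrinsic diameter $D = \sup_{x,y \in \Omega} d_\Omega(x,y) < \infty$. Hence for $|x-y| \geq \delta_0$ we have $d_\Omega(x,y) \leq D \leq (D/\delta_0)\,|x-y|$. Setting $C = \max(C_{\mathrm{loc}}, D/\delta_0)$ completes the proof of quasi-convexity.

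The main obstacle is the local construction. The three-segment path must stay inside $\Omega$, i.e.\ below the Lipschitz graph on the relevant horizontal strip; this forces the vertical drop $s$ to be at least of order $L_z |\tilde x - \tilde y|$, which threatens to blow up the ratio length$/|x-y|$. The point is that $|\tilde x - \tilde y| \leq |x-y|$ and the vertical excursion is still controlled by $(1 + L_z)|x-y|$, so the ratio stays bounded by a constant depending only on the Lipschitz constant. Restricting to the smaller ball $B(z,r_z/4)$ and choosing $\delta_0$ small is what keeps the entire constructed path inside the chart $B(z,r_z)$, so that the defining graph description of $\partial\Omega$ is valid throughout.
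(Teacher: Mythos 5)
Your proposal is correct and follows the same overall strategy as the paper: reduce the chain condition to quasi-convexity (equivalently, bound the geometric distance $d(x,y)$ by a multiple of $|x-y|$, which is the content of Lemma~\ref{lrobinapp3}), then produce the chain by taking $n+1$ equally spaced points along a curve of length comparable to $|x-y|$. The differences are in implementation. For the local estimate the paper exploits the bi-Lipschitz chart $\Phi$ already set up for the regularity arguments: $\Phi$ maps $\Omega\cap U$ onto the convex half-cylinder $E^-$, so one simply joins $\Phi(x)$ and $\Phi(y)$ by a straight segment and pulls it back, losing only a factor $L^2$ from the Lipschitz constants. You instead work with the graph description of the boundary and build an explicit down--across--up polygonal path, the key point being that the vertical drop of order $L_z|\tilde x-\tilde y|$ still yields total length $\lesssim(1+L_z)|x-y|$; both versions use the same input (Lipschitz boundary) but yours is somewhat more hands-on and does not need convexity of a reference domain. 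For the global step the paper proves $d$ is bounded on $\Omega$ and then obtains the comparison constant by a contradiction/subsequence argument, whereas you invoke the Lebesgue number lemma to get a uniform $\delta_0$ below which two points share a chart and dispose of $|x-y|\geq\delta_0$ trivially via the bounded intrinsic diameter; your route is a bit more direct and avoids extracting subsequences. One small caveat, which you anticipate: shrinking only to $B(z,r_z/4)$ does not by itself keep the down--across--up path inside $B(z,r_z)$ when $L_z$ is large (the excursion is of order $(1+L_z)|x-y|$), so one must shrink the local scale further, to something like $r_z/(C(1+L_z))$, before applying the Lebesgue number lemma; with that adjustment the argument is complete.
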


The proof requires some preparation.
Let $\Omega \subset \Ri^d$ be open bounded connected with Lipschitz boundary.
If $T > 0$ and $\gamma \colon [0,T] \to \Omega$ is a Lipschitz curve, 
then $\gamma$ is differentiable almost everywhere.
We define the {\bf length} of $\gamma$ by 
$\ell(\gamma) = \int_0^T |\gamma'(t)| \, dt$.
Define the {\bf geometric distance} $d \colon \Omega \times \Omega \to [0,\infty)$
by $d(x,y)$ is the infimum of $\ell(\gamma)$, where $T > 0$ and 
$\gamma \colon [0,T] \to \Omega$ is a Lipschitz curve with $\gamma(0) = x$ 
and $\gamma(T) = y$.
Obviously $|x-y| \leq \ell(\gamma)$ and hence $|x-y| \leq d(x,y)$.

We first consider a special Lipschitz chart.

\begin{lemma} \label{lrobinapp2}
Let $U \subset \Ri^d$ be an open set and $\Phi$ be a bi-Lipschitz map
from an open neighbourhood of $\overline U$ onto an open 
subset of $\Ri^d$ such that $\Phi(U) = E$ and $\Phi(\Omega \cap U) = E^-$.
Then there are $c_1,c_2 > 0$ such that 
$d(x,y) \leq c_1 \, |x-y|$ and $|x-y| \leq c_2$
for all $x,y \in \Omega \cap U$.
\end{lemma}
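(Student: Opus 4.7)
The plan is to exploit the convexity of the reference set $E^-$: given any two points $\tilde x,\tilde y\in E^-$, the straight segment $\sigma(t)=(1-t)\tilde x+t\tilde y$, $t\in[0,1]$, lies entirely in $E^-$ because $E^-$ is an intersection of convex sets (an open ball in the first $d-1$ coordinates and a half-strip in the last coordinate). Its Euclidean length equals $|\tilde x-\tilde y|$.

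First I would let $K\geq 1$ be a common Lipschitz constant for $\Phi$ and $\Phi^{-1}$ on their respective domains. Given $x,y\in\Omega\cap U$, set $\tilde x=\Phi(x)$ and $\tilde y=\Phi(y)$, both in $E^-$. Consider the pullback curve $\gamma=\Phi^{-1}\circ\sigma\colon[0,1]\to\Omega\cap U$, which is well defined by the assumption $\Phi(\Omega\cap U)=E^-$, and is Lipschitz because $\Phi^{-1}$ is Lipschitz on $\overline{E^-}$. A standard argument (compose Lipschitz with $C^1$, use that a Lipschitz function is absolutely continuous, and bound $|\gamma'(t)|$ almost everywhere by $K\,|\sigma'(t)|$) then gives
\[
\ell(\gamma)\;\leq\;K\,\ell(\sigma)\;=\;K\,|\tilde x-\tilde y|\;=\;K\,|\Phi(x)-\Phi(y)|\;\leq\;K^{2}\,|x-y|.
\]
By the definition of the geometric distance this yields $d(x,y)\leq K^{2}|x-y|$, so we may take $c_{1}=K^{2}$.

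For the second estimate, note that $\Omega\cap U\subset\Phi^{-1}(E^-)$ and $\overline{E^-}$ is compact, so its diameter $D=\mathrm{diam}(\overline{E^-})$ is finite (in fact bounded by $2\sqrt{2}$). The Lipschitz property of $\Phi^{-1}$ on $\overline{E^-}$ gives
\[
|x-y|\;=\;|\Phi^{-1}(\tilde x)-\Phi^{-1}(\tilde y)|\;\leq\;K\,|\tilde x-\tilde y|\;\leq\;K\,D,
\]
so $c_{2}=KD$ works.

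I do not expect any genuine obstacle here; the only mildly technical point is justifying that $\ell(\Phi^{-1}\circ\sigma)\leq K\,\ell(\sigma)$ for a merely Lipschitz $\Phi^{-1}$, which follows by Rademacher's theorem and the chain rule for Lipschitz compositions (or, more directly, from the fact that a Lipschitz map contracts $1$-dimensional Hausdorff measure by its Lipschitz constant). Everything else is convex-geometric and immediate.
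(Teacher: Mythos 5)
Your proof is correct and follows essentially the same route as the paper: both pull back the straight segment joining $\Phi(x)$ and $\Phi(y)$ in $E^-$ through $\Phi^{-1}$, bound the length of the resulting Lipschitz curve by the square of the Lipschitz constant times $|x-y|$, and then bound $|x-y|$ via the Lipschitz constant of $\Phi^{-1}$ together with the boundedness of $E^-$. The only cosmetic difference is that you state explicitly that the argument relies on the convexity of $E^-$ (so the segment stays inside), which the paper leaves implicit.
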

\begin{proof}
Let $L \in \Ri$ be larger than both the Lipschitz constant for 
$\Phi$ and $\Phi^{-1}$.
Further, let $x,y \in \Omega \cap U$.
Define $\gamma \colon [0,1] \to \Omega$ by 
$\gamma(t) = \Phi^{-1} ( (1-t) \, \Phi(x) + t \, \Phi(y) )$.
Then $\gamma(0) = x$ and $\gamma(1) = y$.
Moreover, $\gamma$ is Lipschitz continuous and 
$|\gamma'(t)| \leq L \, |\Phi(y) - \Phi(x)| \leq L^2 \, |y-x|$
for almost every $t \in [0,1]$.
So $d(x,y) \leq \ell(\gamma) \leq L^2 \, |x-y|$.
Also $|x-y| \leq L \, |\Phi(x) - \Phi(y)| \leq 2 L$.
\end{proof}

We next show that the geometric distance is equivalent with the induced 
Euclidean distance on $\Omega$.

\begin{lemma} \label{lrobinapp3}
There exists a $c > 0$ such that 
$|x-y| \leq d(x,y) \leq c \, |x-y|$ for all $x,y \in \Omega$.
\end{lemma}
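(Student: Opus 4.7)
The lower bound $|x-y| \leq d(x,y)$ is immediate from the definition of $d$ (already noted in the text). For the upper bound the plan is to obtain a local comparison $d(x,y) \leq c_1 |x-y|$ on small scales from Lemma~\ref{lrobinapp2} via a finite cover of $\overline\Omega$, and then to upgrade it to a global comparison using the connectedness of $\Omega$ and the boundedness of $\diam \Omega$.

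First I would cover the compact set $\overline\Omega$ by finitely many open sets $U_1,\ldots,U_N$ where each $U_i$ is either a bi-Lipschitz chart as in Lemma~\ref{lrobinapp2} (centred at a boundary point) or an open Euclidean ball entirely contained in $\Omega$ (centred at an interior point). On sets of the first type, Lemma~\ref{lrobinapp2} supplies constants $c_1^{(i)}, c_2^{(i)} > 0$ with $d(x,y) \leq c_1^{(i)} |x-y|$ and $|x-y| \leq c_2^{(i)}$ for all $x,y \in \Omega \cap U_i$; on balls contained in $\Omega$ the straight-line segment between two points gives the same estimates trivially (with $c_1^{(i)} = 1$). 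By the Lebesgue number lemma applied to this cover of $\overline \Omega$ there exists $\delta > 0$ such that any pair $x, y \in \overline\Omega$ with $|x-y| \leq \delta$ lies in some $U_i$. Hence, setting $c_1 = \max_i c_1^{(i)}$, we obtain $d(x,y) \leq c_1 |x-y|$ for all $x,y \in \Omega$ with $|x-y| \leq \delta$.

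To extend to $|x-y| > \delta$, I would show that $d$ is uniformly bounded on $\Omega \times \Omega$. Consider the graph $G$ on the index set $\{1,\ldots,N\}$ with an edge between $i$ and $j$ whenever $U_i \cap U_j \cap \Omega \neq \emptyset$. If $G$ had a connected component $S \subsetneq \{1,\ldots,N\}$, then $V = \bigcup_{i \in S}(U_i \cap \Omega)$ and $W = \bigcup_{j \notin S}(U_j \cap \Omega)$ would be disjoint nonempty open subsets whose union is $\Omega$, contradicting connectedness; hence $G$ is connected. Choosing a spanning tree of $G$ and, for each edge, a point $p_{ij} \in U_i \cap U_j \cap \Omega$, the triangle inequality for $d$ together with the local bound $d(x,y) \leq c_1^{(i)} c_2^{(i)}$ on $\Omega \cap U_i$ produces a constant $M > 0$ with $d(x,y) \leq M$ for all $x, y \in \Omega$. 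For $|x-y| > \delta$ this yields $d(x,y) \leq M \leq (M/\delta)\,|x-y|$, so the lemma follows with $c = \max(c_1, M/\delta)$.

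The main obstacle I expect is the connectedness-via-graph step: one must verify carefully that $G$ is connected (which uses connectedness of $\Omega$ in an essential way) and then assemble the global bound $M$ through a chain of at most $N-1$ overlap points $p_{ij}$. The remainder is routine: a finite compactness argument combined with a direct application of Lemma~\ref{lrobinapp2}, with the interior case handled by straight-line segments.
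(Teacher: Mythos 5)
Your proof is correct, but it takes a genuinely different route from the paper's in its two final steps. For the global boundedness of $d$, the paper simply fixes reference points $w_k \in \Omega \cap U_k$ in each chart and writes $d(x,y) \leq d(x,w_k) + d(w_k,w_l) + d(w_l,y)$, relying on the (implicit) fact that a connected open subset of $\Ri^d$ is path-connected so that the finitely many quantities $d(w_k,w_l)$ are all finite; you instead make the chaining explicit via the nerve graph of the cover, proving its connectedness from that of $\Omega$ and threading through overlap points $p_{ij}$ — more work, but it spells out exactly where connectedness enters. For the passage from a local to a global Lipschitz bound, the paper argues by contradiction: a sequence $(x_n,y_n)$ with $d(x_n,y_n) > n|x_n-y_n|$ must satisfy $|x_n-y_n|\to 0$ (by boundedness of $d$), so after passing to a convergent subsequence both points eventually lie in a common chart, contradicting the local bound. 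You avoid the contradiction entirely by invoking the Lebesgue number lemma for the finite open cover of $\overline\Omega$, which directly yields a scale $\delta$ below which the local bound applies, and above which the uniform bound $d\leq M$ suffices. Both strategies are sound; the paper's is shorter, yours is constructive and makes the uniformity more transparent.
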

\begin{proof}
By a compactness argument there are $N \in \Ni$ and for all $k \in \{ 1,\ldots,N \} $ 
there are open $U_k \subset \Ri^d$ and a bi-Lipschitz map $\Phi_k$
from an open neighbourhood of $\overline{U_k}$ onto an open 
subset of $\Ri^d$ such that $\Phi_k(U_k) = E$ and $\Phi_k(\Omega \cap U_k) = E^-$;
and moreover, 
$\Gamma \subset \bigcup_{k=1}^N U_k$.
For all $k \in \{ 1,\dots,N \} $ fix $w_k \in \Omega \cap U_k$.
Again by compactness there are $N' \in \{ N+1,N+2,\ldots \} $ and 
for all $k \in \{ N+1,\ldots,N' \} $ there are $w_k \in \Omega$ and 
$r_k > 0$ such that $B(w_k,r_k) \subset \Omega$ and 
\[
\overline \Omega \subset \bigcup_{k=1}^N  U_k \cup \bigcup_{k=N+1}^{N'} B(w_k,r_k)
.  \]
By Lemma~\ref{lrobinapp2} there are $c_1,c_2 \geq 1$ such that 
$d(x,y) \leq c_1 \, |x-y|$ and $|x-y| \leq c_2$ for all $k \in \{ 1,\ldots,N \} $
and $x,y \in \Omega \cap U_k$.
Without loss of generality we may assume that $2 r_k \leq c_2$ for all 
$k \in \{ N+1,\ldots,N' \} $.
For simplicity write $U_k = B(w_k,r_k)$ for all $k \in \{ N+1,\ldots,N' \} $.
Then $d(x,y) \leq c_1 \, |x-y|$ and $|x-y| \leq c_2$ for all 
$k \in \{ N+1,\ldots,N' \} $ and $x,y \in U_k$.

We next prove that the geometric distance $d$ is bounded on $\Omega$.
Define $M = 2 c_2 + \max \{ d(w_k,w_l) : k,l \in \{ 1,\ldots,N' \} \} $.
Let $x,y \in \Omega$.
Then there are $k,l \in \{ 1,\ldots,N' \} $ such that 
$x \in U_k$ and $y \in U_l$.
Hence $d(x,y) \leq d(x,w_k) + d(w_k,w_l) + d(w_l,y) \leq M$.
Therefore $d$ is bounded by~$M$.

Finally suppose that there is no $c > 0$ such that 
$d(x,y) \leq c \, |x-y|$ for all $x,y \in \Omega$.
Then for all $n \in \Ni$ there are $x_n,y_n \in \Omega$ such that 
$d(x_n,y_n) > n \, |x_n - y_n|$.
It follows that $|x_n - y_n| \leq \frac{M}{n}$ for all $n \in \Ni$.
The sequence $(x_n)_{n \in \Ni}$ is bounded since $\Omega$ is bounded. 
Passing to a subsequence if necessary, we may assume that the 
sequence $(x_n)_{n \in \Ni}$ is convergent. 
Let $x = \lim_{n \to \infty} x_n$.
Then $\lim_{n \to \infty} y_n = x$ and $x \in \overline \Omega$.
Since $\overline \Omega \subset \bigcup_{k=1}^N  U_k \cup \bigcup_{k=N+1}^{N'} B(w_k,r_k)$,
there exists a $k \in \{ 1,\ldots,N' \} $ such that $x \in U_k$.
Because $U_k$ is open there exists an $N_0 \in \Ni$ such that 
$x_n \in U_k$ and $y_n \in U_k$ for all $n \in \Ni$ with $n \geq N_0$.
Finally choose $n \in \Ni$ such that $n \geq \max \{ N_0,c_1 \} $.
Then 
\[
n \, |x_n - y_n|
< d(x_n,y_n)
\leq c_1 \, |x_n - y_n|
\leq n \, |x_n - y_n|
.  \]
This is a contradiction.
\end{proof}

Now we are able to prove the proposition.

\begin{proof}[{\bf Proof of Proposition~\ref{probinapp1}.}]
Let $c > 0$ be as in Lemma~\ref{lrobinapp3}.
Let $x,y \in \Omega$ and $n \in \Ni$.
Since the case $x=y$ is trivial, we may assume that $x \neq y$.
There exist $T > 0$ and a Lipschitz curve $\gamma \colon [0,T] \to \Omega$
such that $\gamma(0) = x$, $\gamma(1) = y$ and $\ell(\gamma) \leq 2 d(x,y)$.
For all $k \in \{ 1,\ldots,n-1 \} $ let 
\[
t_k = \min \{ t \in [0,T] : \ell(\gamma|_{[0,t]}) = \frac{k \, \ell(\gamma)}{n} \} 
,  \]
which exists by continuity.
Set $x_k = \gamma(t_k)$.
Further define $x_0 = x$ and $x_n = y$.
Then 
\[
|x_{k+1} - x_k| 
\leq d(x_{k+1},x_k)
\leq \frac{\ell(\gamma)}{n}
\leq \frac{2 d(x,y)}{n}
\leq \frac{2 c}{n} \, |x-y|
\]
for all $k \in \{ 0,\ldots,n-1 \} $, as required.
\end{proof}

\subsection*{Acknowledgements}
The authors wish to thank Wolfgang Arendt for many discussions at various stages
of this project.
The authors wish to thank Mourad Choulli for helpful comments
regarding the chain condition.

The first-named author is most grateful for the hospitality extended
to him during a fruitful stay at the University of Ulm.
He wishes to thank the University of Ulm for financial support. 
Part of this work is supported by an
NZ-EU IRSES counterpart fund and the Marsden Fund Council from Government funding,
administered by the Royal Society of New Zealand.
Part of this work is supported by the
EU Marie Curie IRSES program, project `AOS', No.~318910.

\end{document}